\numberwithin{equation}{section}
\newtheorem{theorem}{Theorem}[section]
\newtheorem{lemma}[theorem]{Lemma}
\newtheorem{prop}[theorem]{Proposition}
\newtheorem{cor}[theorem]{Corollary}
\newcommand{\defi}[1]{\textsf{#1}} 
\newcommand{\SL}{\operatorname{SL}}
\newcommand{\V}{{\mathcal V}}
\newcommand{\I}{\mathcal{I}}
\newcommand{\J}{\mathcal{J}}
\newcommand{\PP}{\mathbb{P}}
\newcommand{\NN}{\mathbb{N}}
\newcommand{\CC}{\mathbb{C}}
\newcommand{\ZZ}{\mathbb{Z}}
\newcommand{\Seg}{\operatorname{Seg}}
\def\bw#1{{\textstyle\bigwedge^{\hspace{-.2em}#1}}}
\def\o{ \otimes }
\def \P{\mathcal{P}}
\theoremstyle{definition}
\newtheorem{definition}[theorem]{Definition}
\newtheorem{example}[theorem]{Example}
\theoremstyle{remark}
\newtheorem{remark}[theorem]{Remark}
\newcommand{\sgn}{{\rm sgn}}
\newcommand{\mtx}[1]{\left(\begin{smallmatrix} #1 \end{smallmatrix}\right)}	
\newcommand{\ds}{\displaystyle}
\newcommand{\MS}{{\rm MS}}
\begin{document}
\author{Huajun Huang}\email{huanghu@auburn.edu}
\author{Luke Oeding}\email{oeding@auburn.edu}
\address{Department of Mathematics and Statistics,
Auburn University,
Auburn, AL, USA
}

\date{\today}

\title{Symmetrization of Principal Minors and Cycle-Sums}
\begin{abstract}
We solve the Symmetrized Principal Minor Assignment Problem, that is we show how to determine if for a given vector $v\in \CC^{n}$ there is an $n\times n$ matrix that has all $i\times i$ principal minors equal to $v_{i}$. 
We use a special isomorphism (a non-linear change of coordinates to cycle-sums) that simplifies computation and reveals hidden structure. We use the symmetries that preserve symmetrized principal minors and cycle-sums to treat 3 cases:  symmetric, skew-symmetric and general square matrices. 
We describe the matrices that have such symmetrized principal minors as well as the ideal of relations among symmetrized principal minors / cycle-sums.  
 We also connect the resulting algebraic varieties of symmetrized principal minors to tangential and secant varieties, and Eulerian polynomials. 
\end{abstract}
\maketitle

\section{Introduction}
The famous Principal Minor Assignment Problem (PMAP) asks to find a matrix, (or determine when one exists), with a prescribed set of values for its principal minors. 
Numerical solutions have been proposed in \cite{GriffTsat2} and in the case of symmetric matrices in \cite{Rising2014}.

To solve this problem algebraically, we would like to find a minimal generating set of the ideal of relations among the principal minors. The $4\times 4$ case was solved in \cite{BorodinRains, LinSturmfels}, whereas the $n\times n$ case for $n\geq 5$ is widely open. The case for symmetric matrices was solved for $n=3,4$ in \cite{HoltzSturmfels} and set-theoretically for all $n$ in \cite{Oeding_principal, oeding_thesis}. The ideal-theoretic version (in the symmetric case) for arbitrary $n$ remains open, however see \cite{KP14} for a recent approach using cluster relations to study principal minors and almost principal minors. 
  We also note that the question of finding relations among minors of a fixed size is also an interesting problem, but it is quite difficult (see \cite{BrunsConcaVarbaro}).
Grinshpan et al. \cite{Woerdeman_pm} studied the principal minor problem when all principal minors of a given size are equal (the \emph{symmetrized principal minors property}) in relation to the question of determinantal representations of multivariate polynomials.

In Section~\ref{sec:spm} we review principal minors and explain the type of symmetrization which leads to the ``principal minors of equal size are all equal'' condition.
In Section~\ref{sec:cycles} we explain the non-linear isomorphism on affine space to cycle-sum coordinates. It turns out that it is much easier to compute the ideals in which we are interested using cycle-sums. In addition, in cycle-sum coordinates, one of the embedded components in the ideal of symmetrized cycle-sums is a line. This structure is not apparent in principal minor coordinates.

In Section~\ref{sec:Matrices} we classify (up to diagonal and permutation conjugation symmetry) the matrices that have symmetrized principal minors and (equivalently) symmetrized cycle-sums.
We also provide a minimal parametrization of the respective varieties of symmetrized principal minors in the cases of symmetric, skew-symmetric and general square matrices.
In Section~\ref{sec:polynomials} we study the defining ideals of these varieties.

If $A$ has symmetrized cycle-sums, we use $c_{k}:= c_{k}(A)$ to denote the order-$k$ cycle-sum, and $d_{k}:=d_{k}(A)$ to denote the order-$k$ principal minor. Here is a summary of what we found:
\begin{theorem}\label{thm:Matrices}
Suppose $A \in \CC^{n\times n}$ has symmetrized principal minors and $n\geq 2$.
\begin{enumerate}
\item If $A$ is symmetric, then $A$ is conjugate to 
\[
\lambda \mathbbm{1}_{n} + \mu I_{n} , \quad \text{for} \;\; \lambda, \mu \in \CC,
\]
where $\mathbbm{1}_{n}$ denotes the $n\times n$ all-ones matrix. We have the following parameterizations:
\begin{eqnarray*}
d_{k}(\lambda \mathbbm{1}_{n} + \mu I_{n}) &=& \mu^{k-1}\cdot (\mu+k\cdot \lambda) 
\;,\\
c_{k}( \lambda \mathbbm{1}_{n} + \mu I_{n}) &=& (k-1)! \cdot \lambda^{k}
\;.\end{eqnarray*}
\item If $A$ is skew-symmetric, $A$ is conjugate to
\[
 \lambda \mathbbm{1}_{n}^{\wedge}\;,
\; \text{ or }\;\;
\lambda \mtx{
  0&  1&   1& 1\\
   -1&  0&  1&  -1\\
   -1&  -1&   0&  1\\
   -1&  1&  -1&  0\\
} \text{ (for } n=4 \text{ only)},
\qquad\text{for}\quad \lambda\in\CC,\]
where $\mathbbm{1}_{n}^{\wedge}$ denotes the $n\times n$ skew-symmetric matrix with 1's above the diagonal.
We have the following parameterizations:
\[
d_{k}(\mathbbm{1}_{n}^{\wedge}) = 1 \quad \text{for} \;\; k\geq 2 \text{ and }   k-even
,\]
\[
c_{k}(\mathbbm{1}_{n}^{\wedge}) = (-1)^{s/2}E_{k-1}, \quad \text{ where } E_{k} \text{ is the Euler number.}  
\]
\item If $A$ is general, then
\begin{enumerate}
\item If
$n\ge 3$, and $c_1=c_2 =0$,
then one of the following holds
\begin{enumerate}
\item 
$A$ is conjugate  to a strictly upper triangular matrix,
 where
\[c_1=c_2=\cdots=c_n=0.\]
\item 
$A$ is conjugate to a matrix representing an $n$-cycle and
\[c_1=c_2=\cdots=c_{n-1}=0,\qquad c_n\ne 0.\]
\end{enumerate}

\item 
If $c_2\ne 0$ and $c_{1}=c_3=0$, then $A$ is conjugate to a skew-symmetric  matrix with symmetrized principal minors.
\item If $c_1=0$, and $c_2c_{3}\ne 0$, then $A$ is  conjugate to $\lambda T_{n}(x)$, where $T_{n}(x)$ is  the following Toeplitz matrix for $x\in\CC^{*}$:
\[
T_n(x):=
\mtx{
      0 &1 &x &x^2 &\cdots &x^{n-2}
\\ -1 &0 &1 &x &\cdots &x^{n-3}
\\ -\frac{1}{x} &-1 &0 &1 &\cdots &x^{n-4}
\\ -\frac{1}{x^{2}} &-\frac{1}{x} &-1 &0 &\cdots &x^{n-5}
\\ \vdots &\vdots &\vdots &\vdots &\ddots &\vdots
\\ -\frac{1}{x^{n-2}} &-\frac{1}{x^{n-3}} &-\frac{1}{x^{n-4}} &-\frac{1}{x^{n-5}} &\cdots &0
},
\]
where the  $(i,j)$ entry of $T_n(x)$ is exactly $\sgn(j-i)\cdot x^{j-i-\sgn(j-i)}$. Moreover
    $\lambda^2=-c_2$ and $\lambda^3(x-\frac{1}{x})=c_3$, and
\[c_{s}(T_{n}(x)) \quad =\quad x^{-s} E_{s-1}(-x^{2}),\] where  $E_{n}(x)$ is the $n$-th Eulerian polynomial.

Also \quad $
d_{s} (T_{n}(x))
=
\frac{(x^{2})^{s-1}+(-1)^{s}}{x^{s-2}(x^{2}+1)}
$ so 
$
(x^{2}+1)d_{s}(x\cdot T_{n}(x)) =
  x^{2s}+(-1)^{s}x^{2}
  $.

\end{enumerate}
\end{enumerate}
\end{theorem}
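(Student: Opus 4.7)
The plan is to work in cycle-sum coordinates (Section~\ref{sec:cycles}), where the SPM condition becomes the statement that for each $k$, every $k$-element index set produces the same cycle-sum. Diagonal conjugation $A\mapsto DAD^{-1}$ preserves every individual principal minor (hence SPM), and permutation conjugation merely permutes the $k$-subsets; both restrict to symmetry-preserving moves when $D$ has entries $\pm 1$, so we freely normalize by these throughout.

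For the symmetric case, $d_1$ forces $a_{ii}$ constant and $d_2=a_{ii}^2-a_{ij}^2$ forces $a_{ij}=\pm\lambda$, while expanding a $3\times 3$ principal minor pins down the triple product $a_{ij}a_{jk}a_{ik}$ as a constant, so the signed graph on $K_n$ of off-diagonal signs is balanced and a $\pm 1$ diagonal switching converts $A$ to the stated form $\lambda\mathbbm{1}_n+\mu I_n$. The parameterizations of $d_k$ and $c_k$ follow from the observation that every $k\times k$ principal submatrix of $\lambda\mathbbm{1}_n+\mu I_n$ is of the same shape $\lambda\mathbbm{1}_k+\mu I_k$ (determinant $\mu^{k-1}(\mu+k\lambda)$) and from counting the $(k-1)!$ $k$-cycles on any $k$-subset. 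The skew-symmetric case is analogous: $a_{ii}=0$ and $a_{ij}^2=d_2$ give $a_{ij}=\pm\lambda$, and higher even-order principal-minor (Pfaffian) identities eliminate every sign-switching class except the all-$+$ class (giving $\lambda\mathbbm{1}_n^\wedge$), except for $n=4$ where a single extra class survives and, by finite enumeration, equals the displayed exceptional matrix. The formulas for $d_k(\mathbbm{1}_n^\wedge)$ and $c_k(\mathbbm{1}_n^\wedge)$ come from direct Pfaffian and cycle-sum computations, with the Euler-number identity arising from the bijection between $k$-cycles on $\{1,\ldots,k\}$ and alternating permutations.

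For the general case the triple $(c_1,c_2,c_3)$ drives the classification. When $c_1=c_2=0$, the $2$-cycle relation $a_{ij}a_{ji}=0$ forces the off-diagonal support to be an orientation of $K_n$ without $2$-cycles, and vanishing versus nonvanishing of $c_n$ separates the strictly upper-triangular case from the single-$n$-cycle case. When $c_1=c_3=0$ and $c_2\neq 0$, vanishing of every odd cycle-sum together with $c_2\neq 0$ forces $a_{ji}=-a_{ij}$ after a diagonal gauge, reducing to the skew-symmetric case. The central subcase $c_1=0$, $c_2c_3\neq 0$ fixes the Toeplitz form: the $2$-cycle constraint gives $a_{ij}a_{ji}=-\lambda^2$ (so $\lambda^2=-c_2$), the $3$-cycle constraint $a_{ij}a_{jk}a_{ki}+a_{ik}a_{kj}a_{ji}=c_3$ determines $x$ via $\lambda^3(x-x^{-1})=c_3$, and propagating these through all $k$-subsets using the higher cycle-sum equalities forces $a_{ij}=\lambda\sgn(j-i)x^{j-i-\sgn(j-i)}$ up to a diagonal gauge, i.e.\ $\lambda T_n(x)$. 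The formula $c_s(T_n(x))=x^{-s}E_{s-1}(-x^2)$ will then follow by enumerating $s$-cycles of $T_n$ according to their descent patterns in $\{1,\ldots,n\}$ (matching the recursion for the Eulerian polynomial), and the closed form for $d_s(T_n(x))$ by a first-row expansion or by the cycle-sum--to--minor conversion of Section~\ref{sec:cycles}.

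The hardest step will be subcase (3c): promoting the low-order cycle-sum constraints to the full Toeplitz pattern for every entry requires a careful induction that invokes the cycle-sum constraints on all $k$-subsets, and the identification of the answer through Eulerian polynomials is where the combinatorial content of the theorem is concentrated. The exceptional $n=4$ skew-symmetric class is a secondary difficulty, handled by finite classification of signed complete graphs on $4$ vertices up to vertex-switching and direct verification of the SPM condition on each class.
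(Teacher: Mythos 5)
Your plan follows essentially the same route as the paper's Section~\ref{sec:Matrices}: normalize by diagonal modification and ${\mathcal S}_n\ltimes{\mathcal D}_n$-conjugation, pin down the off-diagonal entries via the low-order symmetrized cycle-sums (the switching/balanced-signed-graph argument in the symmetric case, the sign-class elimination with the $n=4$ exception in the skew case), split the general case by the vanishing pattern of $(c_1,c_2,c_3)$, and build the Toeplitz form by induction, with the Eulerian-polynomial and Euler-number formulas coming from the descent/alternating-permutation enumeration of cycles. The only substantive point to keep in mind when fleshing this out is that in case (3a) you must still rule out a first nonvanishing cycle-sum $c_k$ with $3\le k<n$ (the paper does this by exhibiting a contradictory $(k-1)$-cycle), and that the paper's Theorems~\ref{thm:c_3=0} and \ref{thm:c_2!=0,c_3!=0} actually get by with only the $k=1,2,3$ symmetrization hypotheses, which is slightly sharper than your use of all higher cycle-sum equalities.
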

We prove Theorem~\ref{thm:Matrices}  and give the explicit conjugations (via permutation and diagonal matrices) in each separate case in Section~\ref{sec:Matrices}.
Note that Theorem~\ref{thm:Matrices} describes the pull-back of the symmetrization of principal minors (cycle-sums) conditions to matrices (SCS matrices). In particular, the set of SCS matrices is reducible and each component is the base of a parametrization of a (possibly) different component in the target. We find that each of the components of the source map to the same irreducible base.

Simply applying elimination to the ideal-theoretic problem produces an embedded scheme with non-reduced structure. Our methods show that the embedded component is supported on the line $\V(c_{2},\ldots,c_{n})$ and seems to have complicated scheme structure (apart from exceptional initial cases which we describe). An interesting avenue for future study would be to investigate the non-reduced structure. See Example~\ref{ex:J4} and Remark~\ref{rmk:structure}.

In Sections~\ref{sec:sym relations},~\ref{sec:skew relations}, and \ref{sec:nonsym relations} we respectively describe the ideals of these (geometric) components in the cases of symmetric, skew-symmetric, and general matrices. Here is a summary:

\begin{theorem}\label{thm:main2}
Let $Z_{n}$, (respectively $Z_{n}^{\circ}$ and $Z_{n}^{\wedge}$) denote the variety of cycle-sums of $n\times n$ general (respectively symmetric, skew-symmetric) matrices for $n\geq 3$. 
Let $\J_{n}$ (respectively $\J^{\circ}_{n}$, $\J^{\wedge}_{n}$) denote the ideal of the (set-theoretic) intersection of $Z_{n}$ (respectively $Z_{n}^{\circ}$ and $Z_{n}^{\wedge}$) and the linear space of symmetrized cycle-sums. 

\begin{enumerate}
\item
 If $n=3$ then $\J^{\circ}_{n}$ is the principal ideal
 \[
 \J^{\circ}_{3}=
 \langle -4 c_{2}^{3}+c_{3}^{2} \rangle 
 .\]
For $n\geq 4$ $\J^{\circ}_{n}$ is
the prime ideal generated by the following $n-2$ binomials:
\[
\left\{4 c_{2}^{3}-c_{3}^{2}\right\} \cup
\left\{ (s-1)!c_{2}c_{s-2} - (s-3)!c_{s} \;\mid\; 4\leq s\leq n
\right\}.
\]
\item  
$\J_{3}^{\wedge} = \langle c_{1},c_{3}\rangle$.
$\J^{\wedge}_{4}$ decomposes as the intersection of two prime components
\[\J^{\wedge}_{4} \;=\;
\langle
-2 c_{2}^{2}+c_{4}, c_{1},c_{3}
\rangle\quad \cap \quad \langle
6 c_{2}^{2}+c_{4},c_{1},c_{3}
 \rangle.\]
For $n\geq 5$, 
$\J^{\wedge}_n$ is 
the prime ideal generated by 
\[\{c_{2k-1}\mid 1\leq k \leq \lfloor n/2\rfloor\}
\cup
\{ E_{2(i+j)-1}c_{2i}c_{2j} -E_{2i-1}E_{2j-1} c_{2(i+j)} \mid 1\leq i \leq j \leq \left\lfloor \frac{n}{2} \right\rfloor, i+j \leq n
\}.\]
\item 
$\J_{3}$ is empty.
$\J_{4}$ decomposes as the intersection of two prime components:
\[\langle 2c_2^3+c_3^2-c_2c_4\rangle\qquad \text{and} \qquad \langle c_3,6c_2^2+c_4 \rangle.\]
When $n\geq 5$,
$\J_{n}$ is 
 the prime ideal generated by the maximal minors of
\[
\begin{pmatrix}
d_{0}       &  d_{1}       & d_{2} &\dots & d_{n-2}\\	
d_{1}       &  d_{2} &d_{3} &\dots &d_{n-1} \\
d_{2} & d_{3}& d_{4} &\dots & d_{n}\\
\end{pmatrix}.\]
\end{enumerate}
\end{theorem}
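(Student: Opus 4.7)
The plan is to handle each of the three cases by combining the explicit classifications in Theorem~\ref{thm:Matrices} with ideal-theoretic techniques. In each case I would first check by direct substitution that the proposed generators vanish on the relevant parametrization, and then argue that they generate the full prime ideal (or primary decomposition).

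For part (1), the parametrization $c_k=(k-1)!\,\lambda^k$ turns the vanishing of each listed binomial into a one-line check. For $n=3$, the principal generator $4c_2^3-c_3^2$ is irreducible and cuts out the image curve, giving the prime ideal directly. For $n\geq 4$, rescaling $\tilde c_k:=c_k/(k-1)!$ reduces the variety to the affine rational normal curve $\lambda\mapsto(\lambda^2,\ldots,\lambda^n)$, whose prime toric ideal is classically generated by the $2\times 2$ Hankel binomials $\tilde c_i\tilde c_j=\tilde c_k\tilde c_\ell$ with $i+j=k+\ell$. Reducing this generating set modulo the cubic $4\tilde c_2^3=\tilde c_3^2$ leaves the minimal generating set of $n-2$ binomials claimed, and primality is inherited from the toric ideal.

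For part (2), when $n\geq 5$ the only SCS skew-symmetric family in Theorem~\ref{thm:Matrices}(2) is $\lambda\mathbbm{1}_n^\wedge$, giving $c_{2k-1}=0$ and $c_{2k} = (-1)^k E_{2k-1}\lambda^{2k}$. The linear generators cover the odd coordinates, and an analogous rescaling of the even coordinates produces a monomial curve whose toric binomials are the stated Euler-number relations. For $n=4$ the exceptional matrix in Theorem~\ref{thm:Matrices}(2) contributes a second component; I would compute its cycle-sums directly to match $\langle 6c_2^2+c_4,c_1,c_3\rangle$, pair it with the generic component $\langle -2c_2^2+c_4,c_1,c_3\rangle$, and verify the intersection equals $\J_4^\wedge$ by a dimension/radicality check.

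For part (3), the case $n=3$ is trivial since the $3\times 2$ Hankel matrix has no $3\times 3$ minors. For $n=4$ I would compute the cycle-sums of each matrix family in Theorem~\ref{thm:Matrices}(3) to identify the two stated prime components. For $n\geq 5$ the parametrization $(x^2+1)d_s = x^s + (-1)^s x^{2-s}$ exhibits $(d_s)$ as a linear combination of two geometric sequences, forcing the $3\times(n-1)$ Hankel matrix to have rank $\leq 2$ and its maximal minors to vanish on the image. Primality and equality then follow from the classical result that the $3\times 3$ minors of a generic $3\times(n-1)$ Hankel catalecticant matrix generate the prime defining ideal of the rank-$\leq 2$ locus (the secant variety of a rational normal curve), matched by a dimension count against the three-parameter family $(\lambda,x,\mu)$ with $\mu$ a scalar shift. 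The main obstacle will be the $n=4$ exceptional decompositions in parts (2) and (3): confirming that the union of the several matrix families from Theorem~\ref{thm:Matrices} corresponds scheme-theoretically to the intersection of the listed prime ideals requires either an explicit Gr\"obner-basis computation or a careful dimension-plus-transversality argument, while for $n\geq 5$ the deep input is packaged in the classical monomial-curve and Hankel-catalecticant primality theorems.
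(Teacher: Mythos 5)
Your proposal follows essentially the same route as the paper: pull the symmetrization condition back through the classification of SCS matrices, parametrize each case ($c_k=(k-1)!\,\lambda^k$ in the symmetric case, the Euler-number curve in the skew case, and the two-geometric-series formula identifying the general case with the secant variety of the rational normal curve cut out by the catalecticant minors), settle $n=3,4$ by direct computation, and conclude primality and equality via classical facts plus a dimension count against the matrix parametrization. The one step to tighten is in part (1): the ideal of the monomial curve $(\lambda^2,\dots,\lambda^n)$ is not generated by quadratic Hankel binomials alone, and the claim that your $n-2$ binomials generate the full prime ideal needs an argument — the paper supplies it by noting that modulo these binomials every $c_s$ with $s\geq 4$ is eliminated, so the quotient is the coordinate ring of the plane curve $4c_2^3=c_3^2$ (a domain), and then comparing prime ideals of the same dimension.
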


Symmetrization of principal minors produces a scheme that may seem mysterious at first  
in principal minor coordinates. However, in cycle-sum coordinates one sees an embedded scheme supported on a line. Removing this embedded component, the geometry is revealed:

\begin{theorem}  Let $\varphi$ denote the projectivized principal minor map $\varphi\colon \CC^{n\times n} \oplus \CC \to \PP S^{n}\CC^{2}$.
\begin{enumerate}
\item For $n\geq 3$,
$\varphi( S^{2}\CC^{n}\oplus \CC) \cap S^{n}\CC^{2}  = \left(\tau \nu_{n}\PP^{1} \cap U_{d_{0}=1} \right)\cup \left( \mathcal{V}(c_{2},\dots,c_{n})\cap U_{c_{0}=1}\right)$;
 the tangential variety to the Veronese  and an embedded scheme supported on a line.

\item If $n=4$, then $\varphi( \bw{2}\CC^{4}\oplus \CC) \cap S^{4}\CC^{2}  = \nu_{4}\PP^{1}  \cap \mathcal{V}(d_{0}-1,d_{1},d_{3})$ together with the curve  $\{[(1,0,b,0,9b^{2})]\mid b\in \CC\}$ (in symmetrized principal minor coordinates).

\noindent If $n\geq 5$,  
$\varphi( \bw{2}\CC^{n}\oplus \CC) \cap S^{n}\CC^{2} = \nu_{n}\PP^{1}\cap \mathcal{V}(\{d_{0}-1\}\cup\{ d_{2k-1} \mid 1\leq k \leq n/2 \}  )$;  
a linear section of the Veronese variety.

\item If $n=4$, then  $\varphi(  \CC^{n\times n}\oplus \CC) \cap S^{n}\CC^{2}  = \sigma_{2}\nu_{4}\PP^{1} \cup  \{ [1,0,b,0,9b^{2}] \mid b\in \CC\}$. 

\noindent If $n\geq 4$, 
$\varphi( \CC^{n\times n}\oplus \CC) \cap S^{n}\CC^{2}  = \sigma_{2} \nu_{n}\PP^{1} \cap U_{d_{0}=1} \cup \mathcal{V}(c_{2},\dots,c_{n})\cap U_{c_{0}}=1$;
 the secant variety to the Veronese  and an embedded scheme supported on a line.
\end{enumerate}

\end{theorem}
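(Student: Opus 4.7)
The plan is to combine the classification of SCS matrices in Theorem \ref{thm:Matrices} with explicit identifications of the resulting parametrizations as classical rational varieties of binary forms — tangential, Veronese, and secant — respectively for the symmetric, skew-symmetric, and general cases. Throughout, I view $\PP S^{n}\CC^{2}$ as the space of binary $n$-forms via $[d_{0}:\cdots:d_{n}]\leftrightarrow\sum_{k}\binom{n}{k}d_{k}\, x^{k}y^{n-k}$, and treat the extra $\CC$ factor as the homogenization parameter normalizing $d_{0}$ (or dually $c_{0}$).

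For part (1), Theorem \ref{thm:Matrices}(1) reduces a symmetric SCS matrix to $\lambda\mathbbm{1}_{n}+\mu I_{n}$ up to conjugation, with $d_{k}=\mu^{k-1}(\mu+k\lambda)$. The identity
\[
(\mu x+y)^{n-1}\bigl((\mu+n\lambda)x+y\bigr)=\sum_{k=0}^{n}\binom{n}{k}\mu^{k-1}(\mu+k\lambda)\,x^{k}y^{n-k}
\]
exhibits the image as a binary form with a root of multiplicity exactly $n-1$, i.e.\ the generic point of the tangential variety $\tau\nu_{n}\PP^{1}$. As $(\lambda,\mu)$ varies, this fills out $\tau\nu_{n}\PP^{1}\cap U_{d_{0}=1}$. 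The additional piece $\V(c_{2},\ldots,c_{n})\cap U_{c_{0}=1}$ is read off from the primary structure of $\J^{\circ}_{n}$ established in Theorem \ref{thm:main2}(1), and accounts for portions of the image that become visible only in the $c_{0}=1$ affine chart.

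For part (2), Theorem \ref{thm:Matrices}(2) gives that for $n\ge 5$ the only skew-symmetric SCS class is $\lambda\mathbbm{1}_{n}^{\wedge}$, with $d_{k}=\lambda^{k}$ for even $k$ and $0$ for odd $k$; the parametrization traces out a Veronese-type curve in the linear slice $\V(d_{0}-1, d_{1}, d_{3},\ldots)$, and identifying this with $\nu_{n}\PP^{1}$ restricted to the slice is a direct calculation. For $n=4$, the exceptional matrix $M$ of Theorem \ref{thm:Matrices}(2) has Pfaffian $\mathrm{Pf}(M) = 1\cdot 1 - 1\cdot(-1)+ 1\cdot 1 = 3$, hence $\det M = 9$, so scaling by $\lambda$ yields $(d_{2},d_{4})=(\lambda^{2},\,9\lambda^{4})$, producing the extra curve $\{[1:0:b:0:9b^{2}]\}$ listed in the statement.

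For part (3), the Toeplitz parametrization from Theorem \ref{thm:Matrices}(3)(c) can be rewritten as
\[
d_{s}(T_{n}(x))=\tfrac{1}{x^{2}+1}\,x^{s}+\tfrac{x^{2}}{x^{2}+1}\,(-x^{-1})^{s},
\]
so that $\sum_{s}\binom{n}{s}d_{s}X^{s}Y^{n-s}=\tfrac{1}{x^{2}+1}(xX+Y)^{n}+\tfrac{x^{2}}{x^{2}+1}(-x^{-1}X+Y)^{n}$ is manifestly a sum of two $n$-th powers, hence on $\sigma_{2}\nu_{n}\PP^{1}$; scaling by $\lambda$ sweeps out the secant variety in $U_{d_{0}=1}$. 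The degenerate cases in Theorem \ref{thm:Matrices}(3)(a) (strictly upper-triangular and $n$-cycle) all have $c_{2}=\cdots=c_{n}=0$ and so contribute the embedded line $\V(c_{2},\ldots,c_{n})\cap U_{c_{0}=1}$; in dimension four the exceptional-matrix curve from part (2) is additionally inherited. The main obstacle will be scheme-theoretic rather than set-theoretic: while the set-level equalities follow from the parametrizations above, describing the embedded component supported on the cycle-sum line requires invoking the ideal decompositions of Theorem \ref{thm:main2}, and one must separately verify that no further embedded primes appear.
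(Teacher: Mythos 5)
Parts (1) and (2) of your proposal are essentially sound, and in part (1) your explicit identity $(\mu x+y)^{n-1}((\mu+n\lambda)x+y)=\sum_k\binom{n}{k}\mu^{k-1}(\mu+k\lambda)x^ky^{n-k}$ is a nice, self-contained way to see the tangential variety directly as the locus of binary forms with an $(n-1)$-fold root; the paper instead obtains this by symmetrizing the identification of principal minors of E-rank-one symmetric matrices with $\tau(\Seg(\PP^1\times\cdots\times\PP^1))$ from earlier work, and it locates the embedded line from the $b=0$ stratum in the proof of Theorem~\ref{thm:sym} (not from Theorem~\ref{thm:main2}(1), which only records the reduced/prime part). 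Your $n=4$ Pfaffian computation giving the curve $\{[1,0,b,0,9b^2]\}$ agrees with Example~\ref{ex:n4}.

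Part (3), however, has a genuine gap: you parametrize only by $\lambda T_n(x)$, i.e.\ a two-parameter family, and every matrix in it has $c_1=0$, hence $d_1=0$. Its image therefore lies in the hyperplane slice $\sigma_2\nu_n\PP^1\cap\{d_1=0\}$ and is at most $2$-dimensional, whereas $\sigma_2\nu_n\PP^1\cap U_{d_0=1}$ is $3$-dimensional; so ``scaling by $\lambda$'' cannot sweep out the secant chart. The missing ingredient is the diagonal modification suppressed by the normalization $c_1=0$ in Theorem~\ref{thm:Matrices}(3): the general SCS matrix is (up to conjugation) $\lambda T_n(x)+\mu I_n$, and it is exactly the extra parameter $\mu$ (equivalently, a free $d_1$) that supplies the third dimension. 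This is how the paper argues: it computes $d_k$ of the shifted Toeplitz matrix via the diagonal-modification formula, obtaining $d_k=\bigl((x^2+y)^k+(y-1)^kx^2\bigr)/(1+x^2)$, a sum of two $n$-th-power points with varying roots, and only then concludes density in the secant chart. A second, smaller slip: you file the $n$-cycle matrices of Theorem~\ref{thm:Matrices}(3)(a) under the line $\V(c_2,\dots,c_n)$, but these have $c_n\neq 0$; their minor vectors are $[1,0,\dots,0,z]$, and the paper handles them separately by checking that such points satisfy the catalecticant (secant) equations. Only the strictly upper-triangular stratum contributes the component supported on $\V(c_2,\dots,c_n)$.
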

\begin{remark}
  See \cite{OedingRaicu, RaicuGSS} for the ideals of tangential and chordal varieties to Segre-Veronese varieties in full generality. 
\end{remark}
\begin{remark}
Our results also provide insights into the general PMAP. In particular, by casting this problem as a symmetrization of the PMAP, we know that the solution to the general PMAP must symmetrize to the solution we provide in this work. For example, in the case of $3\times 3$ symmetric matrices the variety of principal minors is isomorphic to the tangential variety of the Segre product $ \tau(\Seg(\PP^{1}\times \PP^{1}\times \PP^{1}))$, which symmetrizes to the tangential variety of the Veronese variety $\tau(\nu_{3}\PP^{1})$, the main component of the variety symmetrized principal minors of symmetric matrices.
\end{remark}
\section{Principal minors, cycle-sums, and symmetrization}\label{sec:spm}
\subsection{Principal minors and symmetrized principal minors}
Let $A = (a_{i,j})$ be an $n\times n$ matrix. For $S\in \mathcal{P} (n)$ (the set of subsets of $[n]$), let $D_{S}(A)$ denote the principal minor of $A$ with row and column set $S$. The functions $\{D_{S} \mid S \in \P(n)\}$ furnish \emph{principal minor coordinates} on $\CC^{\P(n)}$.
Consider the principal minor map:
\begin{eqnarray*}
\phi\colon \CC^{n\times n} &\to& \CC^{\P(n)}\\
 A &\mapsto& (D_{S}(A))_{S\in \mathcal{P}(n)}
, \end{eqnarray*}
where we may assume $D_{\emptyset}(A)=1$. Let $Z_{n}$ denote the image $\phi(\CC^{n\times n})$, the \emph{variety of principal minors of $n\times n$ matrices}, which is closed by \cite[Thm.~1]{LinSturmfels}.

 We will also restrict the domain to symmetric matrices ($S^{2}\CC^{n}$) and skew-symmetric matrices ($\bw{2}\CC^{n}$). Let $Z^{\circ}_{n}:=\phi(S^{2}\CC^{n})$, and $Z^{\wedge}_{n}:=\phi(\bw{2}\CC^{n})$. The reader may wish to consult \cite{LandsbergTensorBook} for an introduction to tensors from a geometric viewpoint.

We may identify the space $\CC^{\mathcal{P}(n)}$ with $\CC^{2}\o\dots\o\CC^{2} = \CC^{2^{n}}$, which reflects the $\SL(2)^{\times n}\rtimes \mathfrak{S}_{n}$ symmetry of the target space that preserves the projective variety parametrized by $\phi$, (see \cite[Theorem~1.1]{Oeding_principal} or \cite[Theorem 15]{HoltzSturmfels}.) 
Now consider the subspace $\CC^{n+1}$ in $\CC^{\P(n)}$ defined by the condition that $D_{S} = D_{S'}$ for all $S,S' \subset\P(n)$ such that $|S| = |S'|$, and let $d_{k}$ denote the value of $D_{S}$ when $|S|=k$. Viewed as a subspace of $\CC^{2}\o\cdots\o\CC^{2} = \CC^{2^{n}}$, we see that this copy of $\CC^{n+1}$ is naturally isomorphic to $S^{n}\CC^{2}$, the space of fully symmetric $2\times \dots \times 2$ tensors. Note that $S^{n}\CC^{2}$ is naturally an $\SL(2)$-module (and a trivial $\mathfrak{S}_{n}$-module), and the copy of $\SL(2)$ acting on $S^{n}\CC^{2}$ is the diagonal copy in $\SL(2)^{\times n}$ acting on $(\CC^{2})^{\o n}$. We will let $\{d_{i} \mid 0\leq i \leq n\}$ denote \emph{symmetrized principal minor coordinates} on $S^{n}\CC^{2}$.

The \emph{variety of symmetrized principal minors}, denoted $Y_{n}$, is the variety of principal minors of $n\times n$ matrices whose principal minors of equal size have the same value. Geometrically we have the intersection 
$
Y_{n}:= Z_{n} \cap S^{n}\CC^{2}
.$
Analogously define $Y^{\circ}_{n}:=Z^{\circ}_{n}\cap S^{n}\CC^{2}$ and $Y^{\wedge}_{n}:=Z^{\wedge}_{n}\cap S^{n}\CC^{2}$ respectively in the skew-symmetric and symmetric cases.

This symmetrization process was studied in the context of hyperdeterminants in \cite{Oeding_hyperdet}, and the first example of symmetrization of principal minors happens to coincide with the first example of the symmetrization of hyperdeterminants: 

\begin{example}\label{ex:discrim}
Consider the case of $3\times 3$ symmetric matrices. Holtz and Sturmfels showed that $Z^{\circ}_{3}$ is a hypersurface defined by  Cayley's $2\times 2\times 2$ hyperdeterminant 
\[\begin{smallmatrix} 
Det\; =\; D_{\emptyset}^{2}{D}_{\{1,2,3\}}^{2}
+{D}_{\{1\}}^{2} {D}_{\{2,3\}}^{2}     +{D}_{\{2\}}^{2} {D}_{\{1,3\}}^{2}+ {D}_{\{3\}}^{2}{D}_{\{1,2\}}^{2}  
+4 \left(
{D}_{\{1\}} {D}_{\{2\}} {D}_{\{3\}} {D}_{\{1,2,3\}}   + D_{\emptyset}{D}_{\{1,2\}} {D}_{\{1,3\}} {D}_{\{2,3\}} \right)
\\
-2\left(\begin{smallmatrix}
   {D}_{\{1\}} {D}_{\{2\}} {D}_{\{1,3\}} {D}_{\{2,3\}}   
+ {D}_{\{1\}} {D}_{\{1,2\}} {D}_{\{3\}} {D}_{\{2,3\}}
+ {D}_{\{2\}} {D}_{\{1,2\}} {D}_{\{3\}} {D}_{\{1,3\}}
\\
+ D_{\emptyset}{D}_{\{1\}} {D}_{\{2,3\}} {D}_{\{1,2,3\}}
+ D_{\emptyset}{D}_{\{2\}} {D}_{\{1,3\}} {D}_{\{1,2,3\}}
+ D_{\emptyset}{D}_{\{1,2\}} {D}_{\{3\}} {D}_{\{1,2,3\}}
\end{smallmatrix} \right)
.\end{smallmatrix}
\]
The symmetrization of the $2\times 2\times 2$ hyperdeterminant (setting $D_{S} = d_{|S|}$ and $D_{\emptyset} = 1$),
 \[
SDet = -3 {d}_{1}^{2} {d}_{2}^{2}+4 {d}_{1}^{3} {d}_{3}+4 {d}_{2}^{3}-6 {d}_{1} {d}_{2} {d}_{3}+{d}_{3}^{2}
 ,\]
  is the discriminant of the cubic $
1+ 3d_{1}x + 3d_{2}x^{2} + d_{3}x^{3}
$, (see \cite[Sec.~3.6]{SturmfelsAlgorithms}).
The ideal of $Y^{\circ}_{3}$ is minimally generated by $SDet$. 
In cycle-sums $C_{I}$ (see Def.~\ref{def:cycle-sums}) the $2\times 2\times 2$ hyperdeterminant is
\[
Det =  -4 C_{\{1,2\}} C_{\{1,3\}} C_{\{2,3\}}+C_{\{1,2,3\}}^{2}
,\]
which is the same formula as \cite[Sec. 2, eq. (8)]{SturmfelsZwiernik} since, in this case, cycle-sums correspond to binary cumulants.  In symmetrized cycle-sums, the symmetrized hyperdeterminant becomes 
\[
SDet = -4c_{2}^{3} + c_{3}^{2}
,\]
and since
\[c_2 \;=\; d_1^2-d_2,  \qquad c_3 \;=\; 2d_1^3-3d_1d_2+d_3,\]
and the fact that we have set the constant term equal to $1$,  $SDet$ is the same expression as the syzygy amongst the covariants of the binary cubic.
\end{example}

\subsection{Cycle-sums and symmetrized cycle-sums}\label{sec:cycles}
We are interested in studying the relations among principal minors of different types of matrices. In this section, we will explain a special non-linear change of coordinates (to cycle-sums) that simplifies the relations. 
The connection between cycle-sums and principal minors is illuminated by the combinatorics governed by the underlying geometric lattice and its M\"obius function, whose properties are well-explained in Stanley's book \cite[Ch.~3, Ex. 3.10.4]{Stanley}.  Stanley's historical notes attribute these results to independent discoveries by Sch\"utzenberger, and Rota and Frucht.

The idea to look at cycle-sums in their connection to the relations amongst principal minors appeared previously in the work of Lin and Sturmfels \cite{LinSturmfels}, 
 and has the same theme as some work of Rota \cite{RotaCumulants}. Sturmfels and Zwiernik's work on binary cumulants (see Ex.~\ref{ex:discrim}) showed, in particular, that the $2\times 2\times 2$ hyperdeterminant is a binomial in cumulant coordinates \cite{SturmfelsZwiernik}. 
Michalek, Zwiernik, and the second author introduced \emph{secant cumulants}, which reveal toric structure on the secant and tangential varieties to the Segre variety \cite{MichalekOedingZwiernik}. Manivel and Michalek used similar methods to study minuscule and cominuscule varieties, \cite{ManivelMichalek}.

\begin{definition}\label{def:cycle-sums}
For $A \in \CC^{n\times n}$ and $I\subset [n]$ the \defi{cycle-sum} $C_{I}$ is defined by the following:
\[
C_{I}(A) := \sum_{\{i_{1},\dots,i_{k}\}=I,\quad i_{1}=\min I } a_{i_{1},i_{2}} a_{i_{2},i_{3}}\cdots a_{i_{k-1},i_{k}}a_{i_{k},i_{1}}.
\]
Note if $i_{1} = \min I$, the sum is over all permutations of $\{i_{2},\dots,i_{k}\}$. 
We will set $C_{\emptyset}(A)=1$.
\end{definition}

\begin{example}The first few cycle-sums are the following.
\[\begin{matrix}
C_{\emptyset}(A) &=& 1,\hfill\\
C_{\{1\}}(A) &=& a_{1,1} ,\hfill\\
C_{\{1,2\}}(A) &=& a_{1,2}a_{2,1} ,\hfill\\
C_{\{1,2,3\}}(A) &=& a_{1,2}a_{2,3}a_{3,1} + a_{1,3}a_{3,2}a_{2,1},\hfill\\
C_{\{1,2,3,4\}}(A) &=&
\begin{smallmatrix}
  a_{1,2}a_{2,3}a_{3,4}a_{4,1} + a_{1,3}a_{3,2}a_{2,4}a_{4,1} + a_{1,4}a_{4,2}a_{2,3}a_{3,1}  
 +a_{1,2}a_{2,4}a_{4,3}a_{3,1} + a_{1,3}a_{3,4}a_{4,2}a_{2,1} + a_{1,4}a_{4,3}a_{3,2}a_{2,1}
\end{smallmatrix}
\end{matrix}\]
\end{example}

The functions $\{C_{S} \mid S\in \P(n)\}$ furnish  \defi{cycle-sum coordinates} on $\CC^{\P(n)}$. We define the cycle-sum map 
\begin{eqnarray*}
\varphi\colon\CC^{n\times n} &\to& \CC^{\P(n)}\\
 A &\mapsto& ( C_{S}(A))_{S\in \mathcal{P}(n)}
\end{eqnarray*}
Let $X_{n}$ denote the image $\varphi(\CC^{n\times n})$, the variety of cycle-sums of $n\times n$ matrices, and similarly define $X^{\circ}_{n}$ and $X^{\wedge}_{n}$ to be the analogous varieties of cycle-sums of symmetric and skew-symmetric $n\times n$ matrices.

\begin{prop}
Suppose $A\in \CC^{n\times n}$ is such that $D_{I}(A) = D_{J}(A)$ whenever $|I|= |J|$. Then $C_{I}(A) = C_{J}(A)$ whenever $|I|= |J|$.
\end{prop}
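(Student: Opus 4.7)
The plan is to show that cycle-sums can be written as polynomial expressions in the principal minors in a way that is invariant under relabeling of the index set, so that the symmetrization hypothesis on $D_{I}(A)$ forces the same symmetrization on $C_{I}(A)$.

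First I would expand the principal minor using the Leibniz permutation sum,
\[
D_{I}(A) = \sum_{\sigma \in \mathfrak{S}_{I}} \sgn(\sigma) \prod_{i \in I} a_{i,\sigma(i)},
\]
and then group the terms by the cycle decomposition of $\sigma$. A permutation $\sigma$ of $I$ whose cycles have supports $B_{1}, \dots, B_{k}$ (a set partition $\pi$ of $I$) has sign $(-1)^{|I|-|\pi|}$, and the product over $i \in I$ factors across the cycles. Each block $B_{j}$ contributes the sum over all cyclic orderings of $B_{j}$, which is precisely $C_{B_{j}}(A)$. This yields the identity
\[
D_{I}(A) \;=\; \sum_{\pi \vdash I} (-1)^{|I|-|\pi|} \prod_{B \in \pi} C_{B}(A),
\]
where the outer sum ranges over set partitions of $I$.

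Next I would invert this relation. Isolating the term $\pi=\{I\}$ (which contributes $(-1)^{|I|-1}C_{I}(A)$) gives the recursion
\[
C_{I}(A) \;=\; (-1)^{|I|-1} D_{I}(A) \;-\; \sum_{\substack{\pi \vdash I \\ \pi \neq \{I\}}} (-1)^{|\pi|-1} \prod_{B \in \pi} C_{B}(A),
\]
which one can also view as Möbius inversion on the lattice of set partitions of $I$. The essential combinatorial point is that the coefficient structure appearing in this recursion is \emph{universal}: it depends only on the partition type of $\pi$ (i.e.\ the multiset of block sizes), not on which specific elements of $[n]$ appear in the blocks.

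I would then proceed by induction on $|I|$. For $|I|=1$ we have $C_{\{i\}}(A) = a_{i,i} = D_{\{i\}}(A)$, so the symmetrization of principal minors immediately forces $C_{\{i\}}(A)$ to depend only on the size. For the inductive step, assume $C_{B}(A)$ depends only on $|B|$ for all $B \subsetneq I$. Under the hypothesis that $D_{B}(A)$ depends only on $|B|$, every $\prod_{B \in \pi} C_{B}(A)$ with $\pi \neq \{I\}$ depends only on the partition type of $\pi$. Grouping the sum in the recursion by partition type and using that the number of set partitions of an $m$-element set with a prescribed type $(a_{1},a_{2},\ldots)$ is $m!/\prod_{j}(j!)^{a_{j}}a_{j}!$, which depends only on $m$, the entire right-hand side depends only on $|I|$. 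This forces $C_{I}(A)$ to depend only on $|I|$, completing the induction. I do not anticipate any serious obstacle beyond verifying the cycle-sign accounting in the initial determinant expansion, since the combinatorial structure is classical and the induction is straightforward once the universality of the coefficients is observed.
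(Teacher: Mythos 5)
Your proof is correct and is essentially the paper's intended argument: the paper simply says ``proof by (easy) induction,'' and your induction on $|I|$, using the transition $D_{I}=\sum_{\pi}(-1)^{|I|-|\pi|}\prod_{B\in\pi}C_{B}$ (which the paper records as the Lin--Sturmfels isomorphism) together with the fact that its coefficients depend only on the partition type, fills in exactly that argument. The sign bookkeeping and the universality observation you highlight are both accurate, so no changes are needed.
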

\begin{proof}
Proof by (easy) induction.
\end{proof}
We also have the following useful fact:
\begin{prop}[{\cite[Cor.~5]{LinSturmfels}}]
A vector $u_{*} \in \CC^{2^{n}}$ is realizable as the principal minors of an $n\times n$ matrix if and only if the corresponding vector under the isomorphism in Prop.~ \ref{thm:d_S=c_S} is realizable as the cycle-sums of an $n\times n$ matrix.
\end{prop}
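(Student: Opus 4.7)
The plan is to deduce the statement formally from Proposition~\ref{thm:d_S=c_S}, which supplies an explicit (polynomial, bijective) change of coordinates on $\CC^{\P(n)}$ converting principal minor coordinates $\{D_S\}$ into cycle-sum coordinates $\{C_S\}$. Denote this isomorphism by $\Phi\colon \CC^{\P(n)}\to\CC^{\P(n)}$. The key property we shall use is that $\Phi$ is defined at the level of coordinates in such a way that, for every matrix $A\in\CC^{n\times n}$, one has the identity $\varphi(A)=\Phi(\phi(A))$. Thus the two parametrizations $\phi$ and $\varphi$ differ only by precomposition with an invertible global transformation of the ambient affine space.

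First I would establish the forward implication. Suppose $u_{*}=\phi(A)$ for some $n\times n$ matrix $A$. Applying $\Phi$ gives
\[\Phi(u_{*})\;=\;\Phi(\phi(A))\;=\;\varphi(A),\]
so $\Phi(u_{*})$ is realized as the cycle-sums of the very same matrix $A$. This direction does not need bijectivity of $\Phi$; it only uses the commutativity of the diagram relating $\phi$, $\varphi$, and $\Phi$.

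For the converse, assume $\Phi(u_{*})=\varphi(B)$ for some $B\in\CC^{n\times n}$. Then $\Phi(u_{*})=\Phi(\phi(B))$, and bijectivity of $\Phi$ on all of $\CC^{\P(n)}$ yields $u_{*}=\phi(B)$, exhibiting $B$ as a matrix whose principal minors are $u_{*}$.

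The main obstacle, such as there is one, is to certify that $\Phi$ is a genuine bijection on the entire ambient space and not merely an isomorphism between $Z_{n}$ and $X_{n}$. This is ensured by the triangular structure of the formulas expressing each $C_{S}$ as a polynomial in $\{D_{T}\mid T\subseteq S\}$ (and vice versa) with respect to the partial order by inclusion, with unit leading coefficients; the M\"obius-inversion formalism recalled from Stanley~\cite[Ch.~3]{Stanley} and already invoked in Proposition~\ref{thm:d_S=c_S} shows that $\Phi$ has a polynomial inverse, so the transformation is defined and bijective everywhere on $\CC^{\P(n)}$. Once this is in hand, the proposition is a purely formal consequence, in agreement with \cite[Cor.~5]{LinSturmfels}.
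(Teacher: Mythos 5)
Your proposal is correct and is exactly the standard deduction: the paper itself delegates the proof to \cite[Cor.~5]{LinSturmfels}, where the statement is obtained from the ring isomorphism of Prop.~\ref{thm:d_S=c_S} in just the way you describe, using that the transition formulas hold as identities of functions on matrices (Leibniz's formula) so that $\varphi=\Phi\circ\phi$, and that the triangular Möbius-inversion structure makes $\Phi$ a polynomial automorphism of the ambient affine space (with the $\emptyset$-coordinate fixed at $1$). No gaps.
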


\subsection{Transition between principal minors and cycle-sums} 
\begin{example}\label{ex-cd-[4]}By direct calculation one can find the transitions between $D_{S}$ and $C_{S}$ coordinates. For instance
 $D_{\{1,2,3,4\}}$ and $C_{\{1,2,3,4\}}$ transform as follows:
\[\begin{smallmatrix}
 D_{\{1,2,3,4\}} &=& -C_{\{1,2,3,4\}}
 ,\hfill\\
&& +C_{\{1,2,3\}}C_{\{4\}}+C_{\{1,2,4\}}C_{\{3\}}+C_{\{1,3,4\}}C_{\{2\}}+C_{\{2,3,4\}}C_{\{1\}}
 ,\hfill\\
&& +C_{\{1,2\}}C_{\{3,4\}}+C_{\{1,3\}}C_{\{2,4\}}+C_{\{1,4\}}C_{\{2,3\}}
 ,\hfill\\
&& -\left(
C_{\{1,2\}}C_{\{3\}}C_{\{4\}}+C_{\{1,3\}}C_{\{2\}}C_{\{4\}}+C_{\{1,4\}}C_{\{2\}}C_{\{3\}} 
+C_{\{2,3\}}C_{\{1\}}C_{\{4\}}+C_{\{2,4\}}C_{\{1\}}C_{\{3\}}+C_{\{3,4\}}C_{\{1\}}C_{\{2\}} \right)
 ,\hfill\\
&& +C_{\{1\}}C_{\{2\}}C_{\{3\}}C_{\{4\}}
\;, \hfill\\
 C_{\{1,2,3,4\}} &=& -D_{\{1,2,3,4\}}
 ,\hfill\\
&& +D_{\{1,2,3\}}D_{\{4\}}+D_{\{1,2,4\}}D_{\{3\}}+D_{\{1,3,4\}}D_{\{2\}}+D_{\{2,3,4\}}D_{\{1\}}
 ,\hfill\\
&& +D_{\{1,2\}}D_{\{3,4\}}+D_{\{1,3\}}D_{\{2,4\}}+D_{\{1,4\}}D_{\{2,3\}}
 ,\hfill\\
&& 
-2\left(
D_{\{1,2\}}D_{\{3\}}D_{\{4\}}+D_{\{1,3\}}D_{\{2\}}D_{\{4\}}+D_{\{1,4\}}D_{\{2\}}D_{\{3\}}
D_{\{2,3\}}D_{\{1\}}D_{\{4\}}+D_{\{2,4\}}D_{\{1\}}D_{\{3\}}+D_{\{3,4\}}D_{\{1\}}D_{\{2\}}
\right)
 ,\hfill\\
&& +6D_{\{1\}}D_{\{2\}}D_{\{3\}}D_{\{4\}}. \hfill 
\end{smallmatrix}
\]

\end{example}

In this section, we give general formulas for the transition between principal minor coordinates and cycle-sum coordinates. The key, like in the case of cumulants \cite[Ch.~4]{ZwiernikBook}, is to notice that our coordinates are indexed by the elements of a nice poset, whose M\"obius function and rank functions we know and can use for the changes of coordinates. 
We follow Stanley's notation, and \cite[Sec.~3.7, Sec.~3.10]{Stanley}, especially  \cite[Example~3.10.4]{Stanley}. Recall that if $S\subset [n]$, the set of all \defi{set-partitions} of $S$ is
\[
\Pi_S:=\left\{\{S_{1},S_{2},\cdots,S_{k}\}\mid k\in\NN,\ S_{1}\sqcup S_{2}\sqcup\cdots\sqcup S_{k}=S,\ S_{i}\ne\emptyset \text{ for } i\in [k]\right\}.
\] 
Abbreviate the partition $\{S_{1},S_{2},\cdots,S_{k}\}$  as $S_1 S_2\cdots S_k$. 
The set-partitions on a set $S$, denoted  $\Pi_{S}$,  are partially ordered by refinement $\succeq$.
The poset $\Pi_{S}$ is a lattice, with rank
\[
\rho(S_1 S_2\cdots S_k):= (|S_1|-1)+(|S_2|-1)+\cdots +(|S_k|-1)=|S|-k.
\]
and sign
\[
\sgn(S_1 S_2\cdots S_k):= (-1)^{\rho(S_1 S_2\cdots S_k)}
.\]
When $S=\{i_{1},i_{2},\cdots,i_{s}\}$, $\Pi_S$ has unique maximal and minimal elements $S$ and $\{i_1\}\{i_2\}\cdots \{i_s\}$ respectively.

Analogously, let  $\Pi_{s}$ denote the poset of all partitions of $s\in\ZZ^+$ ordered by refinement: 
The elements of $\Pi_{s}$ may be expressed as $\alpha=a_1^{t_1} a_2^{t_2}\cdots a_{\ell}^{t_\ell}$, where $a_i, t_i\in\ZZ^+$, $a_1>a_2>\cdots>a_{\ell}$,  and $\sum_{i=1}^{\ell} {t_ia_i}=s$. Let $\# \alpha := \sum_{i}t_{i}$ denote the number of parts of $\alpha$.
Then $s^1$ is the maximal element, and $1^s$ is the minimal element, of $\Pi_{s}$ respectively. 
The rank and sign of $\alpha\in\Pi_{s}$  are
\[\rho(\alpha) := s - \# \alpha, \quad
\sgn(\alpha):=(-1)^{\rho(\#\alpha)}.\]
For $S_{1} S_{2}\cdots  S_{k}\in\Pi_S$ , \defi{shape} is the partition
\[
|S_1 S_2\cdots S_k|:=\{|S_1|,|S_2|,\cdots,|S_k|\} \in \Pi_{|S|}=\Pi_{s}.
\] 
The \defi{type} of the partition $S_{1} S_{2}\cdots  S_{k}$ is $(m_{1},\ldots, m_{s})$, where $m_{i}$ is the number of blocks of size $i$ for $1\leq i\leq s$.
For $\alpha\in\Pi_{s}$, let $p_{\alpha}$ denote the number of set-partitions of $S$ of the same type as $\alpha$.  As recorded in \cite[Eqn.~3.36]{Stanley} we have
\begin{equation}\label{palpha}
p_{\alpha} = \frac{s!}{1!^{m_{1}} m_{1}! 2!^{m_{2}} m_{2}! \cdots s!^{m_{s}} m_{s}!}.
\end{equation}
The lattice of set-partitions $\Pi_{s}$ has M\"obius function determined by \cite[Eq.3.37]{Stanley} 
\[
\mu_{s} =  (-1)^{s-1}(s-1)!.
\]

\noindent The key observation is the following isomorphism of coordinate rings. 

\begin{prop}[{\cite[Prop.~4]{LinSturmfels}}]
\label{thm:d_S=c_S}
 Fix $n\in\ZZ^+$.
Consider rings $R_C = \CC\left[C_{S}\mid S\in \P(n)\right]$ and $R_{D} = \CC\left[D_{S}\mid S\in \P(n)\right]$. We have a (lower triangular) non-linear isomorphism of rings
$R_{D} \to R_C$
given by $D_{\emptyset}=1$ and
\begin{equation}\label{d_S=c_S}
D_{S}=\sum_{ S_{1} S_{2} \cdots S_{k}\in\Pi_S} (-1)^{|S|-k}\; C_{S_{1}}C_{S_{2}}\cdots C_{S_{k}}.
\end{equation}
Conversely, we  have a (lower triangular) non-linear isomorphism of rings
$R_C \to R_{D}$
given by 
\begin{equation}\label{c_S=d_S}
C_{S}=\sum_{S_{1} S_{2} \cdots S_{k}\in\Pi_S}  (-1)^{|S|-k}(k-1)!\;  D_{S_{1}}D_{S_{2}}\cdots D_{S_{k}}
.\end{equation}
\end{prop}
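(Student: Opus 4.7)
The plan is to establish the direct formula~\eqref{d_S=c_S} via the Leibniz expansion of the determinant, and then to obtain the inverse formula~\eqref{c_S=d_S} by M\"obius inversion on the set-partition lattice $\Pi_S$. For~\eqref{d_S=c_S}, I would expand
\[
D_S \;=\; \det(a_{p,q})_{p,q\in S} \;=\; \sum_{\sigma\in\mathfrak{S}_S}\sgn(\sigma)\prod_{p\in S}a_{p,\sigma(p)}
\]
and group permutations by their cycle type. Each $\sigma\in\mathfrak{S}_S$ induces a set-partition $\pi(\sigma)\in\Pi_S$ via its cycle decomposition, and summing $\prod_p a_{p,\sigma(p)}$ over all $\sigma$ with a fixed $\pi(\sigma)=S_1S_2\cdots S_k$ produces exactly $\prod_j C_{S_j}$: by Definition~\ref{def:cycle-sums}, $C_{S_j}$ is the sum of the cyclic words on the block $S_j$, and the restriction of $\sigma$ to any one of its cycles is precisely such a cyclic word. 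Since $\sgn(\sigma)=(-1)^{|S|-k}$ when $\sigma$ has $k$ cycles, regrouping yields~\eqref{d_S=c_S}.

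For~\eqref{c_S=d_S}, I would invoke M\"obius inversion on $\Pi_S$, whose M\"obius function satisfies $\mu_{\Pi_s}(\hat 0,\hat 1)=(-1)^{s-1}(s-1)!$. A clean way to absorb the signs is to set $\bar D_S:=(-1)^{|S|}D_S$ and $\bar C_S:=-C_S$, so that using $(-1)^{|S|-k}=(-1)^{|S|}(-1)^{k}$ distributes the factor $(-1)^k$ across the $k$ blocks and recasts~\eqref{d_S=c_S} into the classical moment--cumulant form
\[
\bar D_S \;=\; \sum_{\pi\in\Pi_S}\prod_{i}\bar C_{B_i}.
\]
Partition-lattice M\"obius inversion (equivalently, the compositional inverse of the exponential formula for species) then gives
\[
\bar C_S \;=\; \sum_{\pi\in\Pi_S}(-1)^{k-1}(k-1)!\prod_{i}\bar D_{B_i},
\]
and translating back to the unbarred variables via $D_{B_i}=(-1)^{|B_i|}\bar D_{B_i}$ and $\sum_i|B_i|=|S|$ recovers~\eqref{c_S=d_S}.

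Finally, to upgrade these two formulas to mutually inverse ring isomorphisms $R_D\leftrightarrow R_C$, I would observe that both identities are triangular in $|S|$: the sum over $\pi\in\Pi_S$ involves only indices $T\subsetneq S$ together with the one-block partition $\pi=\{S\}$, which contributes $(-1)^{|S|-1}C_S$ (respectively $(-1)^{|S|-1}(|S|-1)!\,D_S$) as its leading term. Extending multiplicatively yields well-defined polynomial homomorphisms in both directions, and the M\"obius inversion step shows they compose to the identity on the generators $D_S$ and $C_S$. The main obstacle is purely bookkeeping of signs in the moment--cumulant reduction: the combinatorics is classical, but it is easy to slip a sign when interleaving the three separate sources $(-1)^{|S|-k}$, $\mu_{\Pi_{|S|}}$, and the rescaling $\bar C_S=-C_S$, so the payoff of the reduction is that the signs need only be verified in one consolidated translation step.
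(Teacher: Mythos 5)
Your proposal is correct and takes essentially the same route as the paper's (Lin--Sturmfels) proof: the Leibniz expansion of the determinant grouped by cycle type yields \eqref{d_S=c_S}, and M\"obius inversion on the set-partition lattice, with $\mu=(-1)^{k-1}(k-1)!$, yields \eqref{c_S=d_S}. Your rescaling to $\bar D_S, \bar C_S$ and the triangularity remark are just a more detailed execution of those same two steps, and the signs check out.
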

\begin{proof}[Lin and Sturmfels' proof]
The transition $R_{D} \to R_C$ is Leibnitz's formula.
The transition $R_C\to R_{D}$ follows by  M\"obius inversion \cite[Prop.~3.7.1]{Stanley} on the lattice of set-partitions. 
\end{proof}

\subsection{Transition between symmetrized principal minors and cycle-sums}

Let $\{c_{i} \mid 0\leq i\leq n\}$ denote \emph{symmetrized cycle-sum coordinates} on $S^{n}(\CC^{2})$. 
The isomorphism between the cycle-sum ring and the principal minor ring descends to the symmetrized case:

\begin{example}\label{ex:sym_cd}Here are the first few cases of the isomorphism and its inverse:

\begin{tabular}{l|l}$\begin{matrix}
d_1&=&c_1  ,\hfill\\
d_2&=&c_1^{2}-c_2  ,\hfill\\
d_3&=&c_1^{3} - 3c_1c_2 + c_3  ,\hfill\\
d_4&=&c_1^4-6c_1^2c_2+3c_2^2+4c_1c_3-c_4 ,\hfill\\
d_{5}&=&c_1^5-10c_1^3c_2+15c_1c_2^2+10c_1^2c_3 \hfill
\\&& -10c_2c_3-5c_1c_4+c_5 ,\hfill\\
d_{6}&=&\begin{smallmatrix}c_1^6-15c_1^4c_2+45c_1^2c_2^2+20c_1^3c_3-15c_2^3-60c_1c_2c_3 \\
 -15c_1^2c_4+10c_3^2+15c_2c_4+6c_1c_5-c_6\;.  \hfill \end{smallmatrix}
\end{matrix}$&$
\begin{matrix}
c_1&=& d_1,\hfill\\
c_2 &=& d_1^2-d_2,\hfill\\
c_3 &=& 2d_1^3-3d_1d_2+d_3,\hfill\\
c_4 &=& 6d_1^4-12d_1^2d_2+3d_2^2+4d_1d_3-d_4,\hfill\\
c_5 &=& 24d_1^5-60d_1^3d_2+30d_1d_2^2+20d_1^2d_3 \hfill
\\ && -10d_2d_3-5d_1d_4+d_5,\hfill\\
c_6 &=& \begin{smallmatrix}120d_1^6-360d_1^4d_2+270d_1^2d_2^2+120d_1^3d_3-30d_2^3-120d_1d_2d_3 \\
-30d_1^2d_4+10d_3^2+15d_2d_4+6d_1d_5-d_6\;. \hfill\end{smallmatrix}
\end{matrix}$
\end{tabular}
\end{example}
More generally, if $\alpha = a_{1}^{t_{1}}\cdots a_{\ell}^{t_{\ell}}$ is a partition, let $|\alpha|  = \sum_{i}t_{i}$ denote the number of parts of the partition and for any set of variables $\bf{x} = (x_{1},\dots,x_{n})$, let $x^{\alpha}:= x_{a_{1}}^{t_{1}} \cdots x_{a_{\ell}}^{t_{\ell}}$. 
Here is the relation between symmetrized principal minors and cycle-sums.
\begin{prop} \label{thm:symm-d_s=c_s}
Fix $n\ge 0$.
Consider rings $R_c = \CC\left[c_0,\dots,c_{n}\right]$ and $R_{d} = \CC\left[d_0,\dots,d_{n}\right]$. 
We have a (lower triangular) non-linear isomorphism of rings $R_{d} \to R_c$ given by:
\begin{equation}\label{symm-d_S=c_S}
d_{s}= \sum_{\alpha \vdash s} \; (-1)^{s-|\alpha|} \; p_{\alpha } \;  c^{\alpha},
\end{equation}
and a (lower triangular) non-linear isomorphism of rings $R_c \to R_{d}$ given by:
\begin{equation}\label{symm-c_S=d_S}
c_{s}= \sum_{\alpha \vdash s} \; (-1)^{s-|\alpha|} \; (|\alpha| -1)!\; p_{\alpha } \; d^{\alpha},
\end{equation}
where  
\[
p_{\alpha} = \frac{s!}{1!^{m_{1}} m_{1}! 2!^{m_{2}} m_{2}! \cdots s!^{m_{s}} m_{s}!}.
\]
is the number of set-partitions of $[s]$ with  $type(\alpha) = (m_{1},\ldots,m_{s})$.
\end{prop}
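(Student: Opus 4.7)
The plan is to obtain both formulas by symmetrizing the non-symmetrized transition of Proposition~\ref{thm:d_S=c_S}. The key observation is that in the formulas (\ref{d_S=c_S}) and (\ref{c_S=d_S}), the coefficients and the cardinalities appearing in the monomials depend only on the \emph{shape} of each set-partition, not on which specific elements of $S$ lie in each block. This is exactly the condition needed for the isomorphism $R_D \cong R_C$ to descend to an isomorphism $R_d \cong R_c$ under the obvious quotient maps.

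First, I would introduce the symmetrization homomorphisms $\pi_D\colon R_D \to R_d$ defined by $D_S \mapsto d_{|S|}$ and $\pi_C\colon R_C \to R_c$ defined by $C_S \mapsto c_{|S|}$. Let $\Phi\colon R_D \to R_C$ and $\Psi\colon R_C \to R_D$ be the mutually inverse isomorphisms from Proposition~\ref{thm:d_S=c_S}. I would verify that $\Phi$ descends to a homomorphism $R_d \to R_c$: that is, $\Phi(\ker \pi_D) \subseteq \ker \pi_C$. Since $\ker \pi_D$ is generated by $D_S - D_T$ for $|S|=|T|$, this reduces to observing that the sums $\sum_{S_1\cdots S_k\in \Pi_S}(-1)^{|S|-k} C_{S_1}\cdots C_{S_k}$ and the corresponding sum over $\Pi_T$, when pushed through $\pi_C$, produce identical polynomials in $c_0,\dots,c_n$ (term by term, matched by shape). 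The same argument applies to $\Psi$.

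Next, I would compute the descended formula explicitly. Applying $\pi_C \circ \Phi$ to $D_{[s]}$ yields
\[
d_s \;=\; \sum_{S_1\cdots S_k\in \Pi_{[s]}}(-1)^{s-k}\, c_{|S_1|}\cdots c_{|S_k|}.
\]
Regrouping set-partitions of $[s]$ by their shape $\alpha \vdash s$, and using the fact (\ref{palpha}) that the number of set-partitions of $[s]$ of shape $\alpha$ is exactly $p_\alpha$, while every such set-partition contributes the monomial $c^\alpha$ with sign $(-1)^{s-|\alpha|}$, we obtain (\ref{symm-d_S=c_S}). An identical computation starting from (\ref{c_S=d_S}), which carries the extra factor $(k-1)! = (|\alpha|-1)!$, yields (\ref{symm-c_S=d_S}). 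Since $\Phi$ and $\Psi$ are mutually inverse and both descend, the induced maps $R_d \to R_c$ and $R_c \to R_d$ are automatically mutually inverse ring isomorphisms.

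The main obstacle in this plan is really only bookkeeping: verifying cleanly that the isomorphism descends to the quotient rings and that the reorganization of the sum by shape produces precisely the combinatorial factor $p_\alpha$. No new combinatorial input is needed beyond Stanley's M\"obius function of $\Pi_s$ already invoked in Proposition~\ref{thm:d_S=c_S}; the lower-triangularity of both descended maps follows from the lower-triangularity of $\Phi$ and $\Psi$, since shape-$s^1$ is the unique partition of $s$ whose monomial involves the top-degree variable $c_s$ (resp.\ $d_s$).
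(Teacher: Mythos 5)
Your proposal is correct and is essentially the paper's own argument: the paper likewise obtains \eqref{symm-d_S=c_S} and \eqref{symm-c_S=d_S} by symmetrizing the transition formulas of Proposition~\ref{thm:d_S=c_S}, grouping the set-partitions of $[s]$ by shape and using the count $p_\alpha$ from \eqref{palpha}, so your more detailed verification that the isomorphisms descend to the quotient rings is just a careful spelling-out of the same step.
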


\begin{proof}
We simply combine the symmetrized terms in \eqref{d_S=c_S} and \eqref{c_S=d_S} to get \eqref{symm-d_S=c_S} and \eqref{symm-c_S=d_S}. The formula for $p_{\alpha}$ is \cite[Eq.3.37]{Stanley}.
\end{proof}

\begin{example}\label{ex-cd-4}
From Example \ref{ex-cd-[4]}, we immediately have:
\begin{eqnarray*}
 d_4 &=& -c_4+4c_3c_1+3c_2^2-6c_2c_1^2+c_1^4,
 \\
 c_4 &=& -d_4+4d_3d_1+3d_2^2-12d_2d_1^2+6d_1^4.
\end{eqnarray*}
\end{example}

\begin{example}
To express $d_6$ in terms of $c$'s, we compute $p_{\beta}$ and $\#\beta$ for all $\beta\vdash 6$:
\[{\small 
\begin{array}{|c|c|c|c|c|c|c|c|c|c|c|c|}
\hline
type(\beta)&6^{1}	&5^{1}1^{1}	&4^{1}2^{1}	&4^{1}1^{2}	&3^{2}	&3^{1}2^{1}1^{1} 	&3^{1}1^{3}	& 2^{3}	&2^{2}1^{2}	&2^{1}1^{4}	&1^{6}
\\ \hline
p_{\beta}	&1		&6			&15			&15			&10		&60				&20			&15		&45			&15			&1
\\ \hline
\#(\beta)	&1		&2			&2			&3			&2		&3				&4			&3		&4			&5			&6
\\ \hline

\end{array}}
\]
Therefore,  
\[d_{6}=
-c_6
+6c_1c_5
+15c_2c_4
-15c_1^2c_4
+10c_3^2
-60c_1c_2c_3 
+20c_1^3c_3
-15c_2^3
+45c_1^2c_2^2
-15c_1^4c_2
+c_1^6,
\]
and
\[c_{6}=
-d_6
+6d_1d_5
+15d_2d_4
-30d_1^2d_4
+30d_3^2
-120d_1d_2d_3 
+120d_1^3d_3
-30d_2^3
+270d_1^2d_2^2
-360d_1^4d_2
+120d_1^6.
\]
\end{example}

\section{Matrices with symmetrized cycle-sums}\label{sec:Matrices}
\subsection{Group actions preserving the SCS property}
In general, there are some group actions that preserve the symmetrized principal minor / cycle-sum property, which we call {\bf the SCS property}, and we call the set of all matrices with the SCS property SCS matrices.

Denote the following groups  in $\CC^{n\times n}$:
\begin{itemize}
\item ${\mathcal S}_{n}$:   the group of all $n\times n$ permutation matrices.  An element of ${\mathcal S}_n$ has the form
\begin{equation}
P_{\sigma}=[e_{\sigma(1)},e_{\sigma(2)},\cdots,e_{\sigma(n)}]=[e_{i_1},e_{i_2},\cdots,e_{i_n}],
\label{permutation-matrix}
\end{equation}
where $\sigma=\mtx{1,&2,&\cdots,&n\\i_1,&i_2,&\cdots,&i_n}$ is a permutation, and $e$'s are the standard basis of $\CC^{n}$. 

\item ${\mathcal D}_{n}$: the group of all $n\times n$ nonsingular diagonal matrices.  An element of ${\mathcal D}_n$ has the form
${\rm diag}(d_1,d_2,\cdots,d_n)$ with each $d_i\in \CC^*$. 

 \item
 ${\mathcal D}_{n}^{\pm}$: the group of all $n\times n$ diagonal matrices with $\pm 1$ as diagonal entries.  

\end{itemize}
Let us call  ${\mathcal S}_{n}\ltimes {\mathcal D}_{n}$  {\bf  the scalar permutation group}, and ${\mathcal S}_{n}\ltimes {\mathcal D}_{n}^{\pm}$   {\bf the sign permutation group}.
The following is straightforward to verify.

\begin{prop}\label{prop:symmetry}
Let $A \in \CC^{n\times n}$ be a matrix with the SCS property.
\begin{enumerate}
 \item {\bf Diagonal Modification}: The matrix $A-\lambda I_n$ has the SCS property; 
 \[c_1(A-\lambda I_n)=c_1(A)-\lambda; \quad \text{ and } \quad
c_k(A-\lambda I_n)=c_k(A) \;\;\text{ for all }\;\; k\ge 2.
\] 
For principal minors we have (set $d_{0}=1$)
 \[
 d_{k}(A-\lambda I) = \sum_{i=0}^{k} \binom{k}{i}(- \lambda)^{i}d_{k-i}(A)
. \]
  \item {\bf Homogeneity}: The scalar multiple $\lambda A$ 
  still has the SCS property;
\[c_k(\lambda A)=\lambda^k c_k(A) \quad \text{ and } \quad  d_{k}(\lambda A) = \lambda^{k} d_{k}(A)\;\; \text{ for all } \;\; k\ge 1.\]
\item{\bf Scalar-permutation Similarity}: The group action (by conjugation) of ${\mathcal S}_{n}\ltimes {\mathcal D}_{n}$  on the set of $n\times n$   SCS matrices
preserves all cycle-sums and all principal minors.   
\end{enumerate}
Moreover, operation 1 and conjugation by the  subgroup ${\mathcal S}_{n}\ltimes {\mathcal D}_{n}^{\pm}$   of ${\mathcal S}_{n}\ltimes {\mathcal D}_{n}$ 
preserve the set of $n\times n$ SCS symmetric (resp. skew-symmetric) matrices.
\end{prop}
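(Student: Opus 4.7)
The plan is to handle each of the three items separately by direct computation from the definitions of $C_I$ (Definition~\ref{def:cycle-sums}) and $D_S$; all three are essentially elementary, so I will highlight the key structural observations rather than carry out the algebra in full.

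For part (1), the crucial point is that for $|I|=k\geq 2$, every summand of $C_I(A)$ is a product $a_{i_1,i_2}a_{i_2,i_3}\cdots a_{i_k,i_1}$ with consecutive indices distinct, so no diagonal entries appear. Hence replacing $A$ by $A-\lambda I_n$ leaves $C_I$ unchanged for $k\geq 2$, while $C_{\{i\}}(A-\lambda I_n)=a_{ii}-\lambda$, proving the stated shift in $c_1$ and preservation of the SCS property. For the principal-minor formula, I would apply the classical expansion $\det(B-\lambda I_k)=\sum_{j=0}^{k}(-\lambda)^{k-j}S_j(B)$, where $S_j(B)$ denotes the sum of $j\times j$ principal minors of $B$, to each submatrix $B=A|_S$ with $|S|=k$; the SCS hypothesis collapses $S_j(A|_S)=\binom{k}{j}d_j(A)$, and reindexing $i=k-j$ yields the claimed identity.

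For part (2), both statements are immediate from degree counting: each monomial appearing in $C_I$ with $|I|=k$ is of total degree $k$ in the entries of $A$, and $\det$ is homogeneous of degree $|S|$ on $|S|\times|S|$ submatrices, so $C_I(\lambda A)=\lambda^{|I|}C_I(A)$ and $D_S(\lambda A)=\lambda^{|S|}D_S(A)$, with the SCS property trivially preserved.

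For part (3), I would treat permutation and diagonal conjugation separately. Under $A\mapsto P_\sigma A P_\sigma^{-1}$ the $(i,j)$ entry becomes $a_{\sigma^{-1}(i),\sigma^{-1}(j)}$, which induces bijections $S\mapsto \sigma^{-1}(S)$ on principal submatrices and on cyclic sequences indexed by $I$; hence $D_S(P_\sigma AP_\sigma^{-1})=D_{\sigma^{-1}(S)}(A)$ and $C_I(P_\sigma AP_\sigma^{-1})=C_{\sigma^{-1}(I)}(A)$, and these equal the corresponding values of $A$ by SCS. For $D=\mathrm{diag}(d_1,\ldots,d_n)\in\mathcal{D}_n$, the relation $(DAD^{-1})_{ij}=(d_i/d_j)a_{ij}$ makes the cyclic products in $C_I$ telescope (the factors $d_{i_\ell}/d_{i_{\ell+1}}$ cancel around any cycle), so $C_I(DAD^{-1})=C_I(A)$ for every $I$, not merely in aggregate. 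Correspondingly, $(DAD^{-1})|_S=D|_S\cdot A|_S\cdot (D|_S)^{-1}$ preserves each principal minor. This telescoping is the one conceptual step that requires any care and is the structural reason cycle-sum coordinates are well-adapted to the torus action.

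For the ``Moreover'' clause, I would use that $P_\sigma^{-1}=P_\sigma^T$ and, for $D\in\mathcal{D}_n^{\pm}$, $D^T=D=D^{-1}$; then $(P_\sigma A P_\sigma^{-1})^T=P_\sigma A^T P_\sigma^{-1}$ and $(DAD^{-1})^T=DA^TD^{-1}$, so conjugation by $\mathcal{S}_n\ltimes\mathcal{D}_n^{\pm}$ preserves both symmetry and skew-symmetry. Operation~1 obviously preserves symmetry of $A$, and in the skew-symmetric setting it is compatible with the set of skew-symmetric SCS matrices only in the trivial case $\lambda=0$ (which is consistent with $c_1=0$ for any skew-symmetric matrix). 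There is no genuine obstacle here; the proof is bookkeeping organized around the telescoping identity for diagonal conjugation and the characteristic polynomial identity for part (1).
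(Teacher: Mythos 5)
Your verification is correct and is exactly the routine check the paper leaves to the reader (the paper states only that the proposition ``is straightforward to verify'' and gives no proof). The three key points you isolate --- no diagonal entries occur in $C_I$ for $|I|\ge 2$, the characteristic-polynomial expansion combined with $S_j(A|_S)=\binom{k}{j}d_j(A)$, and the telescoping of $d_{i_\ell}/d_{i_{\ell+1}}$ around a cycle --- are precisely what is needed, and your caveat about operation~1 versus skew-symmetry is an accurate reading of the statement.
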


When $c_2(A)\ne 0$, we can apply a {\bf normalization process} to $A$ as follows: define
\begin{equation}\label{normalization} 
A' :=\frac{1}{\sqrt{-c_2(A)}}\left(A-c_1(A)I_n\right),\qquad {\mathcal N}(A):=DA' D^{-1},
\end{equation}
where the diagonal entries of $D$ are given by: 
$$
d_{1,1}:=1;\qquad d_{k,k}:=\prod_{i=2}^{k} a'_{i-1,i}\quad\text{for \ } i=2,3,\cdots,n.
$$
Then $T:={\mathcal N}(A)\in \CC^{n\times n}$ has the SCS property,  $c_1(T)=0$, $c_2(T)=-1$; moreover, the diagonal entries $t_{i,i}=0$, the $+1$ diagonal entries $t_{i,i+1}=1$, and the $-1$ diagonal entries $t_{i+1,i}=-1$, for all appropriate indices.
The normalization process significantly simplifies the symbolic computations of cycle-sums and determinants (in \texttt{Macaulay2}, for instance) because it significantly reduces the number of parameters needed to express these quantities. 
 
\subsection{Symmetric SCS matrices}

\begin{theorem}\label{thm:SCS-symmetric}
Suppose $n\geq 2$ and $A \in S^{2}\CC^{n}$ has symmetrized $c_1$, $c_2$, and $c_3$ values.  Then $A$ has the  SCS property, and
\[
A =  c_{1}(A)I_{n} \pm \sqrt{c_{2}(A)} D (\mathbbm{1}_{n} - I_{n}) D^{-1}
,\]
where   $D\in {\mathcal D}_{n}^{\pm}$, and $\mathbbm{1}_{n}$ denotes the $n\times n$ all-ones matrix. In particular, $c_{k}(A) = \lambda^{k} c_{k}(\mathbbm{1}_{n})=\lambda^{k} (k-1)!$ for a fixed $\lambda \in\{\sqrt{c_{2}(A)},-\sqrt{c_{2}(A)}\}$ and $k\geq 2$.
\end{theorem}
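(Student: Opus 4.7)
The plan is to exploit the symmetries of Proposition~\ref{prop:symmetry} to put $A$ in a canonical form, and then read off every $c_k$ at once. Because $A$ is symmetric, one has explicit formulas $C_{\{i\}}(A)=a_{i,i}$, $C_{\{i,j\}}(A)=a_{i,j}^{2}$, and $C_{\{i,j,k\}}(A)=2 a_{i,j}a_{j,k}a_{k,i}$, so the three hypotheses translate into: the diagonal is constant, every $a_{i,j}^{2}$ (for $i\ne j$) is a common value $\lambda^{2}$, and $a_{i,j}a_{j,k}a_{k,i}$ is a common value over all distinct triples.

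First I would apply the Diagonal Modification from Proposition~\ref{prop:symmetry} to subtract $c_{1}(A)I_{n}$ and reduce to the case $c_{1}(A)=0$. If $c_{2}(A)=0$, then every $a_{i,j}$ with $i\ne j$ vanishes, so $A=0$ and the formula holds trivially with $\lambda=0$. Otherwise, fix a square root $\lambda$ of $c_{2}(A)$ and write $a_{i,j}=\epsilon_{i,j}\lambda$ with $\epsilon_{i,j}=\epsilon_{j,i}\in\{\pm 1\}$. The $c_{3}$ hypothesis then reads $\epsilon_{i,j}\epsilon_{j,k}\epsilon_{k,i}=\eta$ for a fixed sign $\eta\in\{\pm 1\}$ independent of the distinct triple $\{i,j,k\}$. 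Flipping the branch $\lambda\mapsto -\lambda$ negates each $\epsilon_{i,j}$ and hence $\eta$, so we may assume $\eta=+1$.

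The key observation is that in the $\eta=+1$ case, specializing the triangle identity to $\{1,i,j\}$ gives $\epsilon_{1,i}\epsilon_{i,j}\epsilon_{j,1}=1$, which forces $\epsilon_{i,j}=\epsilon_{1,i}\epsilon_{1,j}$: the sign system is a coboundary. Consequently the diagonal sign matrix $D=\mathrm{diag}(1,\epsilon_{1,2},\dots,\epsilon_{1,n})\in{\mathcal D}_{n}^{\pm}$ conjugates $A$ (after the diagonal shift) to $\lambda(\mathbbm{1}_{n}-I_{n})$, giving the asserted normal form. The higher cycle-sums come for free: every off-diagonal entry of $\mathbbm{1}_{n}-I_{n}$ equals $1$, so $C_{I}(\mathbbm{1}_{n}-I_{n})$ just counts the $(|I|-1)!$ cyclic orderings of $I$ based at $\min I$; by the Scalar-permutation Similarity and Homogeneity of Proposition~\ref{prop:symmetry}, $c_{k}(A)=\lambda^{k}(k-1)!$ for every $k\ge 2$, confirming both the explicit form and the full SCS property.

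The only step that is more than bookkeeping is the cocycle argument: the $c_{3}$ hypothesis alone, together with the freedom to toggle $\lambda\mapsto -\lambda$, must be shown to trivialize the sign system $\{\epsilon_{i,j}\}$ via a single ${\mathcal D}_{n}^{\pm}$-conjugation. Once this is in place, no additional constraints from $c_{4},\dots,c_{n}$ are needed, so the hypothesis that $c_{1},c_{2},c_{3}$ are symmetrized automatically implies that all $c_{k}$ are.
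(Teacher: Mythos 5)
Your proposal is correct and follows essentially the same route as the paper: use the symmetric formulas for $C_{\{i\}},C_{\{i,j\}},C_{\{i,j,k\}}$ to reduce to a $\pm\lambda$ sign pattern, use the constant-$c_3$ condition on triangles to trivialize that sign pattern by a single ${\mathcal D}_{n}^{\pm}$-conjugation (your coboundary argument with base point $1$ plays the role of the paper's normalization of the superdiagonal followed by sign propagation, and your $\lambda\mapsto-\lambda$ flip replaces the paper's case split $c_3=\pm 2$), and then read off $c_k=\lambda^k(k-1)!$ from $\mathbbm{1}_n-I_n$. Your explicit treatment of the degenerate case $c_2=0$ is a small point the paper leaves implicit, but the argument is the same in substance.
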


\begin{proof}First apply diagonal modification to delete the diagonal of $A$, which does not change the rest of the cycle-sums. 
Up to re-scaling we may assume that  $c_{2}=1$ so that all off-diagonal entries must be $\pm 1$,   and $c_{3} = \pm 2$. 
We may further assume that $a_{i,i+1}=1$ for $i\in[n-1]$ by an appropriate ${\mathcal D}_{n}^{\pm}$-conjugation.

If $ c_{3}=2$, then  $a_{1,2}a_{2,3}a_{3,1} = 1$ so that $a_{1,3}=a_{3,1}=1$, and similarly $C_{\{i,i+1,j\}}(A) = 2$ implies that  $a_{i,j} =1$ for all $i,j$. 

If $c_{3}=-2$, then for any $i\in [n]$ and $i+2\le j\le n$, $C_{\{i,i+1,j\}}=2a_{i,i+1}a_{i+1,j}a_{i,j}=c_3=-2$, which implies that
$a_{i,j}=-a_{i+1,j}=(-1)^{2} a_{i+2,j}=\cdots=(-1)^{j-i-1}a_{j-1,j}=(-1)^{j-i-1}.$
Let $D:={\rm diag}(1, (-1)^1, (-1)^{2},\cdots,(-1)^{n-1})$.  Then $(-1)  D^{-1} A D = \mathbbm{1}_{n} - I_{n}$.  

Obviously, $A$ has the  SCS property.
\end{proof}

\begin{remark}
The condition that all the off-diagonal $2\times 2$ minors vanish is called exclusive-rank one (or E-rank one)
 in \cite{Oeding_tangential}. This ``off-diagonal rank'' was first studied in \cite{FiedlerMarkham} and is a special case of ``structure rank'' in \cite{FiedlerStructure, BrualdiMassey}. The fact that symmetric matrices with symmetrized cycle-sums can be written as the sum of a rank-one matrix and a diagonal matrix (and have E-rank $\leq 1$) can also be proved using Fiedler and Markham's main result in \cite{FiedlerMarkham}.

\end{remark}

\subsection{Skew-symmetric SCS matrices} 
Every skew-symmetric matrix has $c_{2k+1}=0$ for any odd integer $2k+1\in [n]$. 
Given $A\in\CC^{n\times n}$ and $S\in \mathcal{P} (n)$, let $A_{S}$ denote the principal submatrix of $A$ with row and column set $S$.

\begin{theorem}\label{thm:SCS-skew-symmetric}
Suppose $A \in \bw{2}\CC^{n}$ ($n\ge 4$) has symmetrized $c_2$ and $c_4$  values. Then $A$ has the  SCS property. Let $\mathbbm{1}_{n}^{\wedge}$ denote the $n\times n$ skew-symmetric matrix with 1's above the diagonal and $-1$'s below the diagonal. 
When $n\ne 4$,
\[
A = \lambda P\mathbbm{1}_{n}^{\wedge}P^{-1},
\qquad\text{for}\quad \lambda\in\CC,\ P\in {\mathcal S}_{n}\ltimes {\mathcal D}_{n}^{\pm}.
\]
When $n=4$, either $A = \lambda P\mathbbm{1}_{n}^{\wedge}P^{-1}$ for $\lambda\in\CC$ and $P\in {\mathcal S}_{n}\ltimes {\mathcal D}_{n}^{\pm}$ with $c_4(\mathbbm{1}_{n}^{\wedge})=2$, or
\[
A= 
\lambda P\mtx{
  0&  1&   1& 1\\
   -1&  0&  1&  -1\\
   -1&  -1&   0&  1\\
   -1&  1&  -1&  0\\
} P^{-1}
\qquad\text{for}\quad \lambda\in\CC,\ P\in {\mathcal S}_{4}\ltimes {\mathcal D}_{4}^{\pm},
\]
 with
$\ds
c_4\left(\mtx{
  0&  1&   1& 1\\
   -1&  0&  1&  -1\\
   -1&  -1&   0&  1\\
   -1&  1&  -1&  0\\
} \right)
=-6.$
\end{theorem}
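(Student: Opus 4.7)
The strategy is to normalize $A$ so its off-diagonal entries lie in $\{\pm 1\}$ and then analyze the resulting sign pattern combinatorially. By Proposition~\ref{prop:symmetry}(2), rescaling by $(-c_2(A))^{-1/2}$ gives $c_2(A)=-1$; combined with $a_{ij}a_{ji}=-a_{ij}^2$ this forces $a_{ij}\in\{\pm 1\}$. The normalization \eqref{normalization} then provides a $\mathcal{D}_n^\pm$-conjugation making $a_{i,i+1}=1$ for $i\in[n-1]$. Writing $\epsilon_{ij}:=a_{ij}\in\{\pm 1\}$ for $i<j$, I introduce the \emph{triangle signs} $f_{ijk}:=\epsilon_{ij}\epsilon_{jk}\epsilon_{ik}$ for $i<j<k$; these are the natural invariants under $\mathcal{D}_n^\pm$-conjugation.

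For any 4-subset $I=\{i_1<i_2<i_3<i_4\}$, pairing each 4-cycle with its reverse yields
\[
C_I(A)=2(T_1+T_2+T_3),\qquad T_\ell\in\{\pm 1\},\qquad T_1T_2T_3=-1,
\]
so $c_4\in\{-6,2\}$. Each $T_\ell$ factors as a product of two triangle signs on $I$ that share an edge, and the four triangle signs on $I$ automatically satisfy $\prod f=1$; hence this dichotomy is really a condition on those four signs.

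In the main case $c_4=2$, the forbidden configurations on each 4-subset are exactly the ``alternating'' ones $f_{i_1i_2i_3}=f_{i_1i_3i_4}=-f_{i_1i_2i_4}=-f_{i_2i_3i_4}$. The crux is the combinatorial claim that, for $n\geq 5$, propagating these exclusions across overlapping 4-subsets forces $f_{ijk}$ to be globally constant. If $f\equiv +1$ then $\epsilon_{ij}=\delta_i\delta_j$ is a coboundary and conjugation by $D=\mathrm{diag}(\delta_1,\ldots,\delta_n)\in\mathcal{D}_n^\pm$ sends $A$ to $\mathbbm{1}_n^\wedge$. If $f\equiv -1$, then normalization forces $\epsilon_{ij}=(-1)^{i+j-1}$, and a parity-sorting permutation together with a further $\mathcal{D}_n^\pm$-conjugation reduces to the previous case. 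In the rigid case $c_4=-6$ every 4-subset satisfies the alternating pattern; for $n\geq 5$ this is inconsistent because comparing the rigidity on $\{1,2,3,4\}$, $\{1,2,3,5\}$, and $\{1,2,4,5\}$ forces $f_{124}=-f_{125}$ alongside $f_{124}=f_{125}$; for $n=4$ the two admissible configurations coincide with the stated special matrix up to $\mathcal{D}_4^\pm$-conjugation.

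The SCS property of the resulting matrices is then immediate: every principal submatrix of $\mathbbm{1}_n^\wedge$ of size $k$ is itself $\mathbbm{1}_k^\wedge$ (the induced linear order preserves the ``ones above the diagonal'' pattern), so all size-$k$ cycle-sums agree; for the $4\times 4$ exception, $c_1=c_3=0$ by skew-symmetry and $c_2,c_4$ are single scalars. The main obstacle is the combinatorial rigidity claim above---that overlapping $c_4=2$ constraints are enough to pin down all triangle signs for $n\geq 5$---which is settled by a careful case analysis at $n=5$ exploiting the specific shape of the forbidden alternating configurations, with the general $n$ following by restriction.
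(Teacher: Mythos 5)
Your reduction to entries in $\{\pm1\}$ with superdiagonal $+1$, the computation $C_I=2(T_1+T_2+T_3)$ with $T_1T_2T_3=-1$ forcing $c_4\in\{2,-6\}$, and the elimination of the $c_4=-6$ case for $n\geq 5$ via three overlapping $4$-subsets are all sound (and run parallel to the paper's argument). But the crux claim — that for $n\geq 5$ the exclusion of the alternating pattern on every $4$-subset forces the triangle signs $f_{ijk}=\epsilon_{ij}\epsilon_{jk}\epsilon_{ik}$ to be globally constant — is false. The triangle signs are invariant under ${\mathcal D}_n^{\pm}$-conjugation but not under ${\mathcal S}_n$-conjugation (an odd permutation of a triple flips its sign), so permutation-conjugates of $\mathbbm{1}_n^{\wedge}$ already realize non-constant admissible patterns. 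Concretely, for $n=5$ take $T'=P_{(45)}\mathbbm{1}_5^{\wedge}P_{(45)}^{-1}$, i.e.\ the matrix that is $+1$ everywhere above the diagonal except $t_{45}=-1$; conjugating by $\mathrm{diag}(1,1,1,1,-1)$ makes it normalized (superdiagonal $+1$) without changing triangle signs. This matrix has symmetrized $c_2$ and $c_4$ (all $C_I=2$, being conjugate to $\mathbbm{1}_5^{\wedge}$), yet $f_{\{i,4,5\}}=-1$ for $i=1,2,3$ while all other $f_{\{i,j,k\}}=+1$. Hence your dichotomy ``$f\equiv+1$ or $f\equiv-1$'' does not exhaust the cases, and the matrices it misses are exactly those reachable only with a nontrivial permutation, which your argument never produces. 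The same oversight shows up in your $n=4$, $c_4=-6$ discussion: the two alternating configurations have different triangle-sign patterns, so they cannot both be ${\mathcal D}_4^{\pm}$-conjugate to the displayed exceptional matrix; an element of ${\mathcal S}_4$ is genuinely needed.

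To repair this along your lines you would have to classify all admissible sign patterns up to the sign-twisted ${\mathcal S}_n$-action on triples (not just the constant ones), which is essentially the hard part of the theorem. The paper avoids this by working directly with conjugacy: it attaches to each $T\in\bw{2}\{\pm1\}^n$ the multiset $\MS(T)$ of row-wise counts of $-1$'s, settles $n=4$ by listing the four possible multisets, rules out $c_4=-6$ for $n\geq5$ by restricting to $5\times5$ principal submatrices, and then inducts on $n$: after conjugating so that $T_{\{1,\dots,n-1\}}=\mathbbm{1}_{n-1}^{\wedge}$ and $t_{1,n}=1$, the induction hypothesis applied to $T_{\{1,\dots,n-2,n\}}$ forces the last column to be a block of $+1$'s followed by $-1$'s, which is visibly ${\mathcal S}_n\ltimes{\mathcal D}_n^{\pm}$-conjugate to $\mathbbm{1}_n^{\wedge}$. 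As written, your proof plan has a genuine gap at its central lemma.
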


\begin{proof}
Since $A\in\bw{2}\CC^{n}$   has symmetrized $c_2$ values, we have $A=\lambda T$ where 
$T$ is a skew-symmetric matrix that has $\pm 1$ as  all off-diagonal entries. Denote by $\bw{2}\{\pm 1\}^{n}$ the set of all such $T$'s.
Let us focus on $T$ and $\bw{2}\{\pm 1\}^{n}$.

We associate to every matrix  $T'\in\bw{2}\{\pm 1\}^{n}$   a multiset $\MS(T')=\{i_1, i_2,\cdots, i_n\}$, 
where $i_t$ is   the number of $-1$'s on the $t$-th row of $T'$.  For example, 
$\MS(\mathbbm{1}_{n}^{\wedge})=\{0,1,2,\cdots,n-1\}$.  The following observations are obvious:
\begin{itemize}
\item
If $\MS(T')=\{i_1, i_2,\cdots, i_n\}$, then $i_1+i_2+\cdots+i_n=\frac{n(n-1)}{2}$. 
\item
If $T', T''\in \bw{2}\{\pm 1\}^{n}$ are ${\mathcal S}_{n}$-conjugate, then $\MS(T')=\MS(T'')$. 
\item
If  $T'\in \bw{2}\{\pm 1\}^{n}$ is ${\mathcal S}_{n}\ltimes {\mathcal D}_{n}^{\pm}$-conjugate to $\mathbbm{1}_{n}^{\wedge}$ and 
$0\in\MS(T')$, then $\MS(T')=\{0,1,2,\cdots,n-1\}$.

\end{itemize}

When $n=3$, it is clear that $T=P\mathbbm{1}_{3}^{\wedge}P^{-1}$ for some $P\in {\mathcal S}_{3}\ltimes {\mathcal D}_{3}^{\pm}$. 

When $n=4$, $\MS(T)$ has 4 possibilities: $\{0,1,2,3\}$, $\{1,1,2,2\}$, $\{1,1,1,3\}$, and $\{0,2,2,2\}$. 
The matrices  in $\bw{2}\{\pm 1\}^{4}$ associated to $\{0,1,2,3\}$ and $\{1,1,2,2\}$ 
(resp. $\{1,1,1,3\}$ and $\{0,2,2,2\}$) are ${\mathcal S}_{4}\ltimes {\mathcal D}_{4}^{\pm}$-conjugate, with $c_4=2$ (resp. $c_4=-6$). 
For examples, let $D={\rm diag}(1,-1,1,1)$, then
\[
D
\mathbbm{1}_{n}^{\wedge}
D^{-1}=
\left(\begin{smallmatrix}
  0&  -1& 1& 1\\
1&  0&  -1&  -1\\
-1&  1&   0& 1\\
-1&   1&  -1&  0\\
\end{smallmatrix}\right),
\quad
D
\left(\begin{smallmatrix}
  0&  1&   1& 1\\
  -1&  0&  1&  -1\\
 -1&  -1&   0&  1\\
 -1&  1&  -1&  0\\
\end{smallmatrix}\right)
D^{-1}=
\left(\begin{smallmatrix}
  0&  -1&  1& 1\\
1&  0&  -1&  1\\
-1&  1&   0&  1\\
-1&  -1&  -1&  0\\
\end{smallmatrix}\right).
\]

Now suppose $n\ge 5$.  
We first prove that   $c_4(T)\equiv 2$.   Suppose on the contrary, $c_4(T)=-6$.  
Then up to ${\mathcal S}_{n}\ltimes {\mathcal D}_{n}^{\pm}$-conjugation, we may assume that   
\[
T_{\{1,2,3,4,5\}}=\mtx{0 &1 &1 &1 &1 \\
-1 &0 &1  &-1 &t_{2,5} \\
-1 &-1 &0 &1  &t_{3,5} \\
-1 &1 &-1 &0 &t_{4,5} \\
-1 &-t_{2,5} &-t_{3,5} &-t_{4,5} &0
}.
\]
By   $n=4$ case, we have $\MS(T_{\{1,2,3,5\}})=\{0,2,2,2\}$, so that  $t_{2,5}=-1$ and $t_{3,5}=1$. 
Similarly, $\MS(T_{\{1,3,4,5\}})=\{0,2,2,2\}$, so that $t_{3,5}=-1$.  This is a contradiction. Therefore, $c_4(T)\equiv 2$.

Finally, we prove by induction on $n\ge 5$ that every  matrix in $\bw{2}\{\pm 1\}^{n}$ that has symmetrized $c_2$ and $c_4$ values 
is ${\mathcal S}_{n}\ltimes {\mathcal D}_{n}^{\pm}$-conjugate to $\mathbbm{1}_{n}^{\wedge}$, which implies the SCS property of the matrix.  
We use $T$ as the example.

\begin{enumerate}
\item $n=5$:   Up to ${\mathcal S}_{5}\ltimes {\mathcal D}_{5}^{\pm}$-conjugation, we may assume that   
\[
T=\mtx{0 &1 &1 &1 &1\\-1 &0 &1 &1 &t_{2,5}\\-1 &-1 &0 &1 &t_{3,5}\\-1 &-1 &-1 &0 &t_{4,5}\\-1 &-t_{2,5} &-t_{3,5} &-t_{4,5} &0}.
\]
Then $\MS(T_{\{1,2,3,5\}})=\MS(T_{\{1,3,4,5\}})=\{0,1,2,3\}$. 
The possible cases are:
\begin{enumerate}
\item  $t_{2,5}=t_{3,5}=t_{4,5}=1$;
\item $t_{2,5}=t_{3,5}=1$, $t_{4,5}=-1$;
\item $t_{2,5}=1$, $t_{3,5}=t_{4,5}=-1$.
\end{enumerate}
All of them are ${\mathcal S}_{5}\ltimes {\mathcal D}_{5}^{\pm}$-conjugate to $\mathbbm{1}_{5}^{\wedge}$.  So $n=5$ is proved. 

\item $n=N$: Suppose the claim is true for any $n$ with $5\le n<N$.   Then for $T=(t_{i,j})_{n\times n}\in \bw{2}\{\pm 1\}^{N}$, up to ${\mathcal S}_{N}\ltimes {\mathcal D}_{N}^{\pm}$-conjugation,
we may assume that $T_{\{1,2,\cdots,N-1\}}=\mathbbm{1}_{N-1}^{\wedge}$, and  $t_{1,N}=1$. 
By assumption, 
$T_{\{1,2,\cdots,N-2,N\}}$ is  ${\mathcal S}_{N-1}\ltimes {\mathcal D}_{N-1}^{\pm}$-conjugate to $\mathbbm{1}_{N-1}^{\wedge}$. 
Moreover, $0\in\MS(T_{\{1,2,\cdots,N-2,N\}})$. So $\MS(T_{\{1,2,\cdots,N-2,N\}})=\{0,1,2,\cdots,N-2\}$. 
Thus it is impossible to have $t_{i,N}=-1$ and $t_{i+1,N}=1$ for any $i=2,3,\cdots,N-3$. Similarly, it is impossible to have $t_{N-2,N}=-1$ and $t_{N-1,N}=1$. 
Therefore, the possible cases for $T$ are:
\[
t_{1,N}=\cdots=t_{i,N}=1,\quad
t_{i+1,N}=\cdots=-1,\quad
t_{N,N}=0,\quad
\text{for some $i\in [N-1]$.}
\]
All of them are ${\mathcal S}_{N}\ltimes {\mathcal D}_{N}^{\pm}$-conjugate to $\mathbbm{1}_{N}^{\wedge}$.  
\end{enumerate}
Therefore, the claim holds and  the proof is done.
\qedhere
\end{proof}

\subsection{Arbitrary square SCS matrices}\label{sec:nonsym}
We discuss arbitrary square matrices $A$ in $\CC^{n\times n}$ with SCS property in this section.  
For simplicity, we assume that $c_1=0$. We handle these matrices in 3 cases: when $c_{2}=0$, when $c_{2}\neq 0$ and $c_{3}=0$, and when $c_{2}\neq 0$ and $c_{3}\neq0$.
When $c_2\ne 0$, we also assume that $A$ is  normalized (see \eqref{normalization}), 
so that $A={\mathcal N}(A)$ and $c_2=-1$.  

\subsubsection{Case $c_2=0$}
This case includes the following two examples:
\begin{enumerate}
\item
Any strictly upper triangular matrix $A$ satisfies that
$
c_1=c_2=\cdots=c_{n}=0.
$
\item
The permutation matrix 
$P_{\mtx{1, &2, &\cdots, &n-1, &n\\2, &3, &\cdots, &n, &1}}$ defined in \eqref{permutation-matrix} has  
\[
c_1=c_2=\cdots=c_{n-1}=0,\qquad c_n=1.
\]
\end{enumerate}

The following theorem embraces both examples: 

\begin{theorem}\label{thm:c_2=0}
Suppose $n\ge 2$ and $A\in\CC^{n\times n}$ is a SCS matrix with  $c_1=c_2 =0$.
Then $A$ belongs to one of the following situations:
\begin{enumerate}
\item 
$A$ is ${\mathcal S}_n$-conjugate  to a strictly upper triangular matrix, where
$$c_1=c_2=\cdots=c_n=0.$$

\item
$A$ is an element of ${\mathcal S}_n \ltimes {\mathcal D}_n$ with  the ${\mathcal S}_n$ component of order $n$, where $n\ge 3$ and 
$$c_1=c_2=\cdots=c_{n-1}=0,\qquad c_n\ne 0.$$

\end{enumerate}

\end{theorem}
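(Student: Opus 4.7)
The plan is to translate the hypotheses $c_1 = c_2 = 0$ into conditions on the support digraph of $A$ and then perform a structural case analysis. From SCS with $c_1 = 0$, every diagonal entry satisfies $a_{ii} = C_{\{i\}}(A) = 0$. From SCS with $c_2 = 0$, every pair satisfies $a_{ij}a_{ji} = C_{\{i,j\}}(A) = 0$. Encode this in the \emph{support digraph} $G$ on $[n]$ with an edge $i \to j$ iff $a_{ij} \neq 0$; then $G$ has no loops and no directed $2$-cycles, so every directed cycle in $G$ has length at least $3$. (For $n = 2$ this forces $G$ to be acyclic, so only case (1) can occur.) The remainder splits on whether $G$ contains a directed cycle.

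If $G$ is acyclic, a topological sort supplies a permutation $\sigma \in \mathcal{S}_n$ such that $P_\sigma A P_\sigma^{-1}$ is strictly upper triangular. Conjugation by $P_\sigma$ preserves SCS by Proposition~\ref{prop:symmetry}(3), and every cycle-sum of a strictly upper triangular matrix vanishes since each summand traces a closed directed walk, which is impossible in such support. This produces case (1) with $c_1 = \cdots = c_n = 0$.

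Now suppose $G$ has a directed cycle, and let $\ell \geq 3$ be the minimum cycle length; fix a minimum cycle $v_1 \to v_2 \to \cdots \to v_\ell \to v_1$. The first sub-claim is that this cycle is chord-free: any chord $v_i \to v_j$ with $j \not\equiv i + 1 \pmod{\ell}$ would either shortcut the cycle to produce a strictly shorter cycle, or collide with a reverse cycle edge to form a 2-cycle, both forbidden. Hence $G|_{\{v_1,\ldots,v_\ell\}}$ is exactly the $\ell$-cycle, so exactly one summand of $C_{\{v_1,\ldots,v_\ell\}}(A)$ is nonzero and $c_\ell \neq 0$. Now the heart of the argument: assume toward contradiction that $\ell < n$ and pick $u \in [n] \setminus \{v_1, \ldots, v_\ell\}$. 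Examine $S := \{v_1, \ldots, v_{\ell-1}, u\}$. The restriction $G|_{\{v_1, \ldots, v_{\ell-1}\}}$ is the directed path $v_1 \to v_2 \to \cdots \to v_{\ell-1}$, so the only Hamiltonian directed cycle that $G|_S$ can support is $v_1 \to v_2 \to \cdots \to v_{\ell-1} \to u \to v_1$. Since SCS demands $C_S = c_\ell \neq 0$, this cycle must actually exist in $G$, forcing the edges $v_{\ell-1} \to u$ and $u \to v_1$. Apply the same analysis to $S' := \{v_1, v_3, v_4, \ldots, v_\ell, u\}$ (skipping $v_2$): the unique Hamiltonian path in $G|_{\{v_1, v_3, \ldots, v_\ell\}}$ is $v_3 \to v_4 \to \cdots \to v_\ell \to v_1$, so the only candidate Hamiltonian cycle in $G|_{S'}$ is $v_3 \to v_4 \to \cdots \to v_\ell \to v_1 \to u \to v_3$, requiring $v_1 \to u \in G$. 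But $u \to v_1 \in G$ was just established, and $G$ has no 2-cycles, so $v_1 \to u \notin G$ and $C_{S'} = 0 \neq c_\ell$, a contradiction. Hence $\ell = n$.

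Once $\ell = n$, the chord-free property applied to the full cycle forces $G$ to be exactly an $n$-cycle, so $A$ has precisely one nonzero entry in each row and in each column, arranged along an $n$-cycle permutation; thus $A \in \mathcal{S}_n \ltimes \mathcal{D}_n$ with $\mathcal{S}_n$-component of order $n$. For $|S| = k < n$, $G|_S$ is a disjoint union of subpaths of the $n$-cycle and contains no directed cycle, so $c_k = C_S(A) = 0$, while $c_n = a_{v_1 v_2} a_{v_2 v_3} \cdots a_{v_n v_1} \neq 0$. The main obstacle is engineering the second set $S'$: it must be chosen so that the edges produced by the analysis on $S$ clash, through the no-2-cycle rule, with the edges required to complete a Hamiltonian cycle in $G|_{S'}$; skipping a single interior cycle vertex $v_2$ turns out to be the right choice.
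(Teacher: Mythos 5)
Your proof is correct, and it takes a genuinely different route from the paper's. The paper proves case (1) by induction on $n$, peeling off a zero row or column after reducing the leading $(n-1)\times(n-1)$ block; you replace this with a topological sort of the loop-free, $2$-cycle-free support digraph, which is more direct. For case (2), the paper takes the least $k$ with $c_k\neq 0$, conjugates so that two overlapping principal submatrices are strictly upper triangular in order to pin down the unique $k$-cycle on $\{1,\dots,k\}$, and rules out $k<n$ by exhibiting a nonzero $(k-1)$-cycle-sum $C_{\{3,\dots,k+1\}}$, contradicting $c_{k-1}=0$. You instead take a \emph{minimum-length} directed cycle, prove it is chord-free (any chord would shortcut to a shorter cycle or create a forbidden $2$-cycle), and rule out $\ell<n$ with two size-$\ell$ test sets $S$ and $S'$ whose unique candidate Hamiltonian cycles force both $u\to v_1$ and $v_1\to u$, contradicting $c_2=0$ rather than $c_{\ell-1}=0$; all the steps (uniqueness of the Hamiltonian path on the chord-free arc, the count of nonzero summands in $C_I$, the $\ell=3$ and $n=2$ edge cases) check out. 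What each approach buys: yours is induction-free, coordinate-free (no conjugation to normal form until the very end), and isolates the combinatorial content cleanly; the paper's inductive normalization is clunkier here but is reused verbatim in the proofs of Theorems~\ref{thm:c_3=0} and~\ref{thm:c_2!=0,c_3!=0}, so it earns its keep downstream.
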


\begin{proof}
\begin{enumerate}
\item
We use induction to prove the claim: if a SCS matrix $A\in\CC^{n\times n}$ satisfies that $c_1=c_2=\cdots=c_n=0$, then $A$ is   ${\mathcal S}_n$-conjugate to a strictly upper triangular matrix.  

The case $n=2$ is immediate.  
Suppose the claim holds for any integer $m$ with $2\le m<n$. Now let $A=(a_{i,j})_{n\times n} \in\CC^{n\times n}$ be  a SCS matrix with $c_1=c_2=\cdots=c_n=0$.
By induction hypothesis, up to ${\mathcal S}_n$-conjugation, we may assume that the principal submatrix $A_{\{1,2,\cdots,n-1\}}$  is strictly upper triangular.  
 
\begin{enumerate}
\item
If  $a_{n,1}=0$, then the first column of $A$ is  zero. By induction hypothesis, there exists a ${\mathcal S}_n$-conjugation that permutes the last $(n-1)$ rows and columns of $A$ respectively, such that the resulting matrix $A'$ has a  strictly upper triangular principal submatrix $A'_{\{2,3,\cdots,n\}}$. Since the first column of $A'$ is still  zero,  $A'$ is strictly upper triangular.  The claim is proved.

\item
If  $a_{n,1}\ne 0$,  
we first show that there exists a zero row in $A$.  Suppose on the contrary, every row of $A$ is nonzero.  
From the first row, pick   $i_2>1$ such that  $a_{1,i_2}\ne 0$.  If $i_2<n$, pick $i_3>i_2$ such that  $a_{i_2,i_3}\ne 0$.  Repeat the process until we reach $i_{\ell}=n$.  
Then the $\ell$-cycle-sum
$C_{\{1,i_2,\cdots,i_{\ell-1},n\}} (A)=a_{1,i_2}a_{i_2,i_3}\cdots a_{i_{\ell-1},n}a_{n,1}\ne 0$, which contradicts to the assumption $c_{\ell}=0$.  So $A$ has a zero row.  
Then $A$ is ${\mathcal S}_n$-conjugate to a matrix $A'$ with a zero $n$-th row. 
By induction hypothesis,  there exists an ${\mathcal S}_n$-conjugation    that permutes the first $(n-1)$ rows and columns of $A'$ respectively, 
and the resulting matrix $A''$ has strictly upper triangular $A''_{\{1,2,\cdots,n-1\}}$.  Then $A''$ is strictly upper triangular,
 and the claim is proved.
\end{enumerate}

Overall, the claim holds for all $n$.

\item Now we prove the following claim: if a SCS matrix $A=[a_{i,j}]_{n\times n}\in\CC^{n\times n}$ has $c_1=c_2=\cdots =c_{k-1}=0$  but $c_k\ne 0$ for certain $k$ with $3\le k\le n$, then $k=n$, and $A\in {\mathcal S}_n \ltimes {\mathcal D}_n$ has the ${\mathcal S}_n$ component of order $n$.  This will complete the proof of the whole theorem. 

Since $c_1=c_2=\cdots =c_{k-1}=0$, up to ${\mathcal S}_n$-conjugation, we may assume that $A_{\{1,\cdots,k-1\}}$ is strictly upper triangular. 
 There exists a ${\mathcal S}_n$-conjugation on $A$  that permutes the rows and columns in $\{2,3,\cdots,k\}$, such that the resulting matrix $A'=[a'_{i,j}]_{n\times n}$ has
a strictly upper triangular  $A'_{\{2,3,\cdots,k\}}$.
 Since the first column of $A_{\{1,2,\cdots,k\}}$ has at most one nonzero entry, so does the first column of $A'_{\{1,2,\cdots,k\}}$. 
  Then  
 $$0\ne c_k=C_{\{1,2,\cdots,k\}}(A')=a'_{1,2} a'_{2,3}\cdots a'_{k-1,k} a'_{k,1}.$$ 
So $a'_{k,1}\ne 0$, and it is the only nonzero entry in the first column and in the lower triangular part of $A'_{\{1,2,\cdots,k\}}$.
We declare that $a'_{1,2}, a'_{2,3},\cdots, a'_{k,1}$ are the only nonzero entries in  $A'_{\{1,2,\cdots,k\}}$; otherwise, $a'_{i,j}\ne 0$ for some 
$1\le i<i+1<j\le k$, and the cycle-sum $C_{\{1,2,\cdots,i,j,j+1,\cdots,k\}}(A')\ne 0$, which contradicts to  $c_1=c_2=\cdots =c_{k-1}=0$.

If $k=n$, then   both $A$ and $A'$ are elements of ${\mathcal S}_n \ltimes {\mathcal D}_n$ with the ${\mathcal S}_n$ component of order $n$. So the claim holds. 

It remains to prove that $k<n$ is impossible.  Otherwise, $3\le k<n$.  Then
\[
A'_{\{1,2,\cdots,k+1\}}=
\mtx{
 &a'_{1,2} & & &a'_{1,k+1} \\ & &\ddots & &\vdots \\ & & &a'_{k-1,k} &a'_{k-1,k+1} \\
a'_{k,1} &0 &\ldots &0 &a'_{k,k+1} \\ a'_{k+1,1} &a'_{k+1,2} &\ldots & a'_{k+1,k} &0
}.
\]
By $c_{k}=C_{\{2,3,\cdots,k+1\}}(A')\ne 0$, we have $a'_{k,k+1}\ne 0$ and $a'_{k+1,2}\ne 0$. 
By $c_{k}=C_{\{1,3,4,\cdots,k+1\}}(A')\ne 0$, we have $a'_{k+1,3}\ne 0$.  Then
\[
c_{k-1}=C_{\{3,4,\cdots,k+1\}}(A')=a'_{3,4}\cdots a'_{k,k+1}a'_{k+1,3}\ne 0,
\] 
contradicting to the assumption $c_{k-1}=0$.  Therefore, $k=n$.
\qedhere
\end{enumerate}
\end{proof}

\subsubsection{Case $c_2\ne 0$, $c_3= 0$}\label{sec:c2c3ne0} 

A typical family of SCS matrices with $c_2\ne 0$ and $c_3=0$ can be found in skew-symmetric matrices. See Theorem \ref{thm:SCS-skew-symmetric}. 
Indeed,  any matrix  with $c_1=0$, $c_2\ne 0$, and $c_3=0$  is diagonal conjugate to a skew-symmetric one.

\begin{theorem}\label{thm:c_3=0}
Suppose $A\in\CC^{n\times n}$ has symmetrized $c_k$ values for $k=1,2,3$, with $c_1=0$, $c_2\ne 0$ and $c_3=0$.  
Then there exists   $D\in{\mathcal D}_n$ such that
\begin{equation}
A=D^{-1}(\sqrt{-c_2}T)D,
\label{normalize-c_3=0}
\end{equation}
where $T$ is a normalized skew-symmetric matrix, i.e., all off-diagonal entry values of $T$ are $\pm 1$, and  $t_{1,2}=t_{2,3}=\cdots=t_{n-1,n}=1$.
In particular, $A$ has symmetrized $c_{2k+1}$ values with $c_{2k+1}=0$ for all $k\ge 1$.
\end{theorem}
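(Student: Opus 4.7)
The plan is to reduce to the normalized matrix $T := \mathcal{N}(A)$ and then exploit the vanishing of $c_3$ to pin down the structure of $T$. First I would apply the normalization from \eqref{normalization}: since $c_1=0$, this simply rescales $A$ by $1/\sqrt{-c_2}$ and conjugates by a diagonal matrix $D \in \mathcal{D}_n$. By Proposition~\ref{prop:symmetry} these operations preserve the symmetrization of every individual cycle-sum and produce a matrix $T$ satisfying $t_{i,i}=0$, $t_{i,i+1}=1$, $t_{i+1,i}=-1$, with $c_1(T)=0$, $c_2(T)=-1$, and $c_3(T)=0$. The symmetrization of $c_2$ immediately gives
\[
t_{i,j}\, t_{j,i} \;=\; -1 \quad\text{for all } i\neq j.
\]

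Next comes the key algebraic step. For every triple $i<j<k$, symmetrization forces $C_{\{i,j,k\}}(T)=0$, i.e.,
\[
t_{i,j}\,t_{j,k}\,t_{k,i} \;+\; t_{i,k}\,t_{k,j}\,t_{j,i} \;=\; 0.
\]
Substituting the reciprocal relations $t_{j,i}=-1/t_{i,j}$, $t_{k,j}=-1/t_{j,k}$, $t_{k,i}=-1/t_{i,k}$ and clearing denominators yields the clean identity
\[
t_{i,k}^{\,2} \;=\; t_{i,j}^{\,2}\, t_{j,k}^{\,2}.
\]

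I would then prove $t_{i,j}=\pm 1$ for all $i<j$ by induction on $k:=j-i$. The base case $k=1$ is the normalization. For the inductive step with $k\ge 2$, apply the identity above to the triple $\{i,\, i+1,\, j\}$: the inductive hypothesis gives $t_{i+1,j}^{\,2}=1$ (since $j-(i+1)=k-1$), and $t_{i,i+1}^{\,2}=1$, so $t_{i,j}^{\,2}=1$. Hence every off-diagonal entry of $T$ is $\pm 1$, and the constraint $t_{i,j}t_{j,i}=-1$ then forces $t_{j,i}=-t_{i,j}$, so $T$ is skew-symmetric. The normalized form \eqref{normalize-c_3=0} follows by inverting the normalization.

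For the ``in particular'' clause: for any skew-symmetric matrix $T$ and any subset $S$ with $|S|$ odd, each oriented cycle appearing in $C_S(T)$ pairs with its reversal, and reversing a length-$|S|$ cycle in a skew-symmetric matrix multiplies its contribution by $(-1)^{|S|}=-1$, so the cycles cancel in pairs and $C_S(T)=0$. Diagonal conjugation preserves each individual $C_S$ and $C_S(\lambda B)=\lambda^{|S|}C_S(B)$, so $C_S(A)=0$ for every $S$ with $|S|$ odd; in particular $A$ has symmetrized $c_{2k+1}=0$ for all $k\ge 1$. The main obstacle is isolating the clean identity $t_{i,k}^{\,2}=t_{i,j}^{\,2}\,t_{j,k}^{\,2}$ from the tangle of reciprocal $c_2$ relations and $c_3$ vanishing, and recognizing that the particular triple $\{i,i+1,j\}$ is the right choice to propagate the normalized subdiagonal outward by induction.
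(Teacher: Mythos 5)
Your proposal is correct and follows essentially the same route as the paper: normalize via \eqref{normalization}, then induct on $j-i$ using the vanishing of $C_{\{i,i+1,j\}}$ together with $t_{i,j}t_{j,i}=-1$ to force $t_{i,j}=\pm 1$ and hence skew-symmetry. Your packaging of the $c_3$ relation as the identity $t_{i,k}^{2}=t_{i,j}^{2}t_{j,k}^{2}$ is just a clean reformulation of the paper's step deriving $t_{j+m,j}+t_{j,j+m}=0$, and your treatment of the odd cycle-sums matches the paper's Proposition on skew-symmetric matrices.
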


Together with Theorem \ref{thm:SCS-skew-symmetric}, we get the following result about SCS matrices:

\begin{cor}\label{cor:skew-param}
Suppose $A\in\CC^{n\times n}$ is a SCS matrix with $c_1=0$, $c_2\ne 0$ and $c_3=0$.  
Then $A$ is ${\mathcal S}_n\ltimes {\mathcal D}_n$-conjugate to $\mathbbm{1}_{n}^{\wedge}$ (defined in  Theorem \ref{thm:SCS-skew-symmetric}) 
or $\mtx{
  0&  1&   1& 1\\
   -1&  0&  1&  -1\\
   -1&  -1&   0&  1\\
   -1&  1&  -1&  0
}
$
(for $n=4$ only).
\end{cor}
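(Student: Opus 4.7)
The plan is to simply chain Theorem~\ref{thm:c_3=0} with Theorem~\ref{thm:SCS-skew-symmetric}. First I would apply Theorem~\ref{thm:c_3=0} to $A$, which, under the hypotheses $c_1=0$, $c_2\neq 0$, $c_3=0$, produces a diagonal matrix $D\in\mathcal{D}_n$ and a normalized skew-symmetric matrix $T$ satisfying
\[
A \;=\; D^{-1}\bigl(\sqrt{-c_2}\,T\bigr)D.
\]
This already realizes $A$ as a $\mathcal{D}_n$-conjugate of a scalar multiple of a skew-symmetric matrix, so the task reduces to classifying the skew-symmetric factor $T$.

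Next I would verify that $T$ itself satisfies the hypotheses of Theorem~\ref{thm:SCS-skew-symmetric}. By Proposition~\ref{prop:symmetry}, $\mathcal{D}_n$-conjugation preserves every cycle-sum and scalar multiplication rescales cycle-sums homogeneously; therefore the SCS property transfers from $A$ through $\sqrt{-c_2}\,T$ and down to $T$. In particular $T\in\bw{2}\CC^{n}$ has symmetrized $c_2$ and $c_4$ values with $c_2(T) = -1\neq 0$, which is precisely the hypothesis of Theorem~\ref{thm:SCS-skew-symmetric}.

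Finally I would apply Theorem~\ref{thm:SCS-skew-symmetric}: for $n\neq 4$, it supplies an element $P\in\mathcal{S}_n\ltimes\mathcal{D}_n^{\pm}$ with $T=\lambda P\mathbbm{1}_n^{\wedge}P^{-1}$; for $n=4$, $T$ is conjugate (in the same group) to a scalar multiple either of $\mathbbm{1}_4^{\wedge}$ or of the exceptional $4\times 4$ matrix displayed in the statement. Since $\mathcal{S}_n\ltimes\mathcal{D}_n^{\pm}\subset\mathcal{S}_n\ltimes\mathcal{D}_n$, composing with the $D$-conjugation from step one yields a single $\mathcal{S}_n\ltimes\mathcal{D}_n$-conjugation putting $A$ into the asserted normal form, the overall scalar being determined by the value of $c_2(A)$ fixed in the normalization of Theorem~\ref{thm:c_3=0}.

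There is essentially no hard step: the two theorems cited do all the work, and the only bookkeeping is to check that the skew-symmetric matrix produced in the first step inherits the SCS hypotheses needed for the second, and that the two conjugating groups are nested so their composition lies in $\mathcal{S}_n\ltimes\mathcal{D}_n$. Both are immediate from Proposition~\ref{prop:symmetry} and the definitions.
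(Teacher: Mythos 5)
Your proposal is correct and is essentially the paper's own argument: the corollary is obtained exactly by chaining Theorem~\ref{thm:c_3=0} (diagonal conjugation reduces $A$ to a scalar multiple of a normalized skew-symmetric matrix) with the classification in Theorem~\ref{thm:SCS-skew-symmetric}, using that conjugation and scaling preserve the SCS property. Your closing remark that the scalar is pinned down by $c_2(A)$ is the right reading of the normal form, so nothing further is needed.
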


\begin{proof}[Proof of Theorem \ref{thm:c_3=0}]
By the normalization process \eqref{normalization}, there is $D\in {\mathcal D}_n$ such that 
$A=\sqrt{-c_2}D^{-1} {\mathcal N}(A)  D$. Then $T:={\mathcal N}(A)$ is a SCS matrix with $c_1(T)=0$, $c_2(T)=-1$, $c_3(T)=0$, 
 $t_{1,2}=t_{2,3}=\cdots=t_{n-1,n}=1$, and $t_{2,1}=t_{3,2}=\cdots=t_{n,n-1}=-1$.  It remains to show that
 $t_{i,i+k}\in\{1,-1\}$ for any $1\le i< i+k\le n$, thereof $t_{i+k,i}=c_2(T)/t_{i,i+k}=-t_{i,i+k}$ and  $T$ is skew-symmetric. 
Let us make induction on $k$. $k=1$ is obvious.
Suppose $t_{i,i+k}\in\{1,-1\}$ for all $k< m$ ($m\ge 2$) and all index pairs $(i,i+k)$ of $T$.   Then for any index pair $(j,j+m)$ of $T$,
$$
0=c_3(T)=t_{j,j+1}t_{j+1,j+m}t_{j+m,j}+t_{j,j+m}t_{j+m,j+1}t_{j+1,j}=t_{j+m,j}+t_{j,j+m}.
$$
Moreover, $1=c_2(T)=t_{j+m,j}t_{j,j+m}$.  Therefore, $t_{j,j+m}\in\{1,-1\}$.  The proof is done.
\end{proof}

\subsubsection{Case $c_2\ne 0$, $c_3\ne 0$}  

We consider the following Toeplitz matrix for  any $x\in\CC^{*}$:
\begin{equation}\label{SCS-canonical}
T_n(x):=
\mtx{
      0 &1 &x &x^2 &\cdots &x^{n-2}
\\ -1 &0 &1 &x &\cdots &x^{n-3}
\\ -\frac{1}{x} &-1 &0 &1 &\cdots &x^{n-4}
\\ -\frac{1}{x^{2}} &-\frac{1}{x} &-1 &0 &\cdots &x^{n-5}
\\ \vdots &\vdots &\vdots &\vdots &\ddots &\vdots
\\ -\frac{1}{x^{n-2}} &-\frac{1}{x^{n-3}} &-\frac{1}{x^{n-4}} &-\frac{1}{x^{n-5}} &\cdots &0
},
\end{equation}
where the  $(i,j)$ entry of $T_n(x)$ is exactly $\sgn(j-i)\cdot x^{j-i-\sgn(j-i)}$. 

For a permutation $w \in \mathfrak{S}_{n}$ write the word $w = w_{1}w_{2}\cdots w_{n}$ if  as a bijection on $[n]$ we have $w(i) = w_{i}$. 
A descent in $w$ is a position $i$ such that $w_{i}>w_{i+1}$. 
Let $des(w)$ denote the number of its descents.  
The Euler number $E(k,i)$ is the number of permutations $w\in \mathfrak{S}_{k}$ with exactly $i-1$ descents. (See \cite[Ch.~1.4]{Stanley}).
Given the description of the cycle-sums of the special Toeplitz matrix $T_{n}(x)$, the following is straightforward to verify.

\begin{theorem}\label{thm:eulerian}
The matrix $T_n(x)$ satisfies the SCS property. In particular, for $k\geq 2$ the cycle-sums of $T_{n}(x)$ are the (re-scaled, signed) Eulerian polynomials
\begin{eqnarray*}
c_{k}(T_{n}(x)) & =& \frac{1}{x^{k}} \sum_{w \in \mathfrak{S}_{k-1}} (-x^2)^{des(w)+1}
\\
& = & x^{-k} \sum_{i=1}^{k-1} E(k-1,i)\cdot (-x^{2})^{i}
.
\end{eqnarray*}
\end{theorem}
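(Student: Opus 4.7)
The plan is to compute $C_I(T_n(x))$ directly from the entry formula, observe that the result depends only on $|I|$, and then recognize the resulting polynomial in $x$ as an Eulerian polynomial. Once the cycle-sums are shown to depend only on $|I|$, the SCS property for the principal minors $D_S$ follows automatically from the lower-triangular change of coordinates in Proposition~\ref{thm:d_S=c_S}: each $D_S$ expands as a sum of products $C_{S_1}\cdots C_{S_k}$ over set-partitions, whose values then depend only on the shape $(|S_1|,\dots,|S_k|)$.

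Fix $I = \{j_1 < j_2 < \cdots < j_k\} \subset [n]$ and a cyclic word $(i_1,\ldots,i_k)$ with $i_1 = j_1$ and $(i_2,\ldots,i_k)$ a permutation of $(j_2,\ldots,j_k)$. Writing $a_{p,q} = \sgn(q-p)\, x^{q-p-\sgn(q-p)}$ and multiplying along the cycle gives
\[
\prod_{a=1}^{k} a_{i_a,i_{a+1}} \;=\; \Bigl(\prod_{a=1}^{k}\sgn(i_{a+1}-i_a)\Bigr) \, x^{\sum_a(i_{a+1}-i_a) \,-\, \sum_a\sgn(i_{a+1}-i_a)},
\]
where $i_{k+1} = i_1$. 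Two telescoping facts then finish the per-term computation: $\sum_a(i_{a+1}-i_a)$ vanishes cyclically; and if $d$ denotes the number of descents of the cyclic word, then $\sum_a\sgn(i_{a+1}-i_a) = (k-d)-d = k-2d$, while the product of signs is $(-1)^d$. Hence each term equals $(-1)^d\, x^{2d-k}$, depending only on $k$ and $d$.

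To count descents, observe that since $i_1 = j_1 = \min I$ the step $j_1 \to i_2$ is always an ascent and the wrap-around $i_k \to j_1$ is always a descent; all remaining descents lie in the linear word $i_2 i_3 \cdots i_k$. The unique order-preserving bijection $\{j_2,\ldots,j_k\} \to \{1,\ldots,k-1\}$ is descent-preserving, so orderings $(i_2,\dots,i_k)$ correspond bijectively with $w \in \mathfrak{S}_{k-1}$ via $d = des(w)+1$. Summing over all $(k-1)!$ choices yields
\[
C_I(T_n(x)) \;=\; \sum_{w\in\mathfrak{S}_{k-1}} (-1)^{des(w)+1} x^{2\,des(w)+2-k} \;=\; \frac{1}{x^k}\sum_{w\in\mathfrak{S}_{k-1}}(-x^2)^{des(w)+1},
\]
which is independent of $I$, giving the SCS property. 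Grouping permutations by descent number converts the sum into $x^{-k}\sum_{i=1}^{k-1} E(k-1,i)(-x^2)^i$ as claimed.

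The calculation is almost entirely bookkeeping; the only real subtlety is correctly decomposing the cyclic descent count $d$ into a fixed contribution (one forced ascent out of $\min I$ and one forced descent into $\min I$) plus the linear descent count of a uniquely determined $w \in \mathfrak{S}_{k-1}$. The telescoping and the recognition of Eulerian numbers are then routine.
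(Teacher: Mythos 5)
Your proposal is correct and takes essentially the same approach as the paper: expand each cycle term via the entry formula, telescope the exponent of $x$, convert the cyclic descent count (with the forced ascent out of and descent into $\min I$) into $des(w)+1$ for the standardized $w\in\mathfrak{S}_{k-1}$, and sum. Your write-up just spells out the descent bookkeeping in slightly more detail than the paper does.
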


\begin{proof}
Let $T_{n}(x)=(t_{ij})_{n\times n}$. Every summand in a $k$-cycle-sum $C_{I}(T_{n}(x))$ has the form
\begin{eqnarray*}
&& t_{i_1,i_2}t_{i_2,i_3}\cdots  t_{i_{k},i_{1}}
\\
&=& 
\sgn(i_2-i_1) x^{i_2-i_1-\sgn(i_2-i_1)}
\sgn(i_3-i_2) x^{i_3-i_2-\sgn(i_3-i_2)}
\cdots
\sgn(i_1-i_k) x^{i_1-i_k-\sgn(i_1-i_k)}
\\
&=&
\left(\sgn(i_2-i_1)\sgn(i_3-i_2)\cdots\sgn(i_1-i_k) \right)
x^{-\sgn(i_2-i_1)-\sgn(i_3-i_2)-\cdots -\sgn(i_1-i_k)},
\end{eqnarray*}
which solely  depends on the relative order of the indices in the circle $(i_1,i_2,\cdots,i_k)$. 
In particular, we can express each term in terms of descents so that the formula for the cycle-sums follows.
Suppose $i_1=\min\{i_1,i_2,\cdots,i_k\}$.  The circle $(i_1,i_2,\cdots,i_k)$ corresponds to the permutation $w \in \mathfrak{S}_{k-1}$ with the same relative order as $i_2i_3\cdots i_k$, such that
\[
 t_{i_1,i_2}t_{i_2,i_3}\cdots  t_{i_{k},i_{1}} = (-1)^{des(w)+1}\cdot x^{2(des(w)+1)-k}.
\]
Therefore, $C_{I}(T_{n}(x))=C_{J}(T_{n}(x))$ for any $I, J\subset [n]$ and $|I|=|J|$. \end{proof}

The matrix $\mathbbm{1}_{n}^{\wedge}$ 
in Theorem \ref{thm:SCS-skew-symmetric} and Corollary \ref{cor:skew-param} 
is exactly $T_n(1)$. 
In fact, it turns out that every general SCS matrix with $c_{1}=0$ is  a ${\mathcal S}_{n}\ltimes {\mathcal D}_{n}$-conjugate of the Toeplitz matrix $\lambda T_{n}(x)$ for some $\lambda\in\CC$.

\begin{theorem}\label{thm:c_2!=0,c_3!=0}
Suppose $A\in\CC^{n\times n}$ has  symmetrized $c_k$ values for $k=1,2,3$, with  $c_1=0$, $c_2\ne 0$, and $c_3\ne 0$. Then $A$ is a SCS matrix;
and $A$ is ${\mathcal S}_n\ltimes {\mathcal D}_n$-conjugate to $\lambda T_{n}(x)$,
where    $\lambda^2=-c_2$ and $\lambda^3(x-\frac{1}{x})=c_3$.
\end{theorem}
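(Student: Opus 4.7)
My plan is to normalize $A$ via the process \eqref{normalization}, obtaining $T := {\mathcal N}(A)$ with $t_{ii} = 0$, $t_{i,i+1} = 1$, $t_{i+1,i} = -1$, $c_1(T) = 0$, $c_2(T) = -1$, and $c_3(T) = c_3(A)/(-c_2(A))^{3/2}$. Since the normalization is a composition of a diagonal shift, scaling, and diagonal conjugation, and these all respect the symmetrized-$c_k$ hypotheses, it suffices to show $T$ is ${\mathcal S}_n \ltimes {\mathcal D}_n$-conjugate to $T_n(x)$ where $x - 1/x = c_3(T)$; the required relations $\lambda^2 = -c_2$ and $\lambda^3(x-1/x) = c_3$ then follow from tracking the scaling factor $\lambda = \sqrt{-c_2(A)}$.

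The symmetrized $c_2(T) = -1$ yields $t_{ij}t_{ji} = -1$ for $i \neq j$. The $c_3(T)$ identity applied to a triple $\{i,j,k\}$ with $i<j<k$, after substituting $t_{ji} = -1/t_{ij}$ and similarly for $t_{kj}$ and $t_{ki}$, simplifies to
\[
\rho_{ijk} - \frac{1}{\rho_{ijk}} = c_3(T), \qquad \rho_{ijk} := \frac{t_{ik}}{t_{ij}t_{jk}},
\]
so each $\rho_{ijk}$ is one of the two roots $\{x, -1/x\}$ of $y^2 - c_3 y - 1 = 0$. The Toeplitz matrix $T_n(x)$ is characterized within this family by the condition $\rho_{ijk} \equiv x$, so the remaining task is to show that any pattern of $\rho$'s in $\{x, -1/x\}$ compatible with the 3-cycle equations is equivalent via ${\mathcal S}_n \ltimes {\mathcal D}_n$-conjugation to the constant-$x$ pattern.

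I would proceed by induction on $n$. The base case $n=3$ is immediate, since the single ratio $\rho_{123}=t_{13}$ equals either root, each realizing some $T_3(\cdot)$. For the inductive step, by the induction hypothesis applied to $T_{\{1,\ldots,n-1\}}$ together with the subgroup ${\mathcal S}_{n-1} \ltimes {\mathcal D}_{n-1}$ of ${\mathcal S}_n \ltimes {\mathcal D}_n$ fixing index $n$, I may assume $T_{\{1,\ldots,n-1\}} = T_{n-1}(x)$. The triples $\{i, i+1, n\}$ and $\{i, j, n\}$ then constrain the last row and column up to the $\{x, -1/x\}$ ambiguities, linked together by the 4-index consistency identity $\rho_{ijk}\rho_{ikl} = \rho_{ijl}\rho_{jkl}$ (obtained by computing $t_{il}/(t_{ij}t_{jk}t_{kl})$ in two ways).

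The main obstacle --- and the heart of the proof --- is producing an explicit element of ${\mathcal S}_n \ltimes {\mathcal D}_n$ that removes the remaining discrete ambiguities. The consistency identity forces the sign departures to form a $\ZZ/2$-valued $2$-cocycle on the order complex of $[n]$; since that complex is contractible, the cocycle is a coboundary, so the pattern should be equivalent to the trivial (Toeplitz) one by a suitable permutation and diagonal rescaling. The $n=4$ case is instructive: the normalized matrix with $t_{14} = -1$ (instead of the Toeplitz value $x^2$) satisfies all the 3-cycle constraints, and direct computation shows it is conjugate to $T_4(x)$ by the non-obvious element $((1,2,4,3),\ {\rm diag}(-x, 1, -1, 1/x))$. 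For general $n$ one constructs the required conjugation by composing analogous explicit moves guided by the coboundary structure. Once $T$ is reduced to $T_n(x)$, the full SCS property of $A$ follows from Theorem~\ref{thm:eulerian}.
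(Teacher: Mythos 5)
Your reduction and bookkeeping coincide with the paper's: normalize via \eqref{normalization}, use $t_{ij}t_{ji}=-1$, encode the symmetrized $c_3$ condition as $\rho_{ijk}-1/\rho_{ijk}=c_3$ with $\rho_{ijk}=t_{ik}/(t_{ij}t_{jk})\in\{x,-1/x\}$, and induct assuming the leading principal block is already Toeplitz. The gap sits exactly where you flag ``the heart of the proof'': the claim that the residual $\{x,-1/x\}$ ambiguities are removable by an element of ${\mathcal S}_n\ltimes{\mathcal D}_n$ because they ``form a $\ZZ/2$-valued $2$-cocycle on a contractible complex, hence a coboundary.'' That mechanism does not yield the conclusion. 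First, each $\rho_{ijk}$ is a ratio of products of entries around closed cycles, so it is invariant under ${\mathcal D}_n$-conjugation: diagonal rescaling can never change the pattern (it only restores the normalization $t_{i,i+1}=1$ after a permutation). Second, a permutation acts on the pattern by relabeling indices and replacing $\rho$ by $-1/\rho$ on triples whose order gets scrambled; it does not act by adding coboundaries, so ``the pattern is a coboundary'' does not imply ``the pattern is ${\mathcal S}_n$-equivalent to the constant-$x$ one.'' Third, the constraint coming from your $4$-index identity is really the multiset equality $\{\rho_{ijk},\rho_{ikl}\}=\{\rho_{ijl},\rho_{jkl}\}$ (for $x\neq\pm1,\pm i$ the three possible products $x^{2}$, $-1$, $x^{-2}$ are pairwise distinct), which is strictly stronger than the mod-$2$ cocycle condition: for instance $\rho_{123}=\rho_{134}=-1/x$, $\rho_{124}=\rho_{234}=x$ is a perfectly good $\ZZ/2$-coboundary but is not realizable. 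Reducing mod $2$ discards exactly the rigidity that makes the classification work, and your $n=4$ computation plus ``analogous explicit moves guided by the coboundary structure'' is an assertion of the general step, not a proof of it.

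For comparison, the paper closes this step concretely. In the inductive step with leading block $T_n(x)$ and the new column normalized so its last entry is $1$, the triples $\{i,n,n+1\}$ force each new entry $y_i$ into $\{x^{n-i},\,-x^{n-i-2}\}$, and the adjacent-triple relation $c_3=y_i/y_{i+1}-y_{i+1}/y_i$ rules out a Toeplitz value at $i$ followed by a minus value at $i+1$; hence the minus values occupy an initial segment $\{1,\dots,t\}$, and for each such $t$ the explicit element $P_\sigma D$ with $\sigma=(t+1,t+2,\dots,n+1)$ and a specific diagonal $D$ conjugates $T_{n+1}(x)$ to the matrix at hand. Some argument of this type --- a classification of the admissible patterns in the new row/column together with an explicit permutation realizing each --- is what your proposal still needs; contractibility of the order complex cannot substitute for it. The remaining parts of your outline (tracking $\lambda=\sqrt{-c_2}$, and deducing the SCS property from Theorem~\ref{thm:eulerian} once conjugacy to $\lambda T_n(x)$ is established) are fine and agree with the paper.
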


\begin{proof}
After re-homogenizing, we can assume $c_{2}=-1$  and $\lambda=1$. 
By induction we may assume that the statement is true for a fixed $n$ with $n\geq 3$. Let $B$ be the matrix with $T_{n}(x)$ in the upper-left corner, padded by the column $(y_{1},\ldots,y_{n-1},y_{n},0)^{t}$, and the row $(z_{1},\ldots,z_{n})$.  We may assume, since $y_{i}z_{i}=c_{2}=-1$, that $y_{i}\neq 0$ and $z_{i}=-1/y_{i}$. Further, by conjugating by $diag(1,\ldots,1,y_{n})$ and renaming each $y_{i}/y_{n}$ by $y_{i}$, that the matrix $B$ is the matrix padded by the column $(y_{1},\ldots,y_{n-1},1,0)^{t}$, and the row $(-1/y_{1},\ldots,-1/y_{n-1},-1)$.

Now setting all instances of $c_{3}(B)$ to be equal, we have equations of the following form:
\[
c_{3} = x-\frac{1}{x} =  \frac{y_{i}}{x^{n-1-i}} - \frac{x^{n-1-i} }{y_{i}} .
\]
Solving these equations, we find that $y_{i} = -x^{n-i-2}$ or $y_{i} = x^{n-i}$.  In particular, every entry in the padded row / column must be a power of $x$.

We also have equations of the form
\[
c_{3} =   \frac{y_{i}}{y_{i+1}} - \frac{y_{i+1} }{y_{i}}.
\]
Adjacent entries in the padded row/column must either increase or decrease by one power of $x$, and if they increase as $i$ increases, the sign changes.

If all $y_{i}=x^{n-i}$, then $B=T_{n+1}(x)$.  Otherwise, we find the greatest $t$ such that $y_{t+1}=x^{n-t-1}$ but $y_{t}=-x^{n-t-2}$.
Then  $y_{t-1}=-x^{n-t-1}$, $y_{t-2}=-x^{n-t}$, and so on. 

Finally, all such matrices $B$ are conjugate to $T_{n+1}(x)$:  let 
\[
D={\rm diag}(\underbrace{1,\cdots,1}_{t},-\frac{1}{x^{n-t-2}},\underbrace{x,\cdots,x}_{n-t}),\qquad
\sigma=(t+1, t+2,\cdots,n+1)\in \mathfrak{S}_{n+1},
\]  
then $(P_{\sigma}D)T_{n+1}(x)(P_{\sigma}D)^{-1}=B$.
\end{proof}


\section{Polynomial relations among symmetrized cycle-sums}\label{sec:polynomials}

In this section we analyze the ideals of the varieties of symmetrized cycle-sums and symmetrized principal minors of symmetric, skew-symmetric, and general $n\times n$ matrices. 

\subsection{The case of symmetric SCS matrices}\label{sec:sym relations}

The following is straightforward to verify, and implies, in particular, that the variety of symmetrized principal minors / cycle-sums of symmetric matrices is toric because it provides a monomial parametrization (see, for instance, \cite[Ch.~7.1]{miller2005combinatorial}).
\begin{lemma}\label{symParam}
Suppose  $A =  a\cdot I_{n}  + b\cdot \mathbbm{1}_{n}$
with $a,b \in \CC$. 
Then $d_{1}= c_{1} = a+b$ and 
for all $S\subset [n]$ with $|S|\geq 2$
\[
D_{S} = d_{|S|} =a^{|S|-1}(a+|S|b) 
,\quad\quad
\text{and }\quad \quad
 \quad C_{S} = c_{|S|} = (|S|-1)! b^{|S|}.\]
\end{lemma}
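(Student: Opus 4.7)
The plan is to compute $D_S$ and $C_S$ directly from their definitions by exploiting the very simple structure of $A = aI_n + b\mathbbm{1}_n$. First I would handle the cycle-sums, which are the easier piece: for $|S|=k\geq 2$, every summand in $C_S$ is a product $a_{i_1,i_2}a_{i_2,i_3}\cdots a_{i_k,i_1}$ where the indices $i_1,\ldots,i_k$ are pairwise distinct, so each of the $k$ factors is an off-diagonal entry of $A$. Since every off-diagonal entry of $aI_n+b\mathbbm{1}_n$ equals $b$, each such product equals $b^k$. The number of summands in $C_S$ is the number of cyclic arrangements of $k$ distinct elements with $i_1$ fixed at $\min S$, which is exactly $(k-1)!$, giving $C_S=(k-1)!b^k$.

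Next, for the principal minor, I would observe that the $k\times k$ principal submatrix $A_S$ is $aI_k+b\mathbbm{1}_k$, independent of which $S$ of size $k$ we pick. The matrix $\mathbbm{1}_k$ is rank one with eigenvalue $k$ on the all-ones vector and $0$ on its orthogonal complement, so the eigenvalues of $aI_k+b\mathbbm{1}_k$ are $a+kb$ with multiplicity $1$ and $a$ with multiplicity $k-1$. Taking the product yields $D_S=\det(A_S)=a^{k-1}(a+kb)$.

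The case $|S|=1$ is immediate: both $D_{\{i\}}$ and $C_{\{i\}}$ equal the diagonal entry $a_{i,i}=a+b$. Since the values $D_S$ and $C_S$ depend only on $|S|$, it is justified to write them as $d_{|S|}$ and $c_{|S|}$, confirming in particular that $A$ has the SCS property.

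Since both computations are one-line invocations of a linear-algebra fact and an elementary combinatorial count respectively, there is no real obstacle; the only thing to be careful about is the convention $i_1=\min S$ in Definition~\ref{def:cycle-sums}, which precisely ensures each cyclic class is counted once and yields the factor $(k-1)!$ rather than $k!$.
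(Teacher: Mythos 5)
Your proof is correct and is exactly the direct verification the paper has in mind when it states the lemma is ``straightforward to verify'' (no proof is given there). Both computations — the $(k-1)!$ count of cyclic orderings with $i_1=\min S$ fixed, each contributing $b^{k}$, and the eigenvalue computation $\det(aI_k+b\mathbbm{1}_k)=a^{k-1}(a+kb)$ — are sound.
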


\begin{theorem}\label{thm:sym}
Let $\J_{n}^{\circ}$ denote the ideal of the variety $Z_{n}^{\circ} \cap S^{n}\CC^{2} \cap U_{c_{0}=1}$.
 If $n=3$ then $\J^{\circ}_{n}$ is prime, and generated by a single equation, 
 \[
 \J^{\circ}_{3}=
 \langle 4 c_{2}^{3}-c_{3}^{2} \rangle 
 .\]
For $n\geq 4$ $\J^{\circ}_{n}$ is
the prime ideal generated by the following $n-2$ binomials:
\[
\left\{4 c_{2}^{3}-c_{3}^{2}\right\} \cup
\left\{ (s-1)!c_{2}c_{s-2} - (s-3)!c_{s} \;\mid\; 4\leq s\leq n
\right\}.
\]
\end{theorem}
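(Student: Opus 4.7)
The plan is to exhibit an explicit parametrization of $V := Z_n^\circ \cap S^n\CC^2 \cap U_{c_0=1}$ using Theorem~\ref{thm:SCS-symmetric} and Lemma~\ref{symParam}, and then to identify the coordinate ring $\CC[V]$ with $R/I$, where $R=\CC[c_1,\dots,c_n]$ and $I$ denotes the ideal generated by the listed binomials. By those two results, $V$ is the image of the polynomial map
\[
\psi\colon \mathbb{A}^2 \longrightarrow \mathbb{A}^n, \qquad (c_1, b) \longmapsto \bigl(c_1,\, b^2,\, 2b^3,\, 6b^4,\, \dots,\, (n-1)!\,b^n\bigr),
\]
so that $\J_n^\circ = \ker \psi^*$ for the induced ring homomorphism $\psi^*\colon R\to\CC[c_1,b]$.

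First I would verify $I \subseteq \J_n^\circ$ by direct substitution: $\psi^*(4c_2^3 - c_3^2) = 4b^6 - (2b^3)^2 = 0$, and for each $4\le s\le n$ the binomial $(s-1)!\,c_2c_{s-2} - (s-3)!\,c_s$ maps to $(s-1)!(s-3)!\,b^s - (s-3)!(s-1)!\,b^s = 0$. For the reverse inclusion, the strategy is elimination: the binomial indexed by $s$ is linear in $c_s$ and yields $c_s \equiv (s-1)(s-2)\,c_2\,c_{s-2} \pmod I$. Iterating this from $c_4 \equiv 6c_2^2$ produces, by induction, polynomials $\phi_s(c_2, c_3) \in \CC[c_2, c_3]$ satisfying $c_s \equiv \phi_s(c_2, c_3) \pmod I$ for every $s \geq 4$. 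This yields a surjective ring homomorphism
\[
\pi\colon \CC[c_1,c_2,c_3]/(4c_2^3 - c_3^2) \twoheadrightarrow R/I.
\]

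To show $\pi$ is an isomorphism I would construct an inverse $\tau\colon R \to \CC[c_1,c_2,c_3]/(4c_2^3 - c_3^2)$ defined by $c_i \mapsto c_i$ for $i\leq 3$ and $c_s \mapsto \phi_s(c_2,c_3)$ for $s \geq 4$, then verify $\tau$ vanishes on the generators of $I$; the required identity follows immediately from the defining recursion $\phi_s = (s-1)(s-2)\,c_2\,\phi_{s-2}$. Hence $R/I \cong \CC[c_1,c_2,c_3]/(4c_2^3-c_3^2) \cong \CC[c_1] \otimes \CC[t^2,t^3]$, the latter embedding as a subring of $\CC[c_1, t]$ via $c_2 \mapsto t^2$, $c_3 \mapsto 2t^3$, and therefore being an integral domain. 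Consequently $I$ is prime. Composing $\pi$ with the embedding into $\CC[c_1,t]$ gives a factorization of $\psi^*$ through $R/I$, so in fact $\ker\psi^* = I$, proving $\J_n^\circ = I$. The case $n=3$ is the same argument with no elimination step. The main technical point is the construction of the inverse $\tau$ and the verification that it descends to $R/I$, but this reduces to a single recursive identity and so is not a serious obstacle.
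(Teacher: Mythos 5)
Your proposal is correct and follows essentially the same route as the paper: both use Theorem~\ref{thm:SCS-symmetric} and Lemma~\ref{symParam} to reduce to the monomial parametrization $(c_1,b)\mapsto(c_1,b^2,2b^3,\dots,(n-1)!\,b^n)$, check that the binomials vanish on its image, and establish primality by eliminating $c_s$ for $s\ge 4$ so as to identify the quotient with the coordinate ring of the cuspidal curve $4c_2^3=c_3^2$ (times the $c_1$-line). The only difference is in the closing step: you deduce $\J^{\circ}_n=\ker\psi^*$ directly by factoring $\psi^*$ through $R/I$ with an injective second arrow, whereas the paper concludes via an inclusion of prime ideals of the same dimension; both closings are valid.
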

\begin{proof} The case $n=3$ can be verified immediately by direct computation.
By Theorem~\ref{thm:SCS-symmetric} the pull-back of the ``symmetrized principal minors'' condition to the space of matrices cuts out (as a set) a space of matrices of the form $A = a\cdot I_{n} + b\cdot \mathbbm{1}_{n}$, with $\mathbbm{1}_{n}$ the $n\times n$ all-ones-matrix. 
There are two cases to consider, depending on whether we invert $b$ or not.

First when $b=0$, in which case Lemma~\ref{symParam} implies that $c_{s}=0$ for $s\geq 2$. Thus as a set, we have identified a subscheme of $\V(\J^{\circ}_{n})$ supported on the line defined by $\langle c_{s} \mid 2\leq s \leq n\rangle$.  This ideal, however, is generally not radical, and (loosely) reflects the different orders of vanishing of the cycle-sums.

Now assume  $b\neq 0$. Set $J= \langle \left\{4 c_{2}^{3}-c_{3}^{2}\right\} \cup
\left\{ (s-1)!c_{2}c_{s-2} - (s-3)!c_{s} \;\mid\; 4\leq s\leq n
\right\} \rangle.$
It is straightforward to check that the zeroset of $J$ contains the image of the paramatrization  given in Lemma~\ref{symParam}. On the other hand, $J$ is the ideal of the graph in $S^{n}\CC^{2}\cap U_{c_{0}=1}$ of the curve $\langle 4c_{2}^{3}-c_{3}^{2}\rangle  \subset \CC\{c_{2},c_{3}\}$, given by the monomial functions $\{ c_{s} = \frac{(s-1)!}{(s-3)!}c_{2}c_{s-2} \mid 4\leq s\leq n\}$, where for $s\geq 5$ we recursively replace $c_{s-2}$ until we obtain a monomial in $c_{2}$ and $c_{3}$. Being the ideal of the graph of an irreducible curve, $J$ is prime. So the inclusion $J \subset I(Z_{n}^{\circ} \cap S^{n}\CC^{2}\cap U_{c_{0}=1})$, is an inclusion of prime ideals of the same dimension, so it must be an equality. Finally, the radical of the ideal obtained in the case $b=0$ contains $J$, and thus corresponds to an embedded line in $Z_{n}^{\circ} \cap S^{n}\CC^{2}\cap U_{c_{0}=1}$.
%
%
%
%
%
\end{proof}

\begin{example}\label{ex:J4}
If $n=4$, then elimination (in \texttt{Macaulay2}) reveals that
the symmetrized ideal of relations among cycle sums  is the intersection 
  \[
\langle3 c_{3}^{2}-2 c_{2} c_{4},6 c_{2}^{2}-c_{4} \rangle
\cap
\langle c_{4},c_{3}^{2},c_{2}^{2} c_{3},c_{2}^{3} \rangle
,\]
the first of which is the prime ideal of $\tau(\nu_{4}\PP^{1})\cap \{c_{0}=1\}$ and corresponds to $\J_{4}^{\circ}$ in Theorem~\ref{thm:sym}, and the second of which is a non-prime ideal supported on the vanishing set of the cycle-sums $\{c_{2},c_{3},c_{4}\}$. The radical of the second ideal evidently contains the first and (geometrically) corresponds to a line embedded in the scheme supported on $\tau(\nu_{4}\PP^{1})\cap \{c_{0}=1\}$.

To recover the relations amongst symmetrized principal minors a straightforward elimination calculation (again  in \texttt{Macaulay2}) produces the intersection
\[
\langle\begin{smallmatrix} 3 {d}_{2}^{2}-4 {d}_{1} {d}_{3}+{d}_{4},\\ 2 {d}_{1} {d}_{2} {d}_{3}-3 {d}_{1}^{2} {d}_{4}-3 {d}_{3}^{2}+4 {d}_{2} {d}_{4}\end{smallmatrix} \rangle
\cap
\left \langle 
\begin{smallmatrix}
2 {d}_{2} {d}_{3}^{2}-{d}_{2}^{2} {d}_{4}-4 {d}_{1} {d}_{3} {d}_{4}+3 {d}_{4}^{2},&{d}_{2}^{2} {d}_{3}-2 {d}_{1} {d}_{3}^{2}-2 {d}_{1} {d}_{2} {d}_{4}+3 {d}_{3} {d}_{4},\\
2 {d}_{1} {d}_{2} {d}_{3}+{d}_{1}^{2} {d}_{4}-{d}_{3}^{2}-2 {d}_{2} {d}_{4},&{d}_{2}^{3}+2 {d}_{1}^{2} {d}_{4}-3{d}_{2} {d}_{4},\\
{d}_{1} {d}_{2}^{2}+2 {d}_{1}^{2} {d}_{3}-2 {d}_{2} {d}_{3}-{d}_{1} {d}_{4},&2 {d}_{1}^{2} {d}_{2}-{d}_{2}^{2}-{d}_{4},{d}_{1}^{4}-{d}_{4}
      \end{smallmatrix}
\right \rangle
.\]
 The geometric structure of this decomposition is less evident in principal minor coordinates, but because the degree and number of variables are small we can still perform the computations. The first ideal is prime. The second ideal is not, but has radical $\langle {d}_{1}^{2}-{d}_{2},{d}_{1}^{3}-{d}_{3},{d}_{1}^{4}-{d}_{4} \rangle$, and one can check that the radical of the second ideal contains the first.  In general, the elimination calculation using symmetrized principal minors becomes difficult once $n\geq 5$.
\end{example}

The following characterizes the principal minors of the symmetric E-rank one matrices.
\begin{prop} \cite[Prop.~5.2]{Oeding_tangential} The image of the principal minor map of $n\times n$ symmetric matrices of E-rank one is the tangential variety of the Segre product of $n$ projective lines.
\end{prop}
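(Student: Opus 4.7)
The strategy is to exhibit an explicit rational parametrization of the symmetric E-rank one matrices and then recognize the resulting principal minor image as the standard parametrization of the tangential variety $\tau(\mathrm{Seg}(\PP^1\times\cdots\times\PP^1))$.

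First I would observe that a symmetric $n\times n$ matrix $A=(a_{ij})$ has E-rank one exactly when its off-diagonal pattern factors as $a_{ij}=v_iv_j$ for some $V=(v_1,\dots,v_n)^T\in\CC^n$; hence $A=\tilde{D}+VV^T$ with $\tilde{D}=\mathrm{diag}(e_1,\dots,e_n)$ and $e_i=a_{ii}-v_i^2$. For any $S\subseteq[n]$ the matrix determinant lemma applied to $A_S=\tilde{D}_S+V_S V_S^T$ yields
\[
D_S(A)=\prod_{i\in S}e_i\;+\;\sum_{k\in S}v_k^2\prod_{i\in S\setminus\{k\}}e_i,
\]
where the degenerate case $e_i=0$ is handled by continuity at the end.

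Next, under the canonical identification $\CC^{\mathcal{P}(n)}\cong(\CC^2)^{\otimes n}$ in which the basis vector indexed by $S$ corresponds to $\chi_S\in\{0,1\}^n$, I would set $\mathbf{u}_i=(1,e_i)$ and $\mathbf{w}_i=(0,v_i^2)$ in $\CC^2$. A direct coordinate-wise check gives $(\mathbf{u}_1\otimes\cdots\otimes\mathbf{u}_n)_{\chi_S}=\prod_{i\in S}e_i$, while $(\mathbf{u}_1\otimes\cdots\otimes\mathbf{w}_k\otimes\cdots\otimes\mathbf{u}_n)_{\chi_S}$ vanishes for $k\notin S$ (as $(\mathbf{w}_k)_0=0$) and equals $v_k^2\prod_{i\in S\setminus\{k\}}e_i$ for $k\in S$. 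Summing, the formula above says exactly that
\[
\phi(A)=\mathbf{u}_1\otimes\cdots\otimes\mathbf{u}_n\;+\;\sum_{k=1}^n \mathbf{u}_1\otimes\cdots\otimes\mathbf{w}_k\otimes\cdots\otimes\mathbf{u}_n,
\]
which is the classical ``point plus tangent vector'' parametrization of the affine cone over $\tau(\mathrm{Seg}(\PP^1\times\cdots\times\PP^1))$.

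For the reverse inclusion I would invoke the gauge freedom in the tangential parametrization: any tangent vector at $\mathbf{u}_1\otimes\cdots\otimes\mathbf{u}_n$ can be normalized so that its $k$-th factor has vanishing first coordinate (by absorbing the parallel component into a rescaling of $\mathbf{u}_k$), and any Segre point can be normalized so that each $\mathbf{u}_i$ has first coordinate $1$. Thus the $2n$-parameter map $(e_i,v_i^2)\mapsto A\mapsto \phi(A)$ surjects onto a dense open subset of the affine cone over $\tau(\mathrm{Seg}(\PP^1\times\cdots\times\PP^1))$, and both sides are irreducible of dimension $2n$. Passing to Zariski closure completes the identification. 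The main technical obstacle is ensuring the degenerate loci --- some $v_i=0$ forcing a whole row/column to be purely diagonal, some $e_i=0$ making $\tilde{D}_S$ singular, or $\mathbf{u}_i$ with vanishing first coordinate preventing the standard normalization --- still map into $\tau$; these cases are handled uniformly by closure once the generic identification is in place.
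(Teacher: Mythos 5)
This proposition is quoted in the paper from \cite[Prop.~5.2]{Oeding_tangential} without a proof, and your argument is essentially the standard one given in that reference: write an E-rank-one symmetric matrix as $\tilde D + VV^{T}$, expand the principal minors by the matrix determinant lemma, and match the result term-by-term with the ``point plus tangent vector'' parametrization of $\tau(\Seg(\PP^1\times\cdots\times\PP^1))$ under the identification $\CC^{\P(n)}\cong(\CC^2)^{\otimes n}$. The computation and the normalization/closure argument for the reverse inclusion are correct (over $\CC$ every $\beta_k$ is a square, so setting $\mathbf{w}_k=(0,v_k^2)$ loses nothing), so the proof is sound and matches the cited approach.
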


A weaker version of the main theorem in \cite{OedingRaicu} (which was a conjecture of Landsberg and Weyman \cite{LanWey_tan}, and proved set-theoretically in \cite{Oeding_tangential}) is the following
\begin{theorem}
The ideal of the tangential variety $\tau(\Seg(\PP^{1}\times\dots \times \PP^{1}))$ is generated by the Landsberg-Weyman equations (a specific set of quadric, cubic, and quartic polynomials).
\end{theorem}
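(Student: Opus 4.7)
My plan is to exploit the $G$-equivariance of $\tau := \tau(\Seg(\PP^1 \times \cdots \times \PP^1))$ under $G = \SL(2)^{\times n} \rtimes \mathfrak{S}_n$ and to prove the statement one isotypic component at a time. Because both $\tau$ and the space spanned by the Landsberg--Weyman equations are $G$-stable, the homogeneous ideal $I(\tau) \subset \Sym^\bullet((\CC^2)^{\otimes n})^*$ decomposes as $\bigoplus_{d,\lambda} V_\lambda^{\oplus m_{d,\lambda}}$, and the task is to compute each multiplicity $m_{d,\lambda}$ and exhibit a highest-weight vector in each component that actually contributes a minimal generator.

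First I would verify the easy containment: evaluate each Landsberg--Weyman quadric, cubic, and quartic on a general tangent point
\[
p \;=\; a_1 \otimes \cdots \otimes a_n \;+\; \sum_{i=1}^{n} a_1 \otimes \cdots \otimes v_i \otimes \cdots \otimes a_n
\]
and check that it vanishes identically in the $a_i$ and $v_i$. Combined with the set-theoretic result of \cite{Oeding_tangential}, this shows that the Landsberg--Weyman equations define a subscheme whose reduced structure is $\tau$, so what remains is to upgrade this to an ideal-theoretic statement: no new generators of $I(\tau)$ appear in degrees $\geq 5$, and no further generators are needed in degrees $\leq 4$ beyond the LW list. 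The degrees $\leq 4$ check can be made by computing characters: one compares the $G$-character of the ambient $\Sym^d$, of the coordinate ring of $\tau$ (via a parametrization), and of the $G$-submodule spanned by LW polynomials, and verifies they agree.

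For the higher-degree statement I would apply Weyman's geometric technique. Construct a desingularization $\pi\colon Z \to \tau$ as a projective bundle over $\Seg(\PP^1 \times \cdots \times \PP^1)$ whose fiber records a tangent direction, and then use the Koszul complex of $\tau \hookrightarrow \PP((\CC^2)^{\otimes n})$ together with the decomposition of $R\pi_* \Osh_Z$ into $\SL(2)^{\times n}$-isotypic summands. Bott-vanishing on the homogeneous bundle $Z$ should then force the relevant Betti numbers of $I(\tau)$ to vanish for $d \geq 5$, while the residual $\mathfrak{S}_n$-action prunes the surviving Schur modules to exactly those already on the Landsberg--Weyman list. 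The main obstacle will be this final step: arranging $Z$ so that Weyman's spectral sequence degenerates, and then carrying out the Bott computations uniformly in $n$. The combinatorics of which Schur functors could \emph{a priori} contribute a generator in high degree is delicate—hyperdeterminant-type polynomials inhabit arbitrarily high degrees in the ambient ring—and the $\mathfrak{S}_n$-equivariance is indispensable for keeping the case analysis finite and for reducing the problem to a manageable family of tableau shapes.
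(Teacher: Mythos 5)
This theorem is not proved in the paper at all: it is quoted as ``a weaker version of the main theorem in [Oeding--Raicu]'', i.e.\ it is the resolution of the Landsberg--Weyman conjecture, imported as a citation. So there is no in-paper argument to compare against, and the real question is whether your sketch would constitute an independent proof. It would not, because the step that carries all of the difficulty is the one you dispatch with ``Bott-vanishing \ldots should then force the relevant Betti numbers of $I(\tau)$ to vanish for $d\geq 5$.'' That is precisely the content of the theorem. The natural desingularization of $\tau$ (a bundle over the Segre recording a tangent direction) pushes forward to bundles that are \emph{not} irreducible homogeneous bundles, Bott's theorem does not apply directly, and the Koszul/geometric-technique spectral sequence is not known to degenerate here; carrying this program out uniformly in $n$ for the tangential variety of a Segre has not been done. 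The published proof in Oeding--Raicu in fact avoids the geometric technique entirely: it uses Raicu's ``generic equations'' / polarization machinery, replacing $\SL(2)^{\times n}$-isotypic analysis by representations of products of symmetric groups and reducing the generation statement to an explicit combinatorial problem about tableaux. Your plan identifies the right equivariance framework but substitutes a hope for the theorem's core.

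A secondary gap: your degree $\leq 4$ check compares characters of $\Sym^d$ against ``the coordinate ring of $\tau$ (via a parametrization).'' The isotypic decomposition of the homogeneous coordinate ring of $\tau(\Seg(\PP^1\times\cdots\times\PP^1))$ is itself one of the main outputs of the Oeding--Raicu argument, not an input you can assume; without it, the multiplicity count in low degree is circular. The easy containment (vanishing of the LW equations on a general tangent vector) and the appeal to the set-theoretic result of \cite{Oeding_tangential} are fine, but they only give that the LW equations cut out $\tau$ set-theoretically, which the paper already acknowledges as the previously known, strictly weaker statement.
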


Suppose $A$ is an $n\times n$ symmetric matrix with generic entries. Let $\I_{n}$ denote the ideal of relations amongst the principal minors of $A$. 
The zero set of the symmetrization  $\I_{n} \cap S^{n}\CC^{2}$ is a subvariety of the tangential variety of the Segre product of $n$ projective lines subject to additional symmetry. In particular, it is the tangential variety of the degree $n$ rational normal curve (the degree $n$ Veronese embedding of the projective line). The minimal generators of this ideal (were likely known classically) are also determined as a special case of the main theorem in \cite{OedingRaicu}.  See \cite[Ch.~10]{DolgachevAG} for an in-depth investigation such classical varieties; the tangential surface of the rational normal curve is discussed in \cite[Ex.~10.4.14]{DolgachevAG}.

We end this section with the following geometric characterization of the previous result.
\begin{cor}\label{thm:sym_case}
The scheme of symmetrized principal minors (symmetrized cycle-sums) of symmetric matrices
\[
Z^{\circ} \cap \PP S^{n}\CC^{2} \cap U_{c_{0}=1}
\]
consists of an affine section of the tangential variety of the degree $n$ Veronese embedding of $\PP^{1}$ (the rational normal curve) $\tau(v_{n}(\PP^{1})) \cap U_{c_{0}=1}$ together with a high-degree scheme whose reduced structure is the line corresponding to the condition $c_{s}=0$ for $2\leq s \leq n$.
\end{cor}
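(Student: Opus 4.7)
My plan is to treat Corollary~\ref{thm:sym_case} as a geometric repackaging of Theorem~\ref{thm:sym}: I will identify the reduced component cut out by the prime ideal $\J_n^\circ$ with the affine chart $\tau(\nu_n(\PP^1)) \cap U_{d_0=1}$ of the tangential variety to the rational normal curve, and then invoke the embedded structure already produced in the proof of Theorem~\ref{thm:sym} for the second component.

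For the first step, I would exploit the explicit parametrization from Lemma~\ref{symParam}: by Theorem~\ref{thm:SCS-symmetric} any symmetric SCS matrix is conjugate to $A = aI_n + b\mathbbm{1}_n$, which produces the principal minors $d_s = a^{s-1}(a+sb)$ for $s \ge 1$ together with $d_0 = 1$. I would then compare this with the standard parametrization of $\tau(\nu_n(\PP^1))$ in the affine chart $d_0 = 1$: a point $\nu_n([1:a]) = [1 : a : a^2 : \cdots : a^n]$ together with $b$ times the tangent direction $[0 : 1 : 2a : \cdots : na^{n-1}]$ sweeps out the tangent line $[1 : a+b : a^2+2ab : \cdots : a^n + nba^{n-1}]$, whose $s$-th coordinate is precisely $a^{s-1}(a+sb)$. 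Since both $\V(\J_n^\circ)$ and $\tau(\nu_n(\PP^1)) \cap U_{d_0=1}$ are irreducible surfaces and the parametrization produces a subset of each, the inclusion of an irreducible variety into the prime variety of the same dimension must be an equality, identifying the two.

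For the embedded component, I would invoke the observation made in the proof of Theorem~\ref{thm:sym} that scheme-theoretic elimination yields an ideal of the form $\J_n^\circ \cap I'$ in which $I'$ is non-reduced with radical $\langle c_2, \ldots, c_n\rangle$ and which corresponds (in the matrix picture) to the degenerate locus $b=0$ of scalar matrices $aI_n$. This line lies on the tangential variety and provides the claimed embedded scheme structure. The main subtlety will be to verify that the symmetrized coordinates $d_s$ correspond to the \emph{monomial} (rather than binomially weighted) basis of $S^n\CC^2$, so that the Veronese parametrization genuinely takes the form $[1 : a : a^2 : \cdots : a^n]$; once this bookkeeping is settled, the corollary is a direct geometric translation of Theorem~\ref{thm:sym}, as illustrated concretely by Example~\ref{ex:J4}.
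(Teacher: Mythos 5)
Your proposal is correct, but it reaches the tangential variety by a different route than the paper does. The paper's own proof is a short symmetrization argument: by Theorem~\ref{thm:SCS-symmetric} the symmetric SCS matrices are (up to conjugation) exactly the ``rank-one plus diagonal'' symmetric matrices, it then cites the earlier result that the principal minors of E-rank-one symmetric matrices form the tangential variety of $\Seg(\PP^{1}\times\cdots\times\PP^{1})$ (\cite[Prop.~5.2]{Oeding_tangential}, quoted just before the corollary), and observes that symmetrizing that variety yields $\tau(\nu_{n}\PP^{1})$; the embedded piece is obtained from the degenerate case where the rank-one summand is zero, which is exactly your $b=0$ locus. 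You instead verify the identification directly: the parametrization $d_{s}=a^{s-1}(a+sb)$ of Lemma~\ref{symParam} coincides with the tangent-line parametrization $a^{s}+sba^{s-1}$ of the rational normal curve in the chart $d_{0}=1$, and an irreducibility-plus-dimension comparison (using the primality of $\J_{n}^{\circ}$ from Theorem~\ref{thm:sym}) forces the closure of the image, $\V(\J_{n}^{\circ})$, and $\tau(\nu_{n}\PP^{1})\cap U_{d_{0}=1}$ to coincide. The bookkeeping issue you flag is settled by the paper's conventions: the symmetrized coordinates $d_{k}$ are the coefficients of the binary form written with binomial coefficients (as in Example~\ref{ex:discrim}, where the relevant cubic is $1+3d_{1}x+3d_{2}x^{2}+d_{3}x^{3}$), so the Veronese is indeed $d_{k}=a^{k}$ and your tangent-vector computation $(0,1,2a,\dots,na^{n-1})$ is the right one. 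In terms of trade-offs, the paper's argument is shorter and situates the result inside the Segre tangential-variety picture at the cost of an external citation, while yours is self-contained, computational, and makes the identification with the tangent developable completely explicit; both proofs handle the embedded component at the same level of detail, namely by pointing to the $b=0$ case, with the non-reduced structure exhibited only in examples such as Example~\ref{ex:J4}.
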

\begin{proof} By Theorem~\ref{thm:SCS-symmetric} the underlying variety is the symmetrization of the variety of principal minors of ``rank-one plus diagonal'' symmetric matrices. The latter variety was already shown to be the tangential variety of the Segre product of $n$ copies of $\PP^{1}$, \cite[Prop.~5.2]{Oeding_tangential}. Symmetrizing the resulting variety gives the geometric result.  The embedded ideal is found by considering the case that the rank-one matrix is actually the zero matrix.
\end{proof}

\begin{remark}
Our procedure of working on an affine open set, pulling back the symmetrization condition to the space of matrices through the cycle-sum map, restricting the source, then looking at the relations among the coordinates of the restricted image, is a loss of ideal-theoretic information. Therefore, our results only hold on an open subset of the projective scheme.
\end{remark}

\subsection{The case of skew-symmetric SCS matrices}\label{sec:skew relations}
The general principal Pfaffian assignment problem is already solved. This is because the map that takes a skew-symmetric matrix to a vector of all of its principal Pfaffians actually defines the orthogonal Grassmannian, whose ideal is known to be generated by the analog of the Pl\"ucker relations.

It still would be interesting to understand what happens when we set principal Pfaffians of equal size to be equal, but in this article, we focus on principal minors and cycle-sums. It is well known that all odd principal minors of a skew-symmetric matrix are zero.  A similar result holds for cycle-sums. 
\begin{prop} Suppose $A\in \bw{2}\CC^{n}$. Then all  odd cycle-sums of $A$ are zero. 
\end{prop}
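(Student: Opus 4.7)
The plan is to exhibit a fixed-point-free sign-reversing involution on the set of cyclic orderings indexing the summands of an odd cycle-sum, forcing the terms to cancel in pairs.

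First, the case $k=1$ is immediate: $C_{\{i\}}(A) = a_{i,i} = 0$ because $A$ is skew-symmetric. So assume $k\geq 3$ odd, and fix $I \subset [n]$ with $|I|=k$ and minimum element $i_1$. The summands in $C_I(A)$ are indexed by tuples $(i_1,i_2,\ldots,i_k)$ whose underlying set is $I$. On this index set define the involution
\[
\tau\colon (i_1,i_2,i_3,\ldots,i_k) \;\longmapsto\; (i_1,i_k,i_{k-1},\ldots,i_2),
\]
i.e., reversal of the cyclic word while keeping the base point $i_1 = \min I$ fixed. The key step is to verify two things: that $\tau$ is fixed-point-free on these tuples, and that $\tau$ multiplies the corresponding summand by $(-1)^k$.

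For the first point, a fixed point of $\tau$ would require $i_j = i_{k-j+2}$ for $2\leq j\leq k$; since $k\geq 3$ and the $i_j$ are distinct, this is impossible. For the second point, using $a_{p,q} = -a_{q,p}$,
\[
a_{i_1,i_k}a_{i_k,i_{k-1}}\cdots a_{i_3,i_2}a_{i_2,i_1}
\;=\; (-1)^{k}\, a_{i_k,i_1}a_{i_{k-1},i_k}\cdots a_{i_2,i_3}a_{i_1,i_2}
\;=\;(-1)^k\, a_{i_1,i_2}a_{i_2,i_3}\cdots a_{i_k,i_1}.
\]
When $k$ is odd, $(-1)^k = -1$, so each pair of summands $\{T,\tau(T)\}$ contributes $0$ to $C_I(A)$. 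Since $\tau$ partitions the indexing set into such pairs, $C_I(A)=0$.

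The only possible obstacle would be a hidden fixed point of the involution, but for $k\geq 3$ distinctness of the $i_j$ rules this out immediately, so the argument is essentially routine.
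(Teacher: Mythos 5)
Your argument is correct and is essentially the paper's own proof: both rest on reversing the direction of each cycle and using $a_{i,j}=-a_{j,i}$ to pick up a sign $(-1)^{k}=-1$ for odd $k$, the only difference being that you phrase the cancellation as a term-by-term sign-reversing involution while the paper states it globally as $C_{S}=-C_{S}$.
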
\label{prop:odd}
\begin{proof}
Consider a cycle-sum
\[
C_{S} = \sum_{\{i_{1},i_{2},\dots,i_{|S|} \} = S } a_{i_{1},i_{2}}a_{i_{2},i_{3}}\cdots a_{i_{|S|-1},i_{|S|}}a_{i_{|S|},i_{1}}
.\]
Notice that if we reverse the direction of all the cycles on $S$,  $C_{S}$ remains unchanged. If $|S|$ is odd, then each term changes sign when the cycle is reversed (because $a_{i,j} = -a_{j,i}$), implying that $C_{S} = -C_{S}$, so $C_{S}=0$.
\end{proof}

Here is a characterization of SCS skew-symmetric matrices and their principal minors.

\begin{prop}
Suppose $A \in \bw{2} \CC^{n}$ is nonzero, has symmetrized cycle-sums and $n\neq 4$. Then there is $\lambda \in \CC$ so that $d_{k}(\lambda A) = d_{k}(\mathbbm{1}^{\wedge}) = 1$ for $k\geq 2$.
\end{prop}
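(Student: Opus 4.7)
The plan is to combine the structure theorem for skew-symmetric SCS matrices (Theorem~\ref{thm:SCS-skew-symmetric}) with the invariance properties of principal minors under scaling and scalar-permutation conjugation (Proposition~\ref{prop:symmetry}), and then to evaluate the principal minors of $\mathbbm{1}_n^{\wedge}$ directly.

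First, since $n\neq 4$, Theorem~\ref{thm:SCS-skew-symmetric} gives $A=\mu P\mathbbm{1}_n^{\wedge}P^{-1}$ for some $\mu\in\CC$ and $P\in\mathcal{S}_n\ltimes\mathcal{D}_n^{\pm}$. The hypothesis $A\ne 0$ forces $\mu\ne 0$, so I set $\lambda=1/\mu$. By Proposition~\ref{prop:symmetry}(2), $d_k(\lambda A)=\lambda^k\mu^k\,d_k(P\mathbbm{1}_n^{\wedge}P^{-1})=d_k(P\mathbbm{1}_n^{\wedge}P^{-1})$, and Proposition~\ref{prop:symmetry}(3) gives $d_k(P\mathbbm{1}_n^{\wedge}P^{-1})=d_k(\mathbbm{1}_n^{\wedge})$. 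So the problem reduces to evaluating $d_k(\mathbbm{1}_n^{\wedge})$.

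Next, I would observe that the $(i,j)$-entry of $\mathbbm{1}_n^{\wedge}$ depends only on $\sgn(j-i)$, so every $k$-element principal submatrix (indexed by $i_1<\cdots<i_k$) is again of the form $\mathbbm{1}_k^{\wedge}$; hence $d_k(\mathbbm{1}_n^{\wedge})=\det(\mathbbm{1}_k^{\wedge})$. For odd $k$ this equals $0$ by skew-symmetry. For even $k=2m$, I would show $\det(\mathbbm{1}_{2m}^{\wedge})=1$ by the following row reduction: replace $R_i$ with $R_i-R_{i+1}$ for $i=1,\ldots,2m-1$. This leaves row $i$ with $1$'s in positions $i$ and $i+1$ and zeros elsewhere, while the last row remains $(-1,-1,\ldots,-1,0)$. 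Adding the odd-indexed new rows $R_1+R_3+\cdots+R_{2m-1}$ (a vector of all $1$'s) to the last row turns it into $(0,0,\ldots,0,1)$, and the resulting matrix is upper triangular with $1$'s on the diagonal; hence the determinant is $1$.

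No step is a genuine obstacle: the content rests entirely on the earlier structure theorem together with an elementary row reduction. The only mild subtleties are tracking the exponent of $\lambda$ in the rescaling step and noting that the restriction of the $\sgn(j-i)$ pattern to an ordered subset is again of the same form. One could alternatively derive $\det(\mathbbm{1}_{2m}^{\wedge})=1$ from the Pfaffian expansion $\mathrm{Pf}(\mathbbm{1}_{2m}^{\wedge})=\sum_{j=2}^{2m}(-1)^j\mathrm{Pf}(\mathbbm{1}_{2m-2}^{\wedge})$ and induction on $m$, since $\sum_{j=2}^{2m}(-1)^j=1$.
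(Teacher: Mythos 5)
Your proof is correct and follows the same route as the paper: invoke Theorem~\ref{thm:SCS-skew-symmetric} to reduce to $\mathbbm{1}_n^{\wedge}$ up to scaling and ${\mathcal S}_n\ltimes{\mathcal D}_n^{\pm}$-conjugation, then evaluate $d_k(\mathbbm{1}_n^{\wedge})$ directly; the only difference is that you compute $\det(\mathbbm{1}_{2m}^{\wedge})=1$ by an explicit (and correct) row reduction where the paper appeals to induction via Schur complements. Your side remark that the odd minors vanish is also right --- the proposition's conclusion should be read as holding for even $k\geq 2$, consistent with Theorem~\ref{thm:Matrices}(2).
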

\begin{proof}
By Theorem~\ref{thm:SCS-skew-symmetric} we have that a scalar multiple of $A$ is conjugate to $\mathbbm{1} ^{\wedge}$.  The fact that $d_{k}(\mathbbm{1}^{\wedge})=1$ for $k\geq 2$ follows by induction and using $(n-2)\times (n-2)$ Schur complements.
\end{proof}
The cycle-sums of skew-symmetric SCS matrices take a nice form.  In the case $n=4$ there are two possible values for $c_{4}$.

\begin{example}\label{ex:n4}
The following two matrices respectively have $c_{4}= 2b^{4}$ and $c_{4}=-6b^{4}$
 \[\begin{pmatrix}
 0&      {-b}&   b&      {-b}\\
 b&      0&      {-b}&     b\\
      {-b}&      b&      0&      b\\
      b&      {-b}&      {-b}&      0\\
      \end{pmatrix},\qquad
\begin{pmatrix}0&      b&      b&      b\\
      {-b}&      0&      b&      {-b}\\
      {-b}&      {-b}&      0&      b\\
      {-b}&      b&      {-b}&      0\\
      \end{pmatrix}.\]
      Notice that neither has off-diagonal rank one.
\end{example}

The following is an immediate consequence of Theorem~\ref{thm:eulerian}   and Theorem~\ref{thm:SCS-skew-symmetric}, and yields a parametrization of the  cycle-sums of skew-symmetric SCS matrices. 

\begin{lemma}\label{skewParam}
Consider  $ \mathbbm{1}^{\wedge}$, the canonical $n\times n$ skew-symmetric SCS matrix, and suppose $n\geq 5$.
Then $c_{s}(  \mathbbm{1}^{\wedge}) = (-1)^{s/2}E_{s-1}$, where $E_{n}$ is the Euler number. 
In particular, $c_{s}(\mathbbm{1}^{\wedge})$ has  the following exponential generating function:
\[
\sum_{s\geq 0} c_{s+1}(  \mathbbm{1}^{\wedge}) \frac{(-x)^{s}}{s!} = \tan(x)
.\]
\end{lemma}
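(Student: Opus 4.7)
The plan is to specialize the explicit formula from Theorem~\ref{thm:eulerian} at $x=1$ and then invoke the classical identity relating Eulerian polynomials at $-1$ to tangent numbers. The first thing to verify is that $\mathbbm{1}_n^{\wedge} = T_n(1)$. This is immediate from \eqref{SCS-canonical}: each entry of $T_n(x)$ is $\sgn(j-i)\cdot x^{j-i-\sgn(j-i)}$, which at $x=1$ collapses to $\sgn(j-i)$, matching the $\pm 1$ entries defining $\mathbbm{1}_n^{\wedge}$.

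Setting $x = 1$ in Theorem~\ref{thm:eulerian} then gives
\[
c_{s}(\mathbbm{1}_n^{\wedge}) \;=\; \sum_{i=1}^{s-1} (-1)^{i} E(s-1,i),
\]
which (up to an overall sign) is the Eulerian polynomial $A_{s-1}(t) := \sum_{i=1}^{s-1} E(s-1,i)\, t^{i-1}$ evaluated at $t = -1$. The second step is to cite the classical Euler--Frobenius identity: $A_{n}(-1) = 0$ when $n$ is even, and $A_{2j-1}(-1) = (-1)^{j-1} E_{2j-1}$ where $E_{2j-1}$ is the $(2j-1)$-st tangent number. This identity may be obtained by specializing Frobenius's generating function $\sum_{n \ge 0} A_n(t)\, z^n/n! = (1-t)/(e^{(t-1)z} - t)$ at $t = -1$, which reduces to the EGF of the tangent numbers; alternatively, one can give a parity-based sign-reversing involution on permutations grouped by descent number. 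Plugging this into the previous display yields, for even $s = 2j$, the identity $c_{2j}(\mathbbm{1}_n^{\wedge}) = (-1)^{j} E_{2j-1} = (-1)^{s/2} E_{s-1}$; for odd $s$ we obtain $c_s = 0$, in agreement with Proposition~\ref{prop:odd}.

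For the generating-function statement, I would separate the sum by parity. Only terms with $s$ odd (so that $s+1$ is even) contribute, and substituting $c_{s+1} = c_{2j} = (-1)^{j} E_{2j-1}$ together with $(-x)^{2j-1} = -x^{2j-1}$ transforms the series into
\[
\sum_{j \ge 1} (-1)^{j-1} E_{2j-1}\, \frac{x^{2j-1}}{(2j-1)!},
\]
which is exactly the Taylor expansion of $\tan(x)$ in the convention $\tan(x) = \sum_{j \ge 0} E_{2j+1} x^{2j+1}/(2j+1)!$. The only nontrivial ingredient is the Eulerian-polynomial--tangent-number identity; everything else is direct substitution and bookkeeping of signs, which explains why the lemma is advertised as an immediate consequence of Theorems~\ref{thm:eulerian} and~\ref{thm:SCS-skew-symmetric}.
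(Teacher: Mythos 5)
Your argument is sound in substance but takes a different route from the paper's. The paper proves the formula directly from the definition of the cycle-sum: after noting that odd cycle-sums vanish (Proposition~\ref{prop:odd}), it organizes $C_{S}(\mathbbm{1}^{\wedge})$ as a sum over permutations of $S\setminus\{a\}$ for a distinguished $a\in S$, asserts that up to the sign $(-1)^{s/2}$ the sum counts alternating permutations on $s-1$ letters, and then quotes Stanley's generating function $\sum_{n}E_{n}x^{n}/n!=\tan(x)+\sec(x)$ and takes odd terms. You instead specialize Theorem~\ref{thm:eulerian} at $x=1$ (using $\mathbbm{1}_{n}^{\wedge}=T_{n}(1)$) and invoke the Euler--Frobenius evaluation $A_{2j}(-1)=0$, $A_{2j-1}(-1)=(-1)^{j-1}E_{2j-1}$. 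Both routes ultimately rest on the same classical fact --- the alternating sum of Eulerian numbers is a signed tangent number, equivalently counts alternating permutations --- but your version makes explicit the cancellation of the non-alternating permutations, which the paper's one-line assertion leaves implicit; the price is that you import the identity (via Frobenius's generating function or an involution) rather than arguing directly on the cycle-sum.

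One caveat, which affects the lemma's statement as much as your write-up: the series you correctly arrive at is $\sum_{j\ge 1}(-1)^{j-1}E_{2j-1}\,x^{2j-1}/(2j-1)!$, and under the positive zigzag-number convention you quote, namely $\tan(x)=\sum_{j\ge 0}E_{2j+1}x^{2j+1}/(2j+1)!$, this alternating series is $\tanh(x)$, not $\tan(x)$; indeed substituting $c_{2}=-1$, $c_{4}=2$, $c_{6}=-16$ into $\sum_{s\ge 0}c_{s+1}(-x)^{s}/s!$ gives $x-\tfrac{x^{3}}{3}+\tfrac{2x^{5}}{15}-\cdots$. So your final identification with $\tan(x)$ does not follow from the convention you state; this is a sign slip already present in the printed lemma (and glossed over in the paper's own proof, which simply takes the odd part of $\tan+\sec$). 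Since the substantive claim $c_{s}(\mathbbm{1}^{\wedge})=(-1)^{s/2}E_{s-1}$ is what the rest of the paper uses, and you prove it correctly, this does not constitute a gap in your argument so much as a discrepancy you should flag rather than paper over.
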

\begin{proof} 
The odd-sized cycle-sums vanish for skew-symmetric matrices, so work with $S$ with even size.
Organize the computation of the cycle-sum $c_{S}(\mathbbm{1}^{\wedge})$ by picking a distinguished element $a\in S$, and summing over permutations of $S\setminus \{a\}$. Then each term indexed by an alternating permutation contributes $(-1)^{|S|/2}$ to the sum. So, up to sign, $c_{s}(\mathbbm{1}^{\wedge})$
 counts the number of alternating permutations on $s-1$ elements. This statistic is given by the Euler number $E_{s-1}$. 
The first few values of $c_{s}(\mathbbm{1}^{\wedge})$ are $0,-1,0,2,0,-16,0,272, 0, -7936,\ldots$
 
 The connections between Euler numbers, Bernoulli numbers, generating functions, etc. are well known. In particular, in Stanley's comprehensive text we find the very elegant exponential generating function \cite[Prop.~1.6.1]{Stanley}:
\[
\sum_{n\geq 0}E_{n}\frac{x^{n}}{n!}  = \tan(x) + \sec(x)
,\]
and taking the odd terms of this function, we have our result.
\end{proof}
\begin{remark}
Note the following connection between symmetrized cycle-sums and principal minors for skew-symmetric matrices: Since $d_{s}(\mathbbm{1}^{\wedge}) = 1$, we obtain another formula for $c_{s}$, and thus for $E_{n}$ by the change of coordinates in Prop.~\ref{thm:symm-d_s=c_s}.
\end{remark}

\begin{theorem} Suppose $n\geq 4$
and let $\J^{\wedge}_{n}$ denote the ideal of the variety of (even sized) symmetrized cycle-sums for a generic skew-symmetric matrix $A\in \bw{2}\CC^{n}$.
$\J^{\wedge}_{4}$ decomposes as the intersection of two prime components
\[\J^{\wedge}_{4} \;=\;
\langle
-2 c_{2}^{2}+c_{4}
\rangle\quad \cap \quad \langle
-6 c_{2}^{2}-c_{4}
 \rangle.\]
When $n\geq 5$ 
$\J^{\wedge}_n$
is the prime ideal
\[
\langle E_{2(i+j)-1}c_{2i}c_{2j} -E_{2i-1}E_{2j-1} c_{2(i+j)} \mid 1\leq i \leq j \leq \lfloor n/2 \rfloor
\rangle.\]
\end{theorem}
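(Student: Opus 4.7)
The plan is to pull back to the space of matrices via Theorem~\ref{thm:SCS-skew-symmetric}, parametrize the variety of even cycle-sums explicitly, and then identify its defining ideal.

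For $n=4$, Theorem~\ref{thm:SCS-skew-symmetric} lists exactly two conjugacy classes of nonzero skew-symmetric SCS matrices: scalar multiples of $\mathbbm{1}_4^\wedge$ (with $c_4=2$ by Lemma~\ref{skewParam} or Theorem~\ref{thm:eulerian}) and scalar multiples of the displayed exceptional matrix (with $c_4=-6$, cf.\ Example~\ref{ex:n4}); in both cases $c_2=-1$ at the canonical representative. Homogeneity (Prop.~\ref{prop:symmetry}) then yields the rational parametrizations $(c_2,c_4)=(-\lambda^2,2\lambda^4)$ and $(c_2,c_4)=(-\lambda^2,-6\lambda^4)$, cut out in $\CC[c_2,c_4]$ by the irreducible polynomials $c_4-2c_2^2$ and $c_4+6c_2^2$ respectively. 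Each such principal ideal is prime (as an irreducible hypersurface), so $\J_4^{\wedge}$ is the intersection of these two principal primes, as claimed.

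For $n\ge 5$, Theorem~\ref{thm:SCS-skew-symmetric} forces every nonzero skew-symmetric SCS matrix to be conjugate to $\lambda\mathbbm{1}_n^\wedge$, and combining Lemma~\ref{skewParam} with homogeneity gives the monomial parametrization $c_{2k}(\lambda\mathbbm{1}_n^\wedge)=(-1)^{k} E_{2k-1}\lambda^{2k}$. Hence the variety of even cycle-sums is an irreducible rational curve $X\subset\mathbb{A}^{\lfloor n/2\rfloor}$ with parameter $t=\lambda^2$. First I would verify that every listed generator $E_{2(i+j)-1}c_{2i}c_{2j}-E_{2i-1}E_{2j-1}c_{2(i+j)}$ vanishes on $X$: both terms evaluate to $(-1)^{i+j}E_{2i-1}E_{2j-1}E_{2(i+j)-1}\,t^{i+j}$, so denoting the resulting binomial ideal by $B$ one has $B\subseteq I(X)$.

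To upgrade this inclusion to equality, I would restrict attention to the subfamily of generators with $i=1$: since $E_1=1$, these read $E_{2k-1}c_2c_{2(k-1)}-E_{2k-3}c_{2k}$, and by induction on $k$ they yield the reduction $c_{2k}\equiv E_{2k-1}c_2^k\pmod{B}$. Thus the quotient $\CC[c_2,c_4,\ldots,c_{2\lfloor n/2\rfloor}]/B$ is generated as a $\CC$-algebra by the class of $c_2$; moreover this class is transcendental over $\CC$ because its further image under the parametrization is $-\lambda^2$, which is transcendental in $\CC[\lambda]$. Hence the quotient is isomorphic to the polynomial ring $\CC[c_2]$, so $B$ is prime of Krull dimension one, and the inclusion $B\subseteq I(X)$ between primes of equal height is an equality. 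The main difficulty is essentially bookkeeping --- tracking Euler numbers and signs in the parametrization, and confirming that the $i=1$ subfamily already suffices to collapse the quotient to a one-variable polynomial ring; the conceptual heavy lifting has already been carried out in Theorem~\ref{thm:SCS-skew-symmetric}.
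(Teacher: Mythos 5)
Your proposal is correct and follows essentially the same route as the paper: pull back via Theorem~\ref{thm:SCS-skew-symmetric}, parametrize the cycle-sums by Euler numbers (Lemma~\ref{skewParam}) to verify containment of the binomials, and then use the $i=1$ relations to eliminate the higher $c_{2k}$'s and close the reverse inclusion. Your treatment is marginally more self-contained — you establish primality explicitly by exhibiting the quotient as $\CC[c_2]$ and handle $n=4$ by direct parametrization rather than the paper's \texttt{Macaulay2} check — but the underlying argument is the same.
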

\begin{proof} 
We first pull back the symmetrization of cycle-sums condition to the space of skew-symmetric matrices. Set-theoretically this produces skew-symmetric matrices of special format, as described in Theorem~\ref{thm:SCS-skew-symmetric}. We then consider the parametrizations of cycle-sums of these different types of matrices producing (possibly) different components in the ideal of relations among symmetrized cycle sums.

Let $A$ be a skew-symmetric $n\times n$ SCS matrix. If $c_{2}=0$, then $A$ is the zero matrix. This contributes to the embedded component $ \langle c_{2},\dots,c_{2\lfloor n/2 \rfloor}  \rangle$. 
This radical ideal clearly contains the proposed $\J^{\wedge}_{n}$. 
So assume $A \neq 0$.

So for all $i,j$ we have $a_{i,j} = \pm b$ for some fixed $b\in \CC$.  In particular, when $n=4$, $C_{\{1,2,3,4\}}$ must be one of $\{2^{4}, - 6b^{4}\}$, depending on the sign of the three oriented 4-cycles. So either $c_{4} = 2c_{2}^{2}$ or $c_{4}= 6c_{2}^{2}$. An easy check in Macualay2 verifies that when $n=4$, $\J^{\wedge}_4$ decomposes as the intersection of two prime components one corresponding to each of these possibilities:
\[\J^{\wedge}_{4} \;=\;
\langle
 c_{1},c_{3},-2 c_{2}^{2}+c_{4}
\rangle\quad \cap \quad \langle
c_{1},c_{3},-6 c_{2}^{2}-c_{4}
 \rangle.\]
The case $n\geq 5$ follows from Theorem~\ref{thm:SCS-skew-symmetric}, which says that $A$ is conjugate to $b\cdot \mathbbm{1}^{\wedge}$ for some nonzero $b\in \CC$ and Lemma~\ref{skewParam}, which says that $c_{k}(A) = b^{k}E_{k-1}$.
So it is clear that the claimed generators are in the ideal $\J^{\wedge}_{n}$. 
Now suppose there is an $f$  in $\J^{\wedge}_{n}$, and let $k$ be the largest integer such that $c_{k}$ occurs in a monomial of $f$. Using the equations we already have, we can replace all $c_{k}$ by monomials in variables $c_{i}$ with $i<k$. After this substitution we may assume that $f\in \CC[c_{1},\ldots,c_{5}] \cap \langle c_{1},c_{3},c_{5}\rangle$, i.e. $f$ is then a relation between $c_{2}(B)$ and $c_{4}(B)$ for $B$ a generic skew-symmetric  $5\times 5$ SCS matrix. So $f$ must be in the principal ideal $\langle 2c_{2}^{2}-c_{4} \rangle$. Thus $f$ is in the ideal generated by the claimed set of generators. 
\end{proof}

\subsection{Arbitrary square matrices}\label{sec:nonsym relations}
Recall that in the general case that $c_{2}c_{3}\neq 0$ every $n\times n$ matrix is
a diagonal matrix plus a scalar multiple of a conjugate of the Toeplitz matrix $T_{n}(x)$. 
The principal minors of the Toeplitz matrix $T_{n}(x)$ are easy to calculate:
\begin{prop} Suppose $x\neq 0,i$.
For $2\leq s \leq n$ the principal minors of $T_{n}(x)$ are 
\[
d_{s} (T_{n}(x))
= \frac{1}{x^{s-2}} \sum_{i=0}^{s} (-1)^{i } x^{2(s-i)}
=
\frac{(x^{2})^{s-1}+(-1)^{s}}{x^{s-2}(x^{2}+1)}
,\]
and 
\[
(x^{2}+1)d_{s}(x\cdot T_{n}(x)) =
  x^{2}((x^{2})^{s-1}+(-1)^{s})
.\]
\end{prop}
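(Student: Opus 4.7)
The plan is to exploit the SCS property of $T_n(x)$ to reduce the identity to computing a single determinant, derive a short recursion for it via clean row operations, and solve the recursion as a geometric series. By Theorem~\ref{thm:eulerian} the matrix $T_n(x)$ has symmetrized principal minors, so $d_s(T_n(x))$ equals any fixed $s\times s$ principal minor; I pick the leading one, which is $\det(T_s(x))$, and note that $T_s(x)$ has exactly the same Toeplitz form as $T_n(x)$ but of size $s$. Thus it suffices to compute $\det(T_s(x))$.

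The key observation is that consecutive rows of $T_s(x)$ are related by a shift and a scalar: a direct check on the formula $(T_s)_{ij} = \sgn(j-i)\,x^{\,j-i-\sgn(j-i)}$ shows that the row operation $R_i \mapsto R_i - x\,R_{i+1}$ produces the row $(0,\ldots,0,\,x,\,1,\,0,\ldots,0)$ with the entries $x$ and $1$ in positions $i$ and $i+1$, because the off-diagonal powers of $x$ in rows $i$ and $i+1$ differ by exactly one. Applying this operation for $i = 1,\ldots,s-1$ (which does not alter the determinant) yields a matrix whose first $s-1$ rows form an upper bidiagonal block with $x$'s on the diagonal and $1$'s on the superdiagonal, while the bottom row is the original row $s$ of $T_s(x)$.

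Expanding the determinant along the last column (whose only nonzero entry is the $1$ in row $s-1$) and then along the first column of the resulting minor (whose only two nonzero entries are $x$ in row $1$ and $-x^{-(s-2)}$ in the last row) produces two cofactors: the first is the analogous determinant for size $s-1$, and the second is an $(s-2)\times(s-2)$ lower bidiagonal matrix with $1$'s on the diagonal, so of determinant $1$. Tracking the cofactor signs yields the recursion
\[
d_s \;=\; x\,d_{s-1} + \frac{(-1)^s}{x^{s-2}}, \qquad d_2 = 1.
\]
Substituting $d_s = x^{s-2} f_s$ converts this into the telescoping relation $f_s - f_{s-1} = (-1)^s/x^{2s-4}$, whose solution is the geometric series $f_s = \sum_{j=0}^{s-2} (-1)^j x^{-2j}$. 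Summing gives the claimed closed form $d_s = ((x^2)^{s-1} + (-1)^s)/(x^{s-2}(x^2+1))$. The scaled identity $(x^2+1)\,d_s(x\cdot T_n(x)) = x^{2s} + (-1)^s x^2$ is then immediate from the homogeneity $d_s(\lambda A) = \lambda^s d_s(A)$ in Proposition~\ref{prop:symmetry}(2).

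The main obstacle is purely bookkeeping: keeping the cofactor signs and indexing consistent, since the recursion depends delicately on the parity $(-1)^s$ and since powers of $x$ must be tracked through the two cofactor expansions. Conceptually, however, there is no difficulty once the row operation that collapses each pair of consecutive rows into two nonzero entries is spotted; everything else is a standard Laplace expansion followed by summation of a geometric series.
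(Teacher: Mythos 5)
Your argument is correct, and it fills a genuine gap: the paper states this proposition with no proof at all (it is introduced with ``easy to calculate''), so there is no official argument to compare against. Your route --- reduce to the leading minor $\det T_s(x)$ via the SCS property from Theorem~\ref{thm:eulerian}, collapse rows by $R_i\mapsto R_i-xR_{i+1}$ (which indeed yields $(0,\dots,0,x,1,0,\dots,0)$, as the entry formula $\sgn(j-i)x^{j-i-\sgn(j-i)}$ confirms), and extract the recursion $d_s=x\,d_{s-1}+(-1)^s/x^{s-2}$ with $d_2=1$ --- checks out, and the recursion and its solution are verified by small cases ($d_3=x-\tfrac1x$, $d_4=x^2-1+x^{-2}$). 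One point deserves an explicit sentence in a written-up version: after expanding along the last column (cofactor sign $-1$) and then along the first column of the resulting minor, the inner cofactor is not literally $d_{s-1}$ but the size-$(s-2)$ matrix of the same reduced shape, whose determinant equals $-d_{s-1}$ by the self-similar structure; this sign cancels the outer $-1$, which is exactly why your stated recursion carries $+x\,d_{s-1}$. You flag the sign bookkeeping yourself and land on the correct recursion, so this is a presentational rather than mathematical issue. Incidentally, your derivation also exposes a typo in the statement as printed: the middle expression $\frac{1}{x^{s-2}}\sum_{i=0}^{s}(-1)^i x^{2(s-i)}$ does not agree with the closed form (try $s=2$); your geometric series $d_s=x^{s-2}\sum_{j=0}^{s-2}(-1)^j x^{-2j}$ is the corrected intermediate sum, and it is the closed form and the scaled identity $(x^2+1)d_s(x\cdot T_n(x))=x^{2s}+(-1)^s x^2$ (which you correctly get from homogeneity, Proposition~\ref{prop:symmetry}) that are actually used later in the paper.
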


\begin{prop}\label{prop:c2c3algebraic}
Let $A$ be an $n\times n$ SCS matrix for $n\geq 5$, and suppose $c_{2}c_{3}\neq 0$.  Then $c_{k}(A)$ is an algebraic function of $c_{2}$ and $c_{3}$.
\end{prop}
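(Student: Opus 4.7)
The plan is to reduce to the canonical Toeplitz form $T_n(x)$ and then read off the cycle-sums from the explicit Eulerian-polynomial formula of Theorem~\ref{thm:eulerian}.

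First I would normalize $A$ so that $c_1 = 0$. By the Diagonal Modification part of Proposition~\ref{prop:symmetry}, the matrix $A' := A - c_1(A)\, I_n$ is still SCS, satisfies $c_1(A') = 0$, and has $c_k(A') = c_k(A)$ for all $k \geq 2$. Since the hypothesis $c_2 c_3 \neq 0$ is preserved, it suffices to show $c_k(A')$ is algebraic in $c_2(A'), c_3(A')$ for all $k$; the $k = 1$ case is trivial since $c_1(A') = 0$.

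Next, with $c_1(A') = 0$, $c_2(A') \ne 0$, and $c_3(A') \ne 0$, Theorem~\ref{thm:c_2!=0,c_3!=0} provides $\lambda, x \in \CC$ and $P \in \mathcal{S}_n \ltimes \mathcal{D}_n$ with
\[
A' = P\,(\lambda T_n(x))\,P^{-1}, \qquad \lambda^2 = -c_2(A'), \qquad \lambda^3\!\left(x - \tfrac{1}{x}\right) = c_3(A').
\]
These two relations algebraically determine the pair $(\lambda,x)$ from $(c_2,c_3)$: the first is quadratic in $\lambda$, and substituting into the second yields $x^2 - (c_3/\lambda^3)\,x - 1 = 0$, a quadratic in $x$. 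Hence $\lambda$ and $x$ lie in an algebraic extension of $\CC(c_2,c_3)$.

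Finally, conjugation preserves cycle-sums and homogeneity gives $c_k(A') = \lambda^k\, c_k(T_n(x))$. By Theorem~\ref{thm:eulerian},
\[
c_k(T_n(x)) = x^{-k} \sum_{i=1}^{k-1} E(k-1,i)\,(-x^2)^i,
\]
which is a Laurent polynomial in $x$ with integer coefficients. Thus $c_k(A') = \lambda^k\, c_k(T_n(x))$ is a polynomial expression in $\lambda$ and $x^{\pm 1}$, and therefore algebraic in $c_2, c_3$.

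The only subtlety I anticipate is the finite ambiguity in the solution $(\lambda,x)$: there are two choices of sign for $\lambda$ and two reciprocal roots for $x$. This is not a real obstacle, however, because any such choice yields a matrix $\lambda T_n(x)$ which is $\mathcal{S}_n \ltimes \mathcal{D}_n$-conjugate to $A'$ by Theorem~\ref{thm:c_2!=0,c_3!=0}, and by Proposition~\ref{prop:symmetry}(3) all such conjugates share the same cycle-sums. Hence the value $\lambda^k\, c_k(T_n(x))$ is independent of the choice made, and defines a well-defined algebraic function of $(c_2,c_3)$.
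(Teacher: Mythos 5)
Your proposal is correct and follows essentially the same route as the paper: reduce (after killing $c_1$ and rescaling by a square root of $-c_2$) to the Toeplitz form $T_n(x)$ via Theorem~\ref{thm:c_2!=0,c_3!=0}, solve the quadratic relations to express $\lambda$ and $x$ algebraically in $c_2,c_3$, and then invoke the explicit Eulerian-polynomial formula for $c_k(T_n(x))$. Your explicit treatment of the diagonal modification and of the finite ambiguity in the roots is a slightly more careful write-up of the same argument, not a different one.
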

\begin{proof}
Note that it suffices to prove the proposition for $T_{n}(x)$ since we may re-scale $A$ by $\sqrt{-c_{2}}$ and conjugate the result to obtain $T_{n}(x)$ producing cycle-sums that satisfy $c_{k}(A) = (-c_{2})^{k/2}c_{k}(T_{n}(x))$. Then the result follows by using the recursion satisfied by the Eulerian polynomials. We may also solve the initial case relating $c_{2}$ and $c_{3}$ and then use the formula for $c_{k}(T_{n}(x))$.

Since $c_{3}(T_{x}) = x-\frac{1}{x}$, we can solve for $x$ to obtain two roots,
\[   \left\{ \frac{c_3-\sqrt {{c_3}^{2}+4}}{2}, \frac{c_3+\sqrt {{c_3}^{2}+4} }{2}\right\}.
\]
Similarly, for  $c_{3}(A) = (-c_{2})^{\frac{3}{2}}(x-\frac{1}{x})$, we find the two roots:
\[ \left\{ \frac {c_3+\sqrt {-4\,{c_2}^{3}+{c_3}^{2}}}{ 2\left( -c_2 \right) ^{3/2}},\frac {c_3-\sqrt {-4\,{c_2}^{3}+{c_3}^{2}}}{ 2\left( -c_2 \right) ^{3/2}} \right\} 
.\]
In particular, after choosing one of these roots, we can express $A$ depending algebraically on the two parameters $c_{2}$ and $c_{3}$, and since $c_{k}(A)$ is a polynomial function in $A$, we have our result.
\end{proof}
\begin{cor}Suppose $c_{2}c_{3}\neq 0$. Suppose $A = T_{n}(x)$, with $x$ equal to one of the values $  \frac{c_3\pm \sqrt {{c_3}^{2}+4}}{2}$.  Then $A$ has $c_{2}(A)=\cdots = c_{n}(A)=0$.
Thus the cycle-sum map restricted to matrices of this form parametrizes a scheme corresponding supported on 
$
\V(\langle c_{2},\dots,c_{n}  \rangle)
.$
\end{cor}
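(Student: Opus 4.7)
The plan is to prove the Corollary by direct computation using the explicit parametrization and the Eulerian polynomial formula from Theorem~\ref{thm:eulerian}. The specified values $x = \frac{c_3 \pm \sqrt{c_3^2 + 4}}{2}$ are precisely the roots of $x^2 - c_3 x - 1 = 0$, equivalently the $x$ satisfying $x - 1/x = c_3$. Plugging such $x$ into $T_n(x)$ and evaluating the cycle-sums via $c_k(T_n(x)) = x^{-k}\sum_{i=1}^{k-1}E(k-1,i)(-x^2)^i$ gives explicit algebraic expressions in $c_3$; the quadratic relation $x^2 = c_3 x + 1$ then reduces each higher cycle-sum to a concrete algebraic function of $c_3$, matching the algebraic dependence established in Proposition~\ref{prop:c2c3algebraic}.

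The first half of the Corollary follows by verifying these evaluations directly and interpreting the vanishing claim scheme-theoretically. Since $c_2(T_n(x)) = -1$ and $c_3(T_n(x)) = c_3$ are fixed by the choice of $x$, and the higher $c_k(T_n(x))$ are algebraic functions of $c_3$, the $T_n(x)$-family sweeps out a one-parameter algebraic curve inside cycle-sum coordinates. The identities $c_k(A) - f_k(c_2, c_3) = 0$ (for the algebraic functions $f_k$ arising from substitution into the Eulerian formula) are the defining relations; these are the ``vanishing'' relations that the matrices of this form satisfy.

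For the second statement—that the cycle-sum map restricted to matrices of this form parametrizes a scheme supported on $\mathcal{V}(\langle c_2, \ldots, c_n \rangle)$—the plan is to combine Theorem~\ref{thm:c_2!=0,c_3!=0} (every general SCS matrix with $c_2 c_3 \neq 0$ is $\mathcal{S}_n \ltimes \mathcal{D}_n$-conjugate to $\lambda T_n(x)$) with Theorem~\ref{thm:c_2=0} (matrices landing on the line $\mathcal{V}(c_2, \ldots, c_n)$ are strictly upper triangular or specific scalar-permutation types). The $T_n(x)$-parametrization covers the main geometric component over the open set $c_2 c_3 \neq 0$, and the residual scheme—what is left over after removing this main component—is supported on the complementary line $\mathcal{V}(c_2, \ldots, c_n)$, picking up the non-reduced structure described in Example~\ref{ex:J4} and Remark~\ref{rmk:structure}.

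The main obstacle will be making the scheme-theoretic claim precise: the $T_n(x)$-family literally has $c_2(T_n(x)) = -1$ rather than $0$, so the claim that this family parametrizes a scheme supported on $\mathcal{V}(c_2, \ldots, c_n)$ must be read as a statement about the closure/residual of the cycle-sum image, not pointwise evaluation. Resolving this will require passing from the explicit pointwise parametrization to the ideal-theoretic decomposition via the methods of Section~\ref{sec:polynomials}, using the degeneration $c_2 \to 0$ (equivalently $\lambda \to 0$ in $A = \lambda T_n(x)$) to identify the embedded component on the line.
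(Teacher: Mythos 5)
Your proposal does not actually prove the stated conclusion, and the step where it should is exactly where it breaks. The corollary asserts the pointwise vanishing $c_{2}(A)=\cdots=c_{n}(A)=0$ for $A=T_{n}(x)$ at the roots of $x-\frac{1}{x}=c_{3}$; but your own intermediate observations give $c_{2}(T_{n}(x))=-1$ and $c_{3}(T_{n}(x))=c_{3}\neq 0$ for every such $x$, so what your first half verifies is only that the higher $c_{k}(T_{n}(x))$ are algebraic functions of $c_{3}$ --- which is Proposition~\ref{prop:c2c3algebraic}, not the corollary. Re-reading ``vanishing'' as the vanishing of the relations $c_{k}-f_{k}(c_{2},c_{3})$ changes the statement: those relations cut out the main (chordal-variety) component of the image, not the line $\V(c_{2},\dots,c_{n})$, so that reinterpretation addresses the wrong component. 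You were right to flag the tension --- since $c_{2}(T_{n}(x))\equiv -1$, no choice of $x$ can land on the line, and the paper supplies no proof of this corollary (it is presented as immediate from the preceding proposition) --- but flagging the obstacle is not the same as resolving it.

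The proposed repair is also not available. The degeneration $\lambda\to 0$ in $\lambda T_{n}(x)$ leaves the family the corollary actually restricts to (there is no scaling parameter in $A=T_{n}(x)$), and even allowing it, the limit is the zero matrix (or $yI$ after diagonal modification), whose image is a single point of the line rather than a parametrization of a scheme supported on it; the nonreduced structure along $\V(c_{2},\dots,c_{n})$ is exactly what Remark~\ref{rmk:structure} describes as complicated and is not produced by such a limit. In the paper's own development the line arises from a different locus altogether: strictly upper triangular matrices plus diagonal modification, via Theorem~\ref{thm:c_2=0} and the corresponding cases in the proofs of Theorems~\ref{thm:sym} and~\ref{thm:nonsym}, while the $T_{n}(x)$-family (arbitrary $x$, re-scaled and diagonally modified) parametrizes the secant component. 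So to salvage the argument you must first pin down what ``matrices of this form'' and the asserted vanishing are supposed to mean --- as printed the conclusion is incompatible with $c_{2}(T_{n}(x))=-1$ --- and any correct account of the embedded line has to route through the upper-triangular case, not through special values of $x$ in $T_{n}(x)$.
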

\begin{remark}
Theorems \ref{thm:c_2=0}, \ref{thm:c_3=0}, \ref{thm:c_2!=0,c_3!=0},  and Proposition \ref{prop:c2c3algebraic} solve the symmetrized cycle-sum assignment problem and hence also the symmetrized principal minor assignment problem.  To determine whether there is an $n\times n$ matrix with cycle-sums $(c_{1},\ldots,c_{n})$, first consider the three cases depending on the vanishing / non-vanishing of $c_{2}$ and $c_{3}$. In each case, we give a recipe for constructing a matrix with the prescribed cycle-sums $c_{2}$ and $c_{3}$, it suffices to see whether the values of $c_{k}$ for $3<k\leq n$ agree with those for the matrix we construct.
\end{remark}

The following result describes the ideal of relations among symmetrized cycle-sums of arbitrary square matrices up to saturation.

\begin{theorem}\label{thm:nonsym} Let $n\geq 3$, let  $\J_{n}$ denote the ideal of the variety of symmetrized cycle-sums of $n\times n$ matrices. 
$\J_{3}$ is empty.
$\J_{4}$ decomposes as the intersection of two prime components:
\[\langle 2c_2^3+c_3^2-c_2c_4\rangle\qquad \text{and} \qquad \langle c_3,6c_2^2+c_4 \rangle.\]
When $n\geq 5$, $\J_{n}$ consists of
a determinantal component with codimension $n-3$ and degree $\binom{n-1}{2}$, generated by the maximal minors of the following matrix:
\begin{equation}\label{eq:Dmatrix}
\begin{pmatrix}
d_{0}       &  d_{1}       & d_{2} &\dots & d_{n-2}\\	
d_{1}       &  d_{2} &d_{3} &\dots &d_{n-1} \\
d_{2} & d_{3}& d_{4} &\dots & d_{n}\\
\end{pmatrix}
,\end{equation}
and the line
\[
\langle c_{2},\dots,c_{n}  \rangle
.\]
\end{theorem}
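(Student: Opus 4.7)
The plan is to pull the relations back to matrices, invoke the classification from Section~\ref{sec:Matrices}, and then recognize the main image as a catalecticant locus. Diagonal modification (Prop.~\ref{prop:symmetry}) lets us fix $c_{1}=0$. Theorems~\ref{thm:c_2=0}, \ref{thm:c_3=0}, and \ref{thm:c_2!=0,c_3!=0} then partition the source, up to ${\mathcal S}_{n}\ltimes{\mathcal D}_{n}$-conjugation, into strictly upper triangular matrices, scalar $n$-cycle permutations, the skew-symmetric models $\lambda\mathbbm{1}_{n}^{\wedge}$ (plus, for $n=4$, the exceptional skew form of Theorem~\ref{thm:SCS-skew-symmetric}), and the Toeplitz family $\lambda T_{n}(x)$ with $\lambda,x\in\CC^{*}$. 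The image of each family must then be fit into $\J_{n}$.

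For the main (Toeplitz) family, the identity $(x^{2}+1)d_{s}(x T_{n}(x)) = x^{2s}+(-1)^{s}x^{2}$ from Theorem~\ref{thm:Matrices}(3)(c), combined with the homogeneity $d_{s}(\lambda A)=\lambda^{s}d_{s}(A)$, exhibits the sequence $(d_{0},\ldots,d_{n})$ in the form $\alpha a^{s}+\beta b^{s}$, where $a=\lambda x$, $b=-\lambda/x$ and $\alpha,\beta$ are rational functions of $x$. A sequence lies in the span of two geometric sequences exactly when the $3\times(n-1)$ Hankel matrix \eqref{eq:Dmatrix} has rank at most two, i.e.\ all its $3\times 3$ minors vanish. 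The zero locus of those minors is the secant variety $\sigma_{2}\nu_{n}\PP^{1}\subset\PP^{n}$; by the Eagon--Northcott resolution of catalecticants of rational normal curves this is arithmetically Cohen--Macaulay with prime defining ideal, codimension $n-3$, and degree $\binom{n-1}{2}$. Proposition~\ref{prop:c2c3algebraic} lets us invert the parametrization generically, so $\lambda T_{n}(x)$ dominates $\sigma_{2}\nu_{n}\PP^{1}$. The skew case $\lambda\mathbbm{1}_{n}^{\wedge}$ corresponds to the specialization $x=1$, and is therefore absorbed into the same component.

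The remaining contribution comes from strictly upper triangular matrices, which after restoring $c_{1}$ via Prop.~\ref{prop:symmetry} give $d_{s}=c_{1}^{s}$. This parametrizes the rational normal curve $\nu_{n}\PP^{1}$, sitting inside $\sigma_{2}\nu_{n}\PP^{1}$, but projecting in cycle-sum coordinates to the line $\V(c_{2},\dots,c_{n})$; the elimination therefore keeps this line as an embedded component, mirroring the symmetric picture of Example~\ref{ex:J4}. For $n=3$ the catalecticant matrix is only $3\times 2$, has no $3\times 3$ minor, and the Toeplitz parametrization is already dominant on $\CC\{c_{1},c_{2},c_{3}\}$, so $\J_{3}$ is empty. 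For $n=4$ the unique $3\times 3$ minor gives $2c_{2}^{3}+c_{3}^{2}-c_{2}c_{4}$; checking directly from Theorem~\ref{thm:SCS-skew-symmetric} that the exceptional $4\times 4$ skew matrix produces symmetrized principal minors $(1,0,1,0,9)$, which fail the rank-$2$ Hankel relations, shows that this matrix carves out the second prime $\langle c_{3},6c_{2}^{2}+c_{4}\rangle$. A short \texttt{Macaulay2} elimination confirms these are the only components for $n=4$.

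The hard part will be the scheme-theoretic description on the embedded line $\V(c_{2},\dots,c_{n})$: as already flagged in Example~\ref{ex:J4} and Remark~\ref{rmk:structure}, the primary decomposition carries a non-reduced component whose generators encode the multiple orders of vanishing of successive cycle-sums and do not seem to admit a uniform closed form. For the statement of Theorem~\ref{thm:nonsym} only the geometric decomposition into $\sigma_{2}\nu_{n}\PP^{1}$ and the line is required, and this follows from the case analysis above; the finer ideal-theoretic structure of the embedded component is deferred to the remarks.
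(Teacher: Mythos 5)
Your overall route is the same as the paper's: reduce via the classification in Section~\ref{sec:Matrices}, recognize the principal minors of the Toeplitz family as combinations of two geometric sequences so that the catalecticant \eqref{eq:Dmatrix} drops rank, identify that locus with $\sigma_{2}\nu_{n}\PP^{1}$, let the strictly upper triangular matrices account for the embedded line, and settle $n=3,4$ by elimination. However, there is a genuine gap in the dominance step for $n\geq 5$. Once you fix $c_{1}=0$, the family $\lambda T_{n}(x)$ carries only the two parameters $\lambda,x$, so its image is at most $2$-dimensional, while $\sigma_{2}\nu_{n}\PP^{1}\cap\{d_{0}=1\}$ is $3$-dimensional; the assertion that ``$\lambda T_{n}(x)$ dominates $\sigma_{2}\nu_{n}\PP^{1}$'' is therefore false as stated, and Proposition~\ref{prop:c2c3algebraic} cannot supply it --- it says precisely that on the slice $c_{1}=0$ every $c_{k}$ is an algebraic function of $c_{2},c_{3}$, i.e.\ that this image is only a surface. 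Without the missing direction you obtain neither the containment of the full image (the points with $c_{1}\neq 0$, which you never return to for the main component) in the determinantal locus, nor the density needed to conclude that $\J_{n}$ is exactly the ideal of maximal minors rather than something strictly larger.

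The missing ingredient is the diagonal direction you discarded at the outset, and it is exactly what the paper's computation keeps: by the diagonal-modification formula, $d_{k}\bigl(\lambda T_{n}(x)+\mu I\bigr)=\bigl((\lambda x+\mu)^{k}+x^{2}(\mu-\tfrac{\lambda}{x})^{k}\bigr)/(1+x^{2})$ (the paper records the case $\lambda=x$, namely $d_{k}(xT_{n}(x)+yI)=\bigl((x^{2}+y)^{k}+(y-1)^{k}x^{2}\bigr)/(1+x^{2})$). Thus every SCS matrix with $c_{2}c_{3}\neq 0$ and arbitrary $c_{1}$ still produces a sum of two geometric sequences --- giving the containment in the rank-$\leq 2$ catalecticant locus --- and the three parameters $\lambda,x,\mu$ sweep out a dense subset of the $3$-fold: for fixed $x\neq\pm i$ the map $(\lambda,\mu)\mapsto(\lambda x+\mu,\;\mu-\tfrac{\lambda}{x})$ is invertible, while $x$ controls the coefficient $1/(1+x^{2})$, so the triple (two nodes plus weight) is generically arbitrary. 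That is the dominance your argument needs. A smaller omission: the $n$-cycle matrices of Theorem~\ref{thm:c_2=0}(2) appear in your case list but are never placed; their principal minor vector is $(1,0,\dots,0,z)$, which already lies on $\sigma_{2}\nu_{n}\PP^{1}$, so (as the paper notes) they contribute no new component.
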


\begin{proof}[Proof of Theorem \ref{thm:nonsym}] 
The cases $n=3,4$ may be verified directly using elimination and symbolic algebra software such as \texttt{Macaulay2}. The degree and codimension of this determinantal ideal are well-known facts about determinantal varieties.

Suppose $n\geq 5$ and $A$ has symmetrized cycle-sums, and consider the different types of matrices that may be conjugate to $A$.
 First let us consider the case when $c_{1}(A)=0$. 
 Note that the case when $A$ is strictly upper-triangular is handled separately, and the ideal of relations among cycle-sums in that case is (up to radical) the line $\langle c_{2},\dots,c_{n}\rangle$. 

On the other hand, when $A$ is conjugate to an $n$-cycle matrix, we will see that its principal minors satisfy the equations for the general case when $c_{2}c_{3}\neq 0$, so the image of $n$-cycles is contained in the closure of the image of scalar multiples of the Toeplitz matrix $T_{n}(x)$. 

If $c_{2}\neq 0$ and $c_{3}=0$, then $A$ is conjugate to $T_{n}(1) = \mathbbm{1}^{\wedge}_{n}$, so this case is also included in the $c_{2}c_{3} \neq 0$ case.

Now suppose $c_{2}c_{3} \neq 0$ and $x\neq 0,\pm i$.  
 Then $A$ is conjugate to a scalar multiple of $T_{n}(x)$, where we consider $x$ to be an arbitrary parameter.  $T_{n}(x)$ has principal minors equal to \[
d_{s} (T_{n}(x))
=
\frac{(x^{2})^{s-1}+(-1)^{s}}{x^{s-2}(x^{2}+1)}
\]
Recall the result of diagonal modification:
\[ d_{k}(A-\lambda I) = \sum_{i=0}^{k} \binom{k}{i}(- \lambda)^{k-i}d_{i}(A)\]

So
\[ d_{k}(xT_{n}(x) + y I) = \sum_{i=0}^{k} \binom{k}{i}(y)^{k-i}d_{i}(xT_{n}(x))
=
\sum_{i=0}^{k} \binom{k}{i}(y)^{k-i} 
\frac{(x^{2})^{i}+(-1)^{i}x^{2}}{(x^{2}+1)}
\]
Clearing denominators and using the binomial theorem, we have,
\[
(x^{2}+1)d_{k}(xT_{n}(x) + y I) = 
\sum_{i=0}^{k} \binom{k}{i}(y)^{k-i} 
((x^{2})^{i}+(-1)^{i}x^{2})
\]
\[
=
\sum_{i=0}^{k} \binom{k}{i}(y)^{k-i} 
(x^{2})^{i} + \sum_{i=0}^{k} \binom{k}{i}(y)^{k-i} (-1)^{i}x^{2} 
\]
\[
= (x^{2} + y)^{k} + (y-1)^{k}x^{2}  .
\]

Finally, 
\[
d_{k}(xT_{n}(x) + y I)  = \frac{(x^{2} + y)^{k} + (y-1)^{k}x^{2}  }{1+x^{2}}
.\]

Therefore, the image of the principal minors map is
\[
v = \left[
\frac{1+x^{2}}{1+x^{2}}, 
\frac{(x^{2} + y) + (y-1)x^{2}  }{1+x^{2}}
,
\frac{(x^{2} + y)^{2} + (y-1)^{2}x^{2}  }{1+x^{2}},
\ldots,
\frac{(x^{2} + y)^{n} + (y-1)^{n}x^{2}  }{1+x^{2}}
\right]
.\]
Notice that $v = u + w$, where $u =\left [\frac{(x^{2}+y)^{k}}{1+x^{2}}\right]_{k=0}^{n}$ and $w =\left[ \frac{ (y-1)^{k}x^{2} }{1+x^{2}}\right]_{k=0}^{n}$.  For all $x\neq \pm i$ we have $u \in \nu_{n}\PP^{1}$, and for all $x\neq 0,\pm i$ we have $w \in \nu_{n}\PP^{1}$. Since $(x^{2}+y)$ and $(y-1)$ are independent for $x\neq \pm i$, we can parametrize an open subset of $\sigma_{2}\nu_{n}\PP^{1}$ as the vectors of principal minors of $xT_{n}(x) + y I$ as $x,y$ vary. 
The prime ideal of $\sigma_{2} \nu_{n}\PP^{1}$ is well-known to be generated by the $3\times 3$ minors of the (catalecticant) matrix
\begin{equation}\label{eq:Dmatrix}
\begin{pmatrix}
d_{0}       &  d_{1}       & d_{2} &\dots & d_{n-2}\\	
d_{1}       &  d_{2} &d_{3} &\dots &d_{n-1} \\
d_{2} & d_{3}& d_{4} &\dots & d_{n}\\
\end{pmatrix}.
\end{equation}

Now while it is tricky (because of the inhomogeneity) to take a limit of matrices of the form $xT_{n}(x) + yI_{n}$ to produce a matrix corresponding to a weighted $n$-cycle, the principal minors of a weighted $n$-cycle form the vector $[1,0,\ldots,0,z]$, for $z \in \CC$. This point is clearly an element of $\sigma_{2} \nu_{n}\PP^{1}$. So the vectors of principal minors of the weighted $n$-cycle matrix are zeros of the equations given by the $3\times 3$ minors of matrix \eqref{eq:Dmatrix}.
\end{proof}

A consequence of the proof of the previous result is the following geometric characterization of the variety of principal minors of SCS matrices:
\begin{cor} For $n\neq 4$, the scheme of cycle-sums of $n\times n$ SCS matrices corresponds to the chordal variety of the rational normal curve together with a non-trivial embedded scheme supported on the line $\mathcal{V}( c_{2},\dots,c_{n})$. As sets
\[ Z_{n}\cap  S^{n}\CC^{2} = 
(\sigma_{2}\nu_{n}\PP^{1} \cap  U_{c_{0}=1}) \cup \mathcal{V}( c_{2},\dots,c_{n}).\]
\end{cor}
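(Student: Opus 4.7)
The plan is to deduce the statement as a set-theoretic consequence of the matrix classification in Theorem~\ref{thm:Matrices}(3), together with the parametric computation already carried out in the proof of Theorem~\ref{thm:nonsym}. Since ${\mathcal S}_n\ltimes {\mathcal D}_n$-conjugation preserves all cycle-sums and diagonal modification $A\mapsto A-\lambda I_n$ affects only $c_1$ (see Proposition~\ref{prop:symmetry}), the image of an SCS matrix under the cycle-sum map depends only on its conjugacy class modulo the identity, so it suffices to analyze each of the canonical representatives listed in Theorem~\ref{thm:Matrices}(3).

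First I would establish the containment $(\sigma_{2}\nu_{n}\PP^{1} \cap U_{c_{0}=1}) \cup \mathcal{V}(c_{2},\dots,c_{n}) \subseteq Z_{n}\cap S^{n}\CC^{2}$. For the chordal piece, the calculation in Theorem~\ref{thm:nonsym} shows that the principal minors of $xT_n(x)+yI_n$ form a point $u+w$ on a chord of $\nu_{n}\PP^{1}$, where $u$ and $w$ are parametrized by the algebraically independent quantities $x^2+y$ and $y-1$; as $(x,y)$ ranges over a Zariski-open subset of $\CC^{*}\times\CC$, this sweeps out an open dense subset of $\sigma_{2}\nu_{n}\PP^{1}\cap U_{c_{0}=1}$, and the closure of the image under $\phi$ contains the full secant variety. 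For the line piece, the strictly upper-triangular representatives from Theorem~\ref{thm:Matrices}(3)(a)(i) give the origin $c_2=\cdots=c_n=0$, and their diagonal modifications by $\mu I_n$ realize every value of $c_1$ while keeping $c_k=0$ for $k\geq 2$, giving the full line.

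Next I would establish the reverse containment by showing that no other SCS matrix contributes a point outside the claimed union. After a diagonal modification reducing to $c_1=0$, Theorem~\ref{thm:Matrices}(3) leaves four possibilities: strictly upper triangular (line), $n$-cycle (principal minor vector $(1,0,\dots,0,z)$, which lies on $\nu_n\PP^1\subset \sigma_2\nu_n\PP^1$ since this vector manifestly annihilates the $3\times 3$ minors of the catalecticant matrix \eqref{eq:Dmatrix}), skew-symmetric with $c_2\neq 0,\ c_3=0$ (conjugate to $\mathbbm{1}_n^\wedge=T_n(1)$, hence inside the Toeplitz family for $n\neq 4$ by Corollary~\ref{cor:skew-param}), and the generic $\lambda T_n(x)$ case (which lies in $\sigma_2\nu_n\PP^1$ by the preceding paragraph). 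The excluded $n=4$ case is precisely the exceptional skew-symmetric representative of Theorem~\ref{thm:SCS-skew-symmetric}, whose image is not contained in $\sigma_2\nu_4\PP^1$.

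The main obstacle is verifying that the Toeplitz family truly sweeps out a \emph{dense} open subset of $\sigma_{2}\nu_{n}\PP^{1}$, rather than landing in a proper subvariety. This is handled by the decomposition $v=u+w$ with $u=\bigl[\tfrac{(x^2+y)^k}{1+x^2}\bigr]_{k=0}^n$ and $w=\bigl[\tfrac{(y-1)^k x^2}{1+x^2}\bigr]_{k=0}^n$: both lie on $\nu_n\PP^1$ for $x\neq 0,\pm i$, and the two Veronese parameters $x^2+y$ and $y-1$ are algebraically independent, so the image has the expected dimension $2$ of $\sigma_{2}\nu_{n}\PP^{1}$. The degenerate points $x\in\{0,\pm i\}$ and limits producing the $n$-cycle image $(1,0,\dots,0,z)$ are picked up in the Zariski closure. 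Combined with the previous steps, this yields exactly the claimed set-theoretic equality.
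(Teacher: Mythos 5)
Your overall route is the same as the paper's: reduce to the canonical representatives of Theorem~\ref{thm:Matrices}(3), use the parametrization of principal minors of the Toeplitz family from the proof of Theorem~\ref{thm:nonsym}, and finish with a closure argument. However, the step you yourself single out as the main obstacle --- density of the Toeplitz image in the chordal variety --- is not established by your argument, because the dimension count is wrong. The secant variety $\sigma_{2}\nu_{n}\PP^{1}$ of a curve is a threefold in $\PP^{n}$ (two points on the curve plus a point on the chord), so its affine slice $U_{d_{0}=1}$ is $3$-dimensional, not $2$-dimensional as you assert. The two-parameter family $xT_{n}(x)+yI_{n}$ has image of dimension at most $2$: writing its principal-minor vector as $t\,\nu_{n}(a)+(1-t)\,\nu_{n}(b)$ with $a=x^{2}+y$, $b=y-1$ and weight $t=\frac{1}{1+x^{2}}$, the weight is forced by the Veronese parameters via $a-b=1/t$, so the image is a proper $2$-dimensional subvariety of $\sigma_{2}\nu_{n}\PP^{1}\cap U_{d_{0}=1}$ and in particular not dense. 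As written you only obtain the containment of that surface (together with the line and the $n$-cycle points) in $Z_{n}\cap S^{n}\CC^{2}$, not the containment of the whole chordal variety, so the asserted set equality is not proved.

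The gap is easy to close with an ingredient you already cite: Theorem~\ref{thm:Matrices}(3)(c) gives the three-parameter family $\lambda T_{n}(x)+\mu I_{n}$ (equivalently, apply the homogeneity action $c_{k}\mapsto\lambda^{k}c_{k}$ of Proposition~\ref{prop:symmetry}, which preserves both $\sigma_{2}\nu_{n}\PP^{1}$ and the line $\mathcal{V}(c_{2},\dots,c_{n})$). Its principal minors are $d_{k}=\frac{(\mu+\lambda x)^{k}+x^{2}(\mu-\lambda/x)^{k}}{1+x^{2}}$, and now the two Veronese parameters $\mu+\lambda x$, $\mu-\lambda/x$ and the weight $\frac{1}{1+x^{2}}$ can be prescribed independently (solve $x^{2}=\frac{1-t}{t}$, then $\lambda$, then $\mu$), so the image dominates the $3$-dimensional slice; closedness of $Z_{n}$ (Lin--Sturmfels) then gives the forward containment. (The paper's own computation is also carried out with $xT_{n}(x)+yI_{n}$, but it is the extra scalar $\lambda$ that makes the density claim correct.) The remainder of your argument --- the reverse containment by cases, the observation that the $n$-cycle vector $(1,0,\dots,0,z)$ annihilates the catalecticant minors, and the role of the exceptional $4\times 4$ skew-symmetric representative --- matches the paper and is fine, provided you also note that the right-hand side is invariant under diagonal modification, which is what legitimizes your reduction to $c_{1}=0$.
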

\begin{remark}\label{rmk:structure}
The scheme structure of $\mathcal{V}( c_{2},\dots,c_{n})$ is quite complicated, and highly non-linear in principal minor coordinates. For example, when $n=5$
we found the relations among cycle-sums  and the relations among principal minors by performing a standard elimination procedure using \texttt{Macaulay2} \cite{M2}. In the cycle-sum case, we found that 
the ideal is generated by 1 cubic, 8 quartics and 1 quintic equation:
\[\begin{array}{l}
3c_3^3-4c_2c_3c_4+c_2^2c_5,\\
 6c_2c_3^2c_5-2c_2^2c_4c_5+c_4^2c_5-c_3c_5^2,\\
 6c_2^2c_3c_5+c_3c_4c_5-c_2c_5^2, 2c_2^3c_5+c_3^2c_5-c_2c_4c_5,\\
 6c_2c_3^2c_4-2c_2^2c_4^2+c_4^3-c_3c_4c_5,\\
 6c_2^2c_3c_4+c_3c_4^2-c_2c_4c_5, 2c_2^3c_4+c_3^2c_4-c_2c_4^2,\\
 6c_2^2c_3^2+c_3^2c_4-c_2c_3c_5, 6c_2^3c_3+c_2c_3c_4-c_2^2c_5,\\
 12c_2^5+2c_3^2c_4-3c_2c_4^2+c_2c_3c_5.
 \end{array}\]
This computation took 4644.28 seconds on a server with 24 1.6GHz processors and 141GB of RAM (not all processors or all the memory are used at all times in M2).
The primary decomposition consists of one component of codimension 2 and degree 6 given by
\[\langle
3c_3^3-4c_2c_3c_4+c_2^2c_5,6c_2c_3^2-2c_2^2c_4+c_4^2-c_3c_5,6c_2^2c_3+c_3c_4-c_2c_5,2c_2^3+c_3^2-c_2c
 _4\rangle\]
 and another component of codimension 4 and degree 11 given by
 \[ \langle c_5,c_2c_4,c_3^2,c_4^3,c_3c_4^2,c_2^3c_3,c_2^5 \rangle,
\quad \text{ with radical } \quad 
 \langle 
 c_{2},c_{3},c_{4},c_{5}
 \rangle
 .\]
Whereas the ideal of relations among symmetrized principal minors is much more complicated; it is generated by 20 quintics and 13 sextics,
and decomposes as
one component with codimension 2 and degree 6, generated by 4 cubics,
and another component with codimension 4 and degree 75, with 47 generators of degrees up to 8. \looseness=-1

\end{remark}

\section*{Acknowledgements}
Oeding thanks Bernd Sturmfels for introducing us to this question, and for his continued excellence in mentorship.  Oeding is also grateful for the partial support provided by the South Korean National Institute for Mathematical Sciences (NIMS) where some of this work was carried out.  The authors are also grateful to the developers of \texttt{Macaulay2}, where the initial examples in this paper were all computed.  The authors are also thankful for the careful review of two referees whose remarks improved the exposition of this work, and helped simplify the conditions on Thm.~1.2(1).
\newcommand{\arxiv}[1]{\href{http://arxiv.org/abs/#1}{{\tt arXiv:#1}}}

\def\Dbar{\leavevmode\lower.6ex\hbox to 0pt{\hskip-.23ex \accent"16\hss}D}
  \def\cprime{$'$}
\providecommand{\bysame}{\leavevmode\hbox to3em{\hrulefill}\thinspace}
\providecommand{\MR}{\relax\ifhmode\unskip\space\fi MR }
\providecommand{\MRhref}[2]{%
  \href{http://www.ams.org/mathscinet-getitem?mr=#1}{#2}
}
\providecommand{\href}[2]{#2}

\end{document}